\documentclass[12pt,a4paper,reqno]{amsart}
\usepackage[utf8]{inputenc}
\usepackage{mathrsfs, amssymb, amsmath, amsfonts, enumitem}
\usepackage{graphicx}
\usepackage[english]{babel}
\title[Optimal Frac. $p$-Hardy Inequality]{Optimal Hardy Inequalities for Fractional $p$-Laplacians on the Integers}

\author{Florian Fischer}
\address{Florian Fischer, Institute for Applied Mathematics, University of Bonn, Endenicher Allee 60, 53115 Bonn, Germany}
\email{fischer@iam.uni-bonn.de, florian.fischer.math@gmail.com}

\author{Marius Nietschmann}
\address{Marius Nietschmann, Sorbonne Université, Université Paris Cité, CNRS, IMJ-PRG, 75005 Paris, France}
\email{nietschmann@imj-prg.fr}

\usepackage[style=alphabetic,maxnames=5,minnames=2,abbreviate=false,date=long,url=true,isbn=false, doi=true,backend=biber, backref,giveninits=true, backrefstyle=none]{biblatex}
\usepackage{csquotes}
\AtBeginBibliography{\tiny}
\DefineBibliographyStrings{english}{%
	backrefpage = {\hspace{-0.4em}}, 
	backrefpages = {\hspace{-0.4em}} 
}

\usepackage[colorlinks=true,citecolor=magenta,linkcolor=green]{hyperref}

\newtheorem{theorem}{Theorem}[section]
\newtheorem{lemma}[theorem]{Lemma}
\newtheorem{proposition}[theorem]{Proposition}
\newtheorem{corollary}[theorem]{Corollary}

\theoremstyle{definition}

\newtheorem{remark}[theorem]{Remark}

\numberwithin{equation}{section}

\newcommand{\abs}[1]{\left\lvert #1\right\rvert} 
\newcommand{\set}[1]{\left\{ #1\right\} }

\newcommand{\p}[1]{\left( #1 \right)^{\langle p-1 \rangle}}
\newcommand{\sse}{\subseteq}

\renewcommand{\epsilon}{\varepsilon}
\newcommand{\ep}{\varepsilon}
\renewcommand{\phi}{\varphi}
\newcommand{\NN}{\mathbb{N}}
\newcommand{\ZZ}{\mathbb{Z}}

\newcommand{\RR}{\mathbb{R}}
\newcommand{\CC}{\mathbb{C}}
\newcommand{\dd}{\mathrm{d}}
\DeclareMathOperator{\sgn}{sgn}

\DeclareMathOperator{\supp}{supp}
\DeclareMathOperator{\Div}{div}
\newcommand{\FF}{\mathcal{F}}

\newcommand{\E}{\mathcal{E}}
\newcommand{\EE}{\mathbf{E}}
\newcommand{\LL}{\mathbf{L}}

\newcommand{\Hmm}[1]{\leavevmode{\marginpar{\tiny%
			$\hbox to 0mm{\hspace*{-0.5mm}$\leftarrow$\hss}%
			\vcenter{\vrule depth 0.1mm height 0.1mm width \the\marginparwidth}%
			\hbox to 0mm{\hss$\rightarrow$\hspace*{-0.5mm}}$\\\relax\raggedright #1}}}
\begin{document}
	\begin{abstract}
		We obtain optimal Hardy inequalities for fractional powers of the $p$-Laplacian $\Delta_p^\sigma$ on $\ZZ$, $p\in (1,\infty)$, $\sigma \in (0, 1/p)$. Asymptotically, the optimal weights behave like $\abs{x}^{-p\sigma}$ as $\abs{x}\to \infty$. We show a similar result for the Riesz fractional $p$-Laplacian.
		
		In an appendix, we show that for general $p$-Laplace operators on locally summable graphs, $p$-null-criticality implies $p$-optimality at infinity.
		\\
		\\[4mm]
		\noindent  2020  \! {\em Mathematics  Subject  Classification.}
		Primary   26D15; Secondary 26A33, 35J92
		\\[4mm]
		\noindent {\em Keywords.} discrete fractional $p$-Laplacian, $p$-Hardy inequality
	\end{abstract}
	
	\maketitle
	
	\section{Introduction}
	It is well-known that Brownian motion is \textit{transient} on $\RR^d$ if and only if $d\geq 3$. That is, a Brownian particle does not return almost surely and  escapes to infinity with positive probability. It was observed some decades ago (see e.g. \cite{Gri} for historical details) that this transience on $\RR^d$ is equivalent to the existence of a Hardy inequality for the associated Dirichlet energy on $C_c^{\infty}(\RR^d)$, i.e., $\Delta \geq W$ in the sense of quadratic forms for some non-trivial non-negative function $W$, and $\Delta$ is the positive Dirichlet Laplacian. Here, we want to treat the corresponding quasi-linear discrete setting.
	
	A century ago, Landau \cite{Landau} showed the following improvement of an inequality by Hardy \cite{Hardy20}, \cite[Theorem 326]{HLP}:
	
	\[ \sum_{n=0}^{\infty}\abs{\phi(n+1)-\phi(n)}^p \geq \left(\frac{p-1}{p}\right)^p \sum_{n=1}^{\infty}\frac{\abs{\phi(n)}^p}{n^p}, \]
	where $\phi\in C_c(\NN)$, $\phi(0)=0$, $p>1$. It was observed in \cite{FKP} that although the $p$-Hardy constant $C_p:=\left(\frac{p-1}{p}\right)^p$ is not improvable, the whole $p$-Hardy weight $w_H(n):=C_p/n^p$ is not optimal, and the optimal improvement is given by $w_H$ plus lower-order terms. The basic idea in \cite{FKP} has its origins in its counterpart in the continuum in \cite{DP16} and was generalized to other graphs in \cite{F:Thesis, F} building upon the linear ($p=2$) theory \cite{DFP, KPPop, KPPN}. 
		
	However, for general $p$-Laplacians on graphs, only results about optimal $p$-Hardy weights are known in the setting of locally finite graphs with proper Agmon ground states of bounded oscillation. Here, we give for the first time an example of a non-locally finite graph admitting an optimal $p$-Hardy weight for $p\neq 2$, see Theorem~\ref{thm:main}. Remarkably, our corresponding Agmon ground state is proper but not of bounded oscillation.
	
	Our model is a special complete graph over $\ZZ$: the graph associated with the fractional (power of the) $p$-Laplacian. This operator has been studied for a long time in the continuum (see e.g. \cite{dTGCV} and references therein) but also has attracted growing interest in the discrete setting, see e.g. \cite{DKP, Wang,Yu,  ZLY}. In the linear case, that is $p=2$, Keller and the second author showed in \cite{KN} the optimality of a Hardy weight for $\Delta_2^{\sigma}$, $\sigma \in (0,1/2)$, obtained previously in \cite{CR}, see also \cite{DFF, GKS, SW26} for similar results on $\NN$.
	
	Our strategy is motivated by the results  for the linear case in \cite{KN}: We propose a candidate for the Agmon ground state, and show that it is indeed a suitable candidate to obtain a critical Hardy-type inequality by constructing a null-sequence. Since the obtained weight is in general not positive everywhere, we also propose a modification yielding a strictly positive Hardy weight. Heuristically, a reasonable candidate for an Agmon ground state for a $p$-Laplacian is often $G^{(p-1)/p}$, where $G$ is the Green's function. We view our candidate as closely related to the Green's function of our fractional $p$-Laplacian where we take the root not as a power but in a certain fractional sense.
	
	There is a closely related restricted (or Riesz) fractional $p$-Laplacian which coincides with the fractional power of the $p$-Laplacian on $\RR^d$, \cite{dTGCV}.  However, apart from the Euclidean setting these operators are different. Recently, Dyda showed the existence of a corresponding $p$-Hardy inequality on $\ZZ^d$, \cite{Dyda}. This corresponds to a graph where the edge weights are chosen to be $\abs{x-y}^{-d-p\sigma}$ for all $x,y\in \ZZ^d$. By adapting our previous approach, we briefly show in Theorem~\ref{thm:rfl} how to obtain optimal weights on $\ZZ$ associated with the restricted fractional $p$-Laplacian. 
	
	The linear fractional case on $\ZZ$ has been studied much more intensively; we mention here a few recent contributions \cite{BGKS,CR, DFF, FBRR,GKS, HKP, HKPLattice, HKPFractional, ZYY}. For applications of Hardy inequalities we refer to the references in \cite{BDK, BEL, OWR, D99, FS}. We only want to mention that the existence of a Hardy inequality is equivalent to the hyperbolicity of the graph \cite{AFS}, and in a probabilistic setting, to the transience of the corresponding random walk \cite{KLW, Soardi, Woess}.
	
	The paper is organized as follows: In the next section, we introduce the setting, state the main results, Theorem~\ref{thm:main} and Theorem~\ref{thm:strictly positive weight}, recall the ground state representation formula and some properties of $\Gamma$-functions. In Section~\ref{s:sign}, we study the sign of the Laplacian applied to our candidate for a ground state. Thereafter, in Section~\ref{s:optimal}, we give a proof of Theorem~\ref{thm:main}. Section~\ref{s:constant} is dedicated to the $p$-Hardy constant. In Section~\ref{s:positive weight}, we address the question of constructing a strictly positive weight via a proof of Theorem~\ref{thm:strictly positive weight}. In an appendix, we first show that $p$-null-criticality implies $p$-optimality at infinity, and then the associated theorem for the restricted fractional $p$-Laplacian.
	
	\section{Setting the Scene}\label{s:setting}
	The fractional $p$-Laplacian on $\ZZ$ is a special locally summable but non-locally finite graph. Since in the appendix we show a statement for $p$-Laplace operators on general graphs we introduce them here first and thereafter in Subsection~\ref{ss:frac}, we turn to the fractional setting on~$\ZZ$.
	
	\subsection{Discrete $p$-Laplacians on Graphs}\label{s:graphs}
	
	Let $X$ be countable. Elements of $X$ are called \emph{vertices}. Let $m\colon X\to (0,\infty)$. Then $(X,m)$ can be interpreted as a discrete measure space. The function $b\colon X\times X \to [0,\infty)$ is called a \emph{graph} over $(X,m)$ if $b$ is symmetric, vanishes on the diagonal and is locally summable, i.e.,
	\[ \deg(x):=\sum_{y\in X}b(x,y)< \infty, \qquad x\in X. \]
	Moreover, we assume that the graph is connected, that is, for any two vertices $x,y\in X$ there are vertices ${x_0,\ldots ,x_n \in X}$, such that $x=x_0$, $y=x_n$ and $b(x_{i-1}, x_i)> 0$ for all $i\in\set{1,\ldots, n}$. A graph is \emph{locally finite} if $\# \set{y\in X: b(x,y)>0}< \infty$ for all $x\in X$.
	
	By $C(X)$ and $C_c(X)$, we denote the spaces of real-valued and of finitely supported functions on $X$, respectively. By $1_K$ we denote the characteristic function which is $1$ on $K\sse X$ and $0$ elsewhere. For singletons~$\set{x}$, we do not write the braces, i.e., $1_x=1_{\set{x}}$.  We also define the linear difference operator $\nabla { \colon C(X)\to C(X\times X)}$ by the formula
	\[\nabla_{x,y}f=f(x)-f(y), \qquad x,y\in X, f\in C(X).\]
	
	Let $p\in[1,\infty)$ and let the \emph{formal space} $\FF_p=\FF_{p,b}$ be given by
	\begin{align*}
		\FF_p := \{ f\in C(X): \sum_{y\in X} b(x,y)\abs{\nabla_{x,y}f}^{p-1} < \infty  \mbox{ for all } x\in X  \}.
	\end{align*}
	If the graph is locally finite, we have $\FF_p=C(X)$. 
	
	The  \emph{(pseudo) $p$-Laplace operator} $\Delta_p\colon \FF_p\to C(X)$ is given by
	\begin{align*}
		\Delta_pf(x):=\frac{1}{m(x)} \sum_{y\in X} b(x,y)\p{\nabla_{x,y}f},\qquad x\in X.
	\end{align*}
	 Here, we used the French power notation, i.e.,
	 \[ \p{t}:= |t|^{p-1} \sgn (t), \qquad t\in \RR \]
	 with the sign function $\sgn\colon \RR\to \set{-1,0,1}$ such that $\sgn(t)=1$ for all $t > 0$, $\sgn(t)=-1$ for all $t< 0$, and $\sgn(0)=0$.
	 
	 The function $u\in\FF_p$ is $\Delta_p$-superharmonic on $X$ if $\Delta_p u \geq 0$ on $X$. Whenever the operator is clear from the context we will only speak of a superharmonic function. Furthermore, $u$ is subharmonic if $-u$ is superharmonic, and harmonic if it is super- and subharmonic at the same time.
	
	The \emph{$p$-energy functional} $\E_p\colon C_c(X)\to \RR$  
	is defined via 
	\[ \E_p(f):=\frac{1}{2}\sum_{x,y\in X} b(x,y)\abs{\nabla_{x,y}f}^{p}.\]
	The following Green's formula holds: for all $\phi\in C_c(X)$, \[\E_p(\phi)=\sum_{x\in X}\Delta_p\phi(x)\phi(x)m(x).\]
	
	\begin{remark}[$p$-Laplacians]\label{r:pLapl}
		\begin{enumerate}
			\item This discrete $p$-Laplacian has an analogue in the continuum -- the so-called \emph{pseudo $p$-Laplacian} $\tilde{\Delta}_p$, see e.g. \cite{BK}. On domains in $\RR^d$ it acts on smooth enough functions $u$ via \[\tilde{\Delta}_pu = - \sum_{i=1}^d\frac{\partial}{\partial x_i}\left(\abs{\frac{\partial}{\partial x_i}}^{p-2}\frac{\partial u}{\partial x_i}\right).\]
			\item\label{r:pLaplGri} Another generalization of the linear (that is $p=2$) Laplacian is motivated by a Green's formula	from the $p$-energy
			\[ \sum_{x\in X} \frac 1{2}\abs{\sum_{y\in X}b(x,y)(f(x)-f(y))^2}^{p/2},\]
			where the inner summation represents the square of the norm of the ``gradient''  of $f $ at $x$, see e.g. \cite{GLY}. While this description resembles the classical $p$-Laplacian $-\Div(\abs{\nabla}^{p-2}\nabla)$ in the continuum, it will not be considered here as the ``pseudo'' variant of the linear Laplacian has been studied much more on graphs and goes back at least to \cite{Y}.
		\end{enumerate}
	\end{remark}
		
	For any $w\in C(X)$, we use the following notation (whenever the sum converges absolutely):
	\[ w_p(f):= \sum_{x\in X}w(x)\abs{f(x)}^{p}m(x).\]
	In other words, $w_p$ is the canonical functional associated with the function $w$.
	
	We say that $w$ is a \emph{($p$-) Hardy-type weight} if 
	\[ \E_p(\phi)\geq w_p(\phi), \qquad \phi \in C_c(X).\]
	Since $\E_p \geq 0$, the trivial function $w=0$ is always a Hardy-type weight and it is therefore desirable to find those weights which are positive somewhere. A positive Hardy-type weight is called a \emph{Hardy weight}. 
	
	A Hardy-type weight $w$ is called \emph{$p$-critical} if there is no other Hardy-type weight $\tilde{w}$ such that $\tilde{w} \gneq w$ on $X$. This is equivalent to the existence of an Agmon ground state, that is, the unique (up to multiplicative constants) positive ($\Delta_p -w$)-superharmonic function $u$. In fact $u$ is ($\Delta_p -w$)-harmonic, i.e., $\Delta_pu=w u^{p-1}$, see \cite{F:GSR} for a proof in the language of non-negative $p$-Schrödinger operators. 
	
	A $p$-critical Hardy-type weight $w$ is \emph{$p$-null-critical} if the corresponding Agmon ground state $u$ is not in $\ell^{p}(X,\abs{w}m)$, i.e., $w_p(u)=\infty$ formally. Thus, equality can only occur if both sides are $\infty$ or $0$, and therefore it is also sometimes called non-attainability \cite{GKS}.
	
	A Hardy-type weight is called \emph{optimal} if it is $p$-critical and $p$-null-critical. Historically, a third requirement for optimality is usually stated: $p$-optimality at infinity. We show in the appendix that $p$-null-criticality implies $p$-optimality at infinity.
	
	\subsection{Fractional $p$-Laplacians on $\ZZ$}\label{ss:frac}
	The linear Laplacian over $\ZZ$ is given by
		\[ \Delta_2f(x)= (f(x)-f(x+1))+ (f(x)-f(x-1)),\]
	for $x\in \ZZ, f\in C(\ZZ)$. This corresponds to setting $p=2, m=1$ and $b(x,y)=1$ if $\abs{x-y}=1$ and $b(x,y)=0$ else, $x,y\in \ZZ$, in Section~\ref{s:graphs}. 
	
	Let $p\in (1,\infty)$ and $\sigma\in (0,\frac2p)$. The \emph{fractional $p$-Laplacian} $\Delta_p^{\sigma}$ is then defined by
		\[ \Delta_p^\sigma f(x):= \frac{1}{\abs{\Gamma\left(\frac{-p\sigma}{2}\right)}}\int_0^\infty e^{-t\Delta_2}(\p{\nabla_{x \cdot}f})(x) \frac{\dd t}{t^{1+\frac{p\sigma}{2}}} \]
	for $f\in C_c(\ZZ)$.	Here, $e^{-t\Delta_2}, t\geq 0$, is the semigroup of $\Delta_2$ and $\Gamma$ is the Gamma function. For $p=2$, it is the classical fractional Laplacian on~$\ZZ$, see e.g. \cite{KN}. This operator is also called the fractional power of the $p$-Laplacian or the spectral fractional $p$-Laplacian.
	
	We show that it can be interpreted as a discrete graph Laplacian. Let $p_t(x,y)= e^{-\Delta_2}1_y(x)$, $x,y\in\ZZ$, be the heat kernel of $\Delta_2$, and set for all $\beta \in (-\frac1p,\frac2p)$, $\beta\neq0$, 
	
	\[\kappa_{p,\beta}(x,y):=\frac{1}{\abs{\Gamma\left(\frac{-p\beta}{2}\right)}}\int_0^\infty p_t(x,y)\frac{\dd t}{t^{1+\frac{p\beta}{2}}},\]
	for $\abs{x-y}>\frac{p\beta}2$ as well as $\kappa_{p,\beta}(x,x)=0$ for $\beta > 0$, and $\kappa_{p,0}=1_0$. From symmetry properties of the heat kernel, we infer	
	\[ \kappa_{p,\beta}(x,y)=\kappa_{p,\beta}(x-y,0)=:\kappa_{p,\beta}(x-y).\]
	
	By \cite[Lemma~9.2]{CRSTV}, we have the following asymptotic behavior:
	\begin{align}\label{eq:kappa}
		\kappa_{p,\beta}(x)&= \frac{4^{\frac{p\beta}{2}} \Gamma(\frac{1+p\beta}{2})}{\sqrt{\pi}\abs{\Gamma(- \frac{p\beta}{2})}}\cdot \frac{\Gamma(\abs{x}-\frac{p\beta}{2})}{\Gamma (\abs{x}+1+\frac{p\beta}{2})} \\
		&= c_{p,\beta} \cdot \abs{x}^{-1-p\beta}+ O\left(\abs{x}^{-2-p\beta} \right), \quad |x|\to \infty.\label{eq:kappaAsymp}
	\end{align}
	with 
	\begin{equation} \label{eq:kappa constant}
		c_{p,\beta} := \frac{4^{\frac{p\beta}{2}} \Gamma(\frac{1+p\beta}{2})}{\sqrt{\pi}\abs{\Gamma(- \frac{p\beta}{2})}}.
	\end{equation}
	
	Let us now consider the graph $\kappa_{p,\sigma}$ over $(\ZZ, 1)$. This is a non-locally finite complete graph which is locally summable since $p\sigma >0$. Thus, for all $f\in C_c(\ZZ)$, we have
	\begin{align*}
		\sum_{y\in \ZZ}\kappa_{p,\sigma}(x,y) &\p{\nabla_{x,y}f}\\
		&= \sum_{y\in \ZZ}\left(\frac{1}{\abs{\Gamma\left(\frac{-p\sigma}{2}\right)}}\int_0^\infty p_t(x,y)\frac{\dd t}{t^{1+\frac{p\sigma}{2}}}\right) \p{\nabla_{x,y}f}\\
		&= \frac{1}{\abs{\Gamma\left(\frac{-p\sigma}{2}\right)}} \int_0^\infty \sum_{y\in \ZZ} p_t(x,y) \p{\nabla_{x,y}f}\frac{\dd t}{t^{1+\frac{p\sigma}{2}}} \\
		&= \frac{1}{\abs{\Gamma\left(\frac{-p\sigma}{2}\right)}}\int_0^\infty e^{-t\Delta_2}(\p{\nabla_{x \cdot}f})(x) \frac{\dd t}{t^{1+\frac{p\sigma}{2}}} \\
		&= \Delta_p^\sigma f(x),
	\end{align*}
	see \cite[Section~2]{KN} for analogous computations in the case $p=2$. We extend $\Delta_p^\sigma$ to functions in $\FF_p=\FF_{p,\kappa_{p,\sigma}}$ via the graph $p$-Laplacian description.
	
	\begin{remark}[fractional $p$-Laplacians]
		\begin{enumerate}\label{r:fracpLapl}
			\item In \cite{CG}, an alternative generalization of the action of a fractional Laplacian is given via
			\[ \int_0^\infty \left(1- e^{-t\Delta_p}\right)f(x)\frac{\dd t}{t^{1+\sigma}}, \qquad x\in \ZZ, \]
			where $e^{-t\Delta_p}$ is the non-linear semigroup associated with the $p$-Laplacian, see also \cite{M}. This is also sometimes called a fractional $p$-Laplacian but will not be considered here.
			\item Following the approach in the previous Remark \ref{r:pLapl}~\eqref{r:pLaplGri} via a different $p$-Laplacian, an alternative fractional $p$-Laplacian on locally finite graphs is defined in \cite{ZLY}.
			\item Moreover, there is another very popular fractional $p$-Laplacian, sometimes called restricted (or integral or Riesz) fractional $p$-Laplacian, which we will discuss in detail in the appendix. This operator can be obtained by choosing $b(x,y)=\abs{x-y}^{-1-p\sigma}$. It is well known that our spectral fractional $p$-Laplacian and this restricted one coincide if the underlying space is Euclidean \cite{dTGCV}.
		\end{enumerate}		
	\end{remark}
	
	\subsection{Main Result}
	We have the following result, which is the direct generalization of \cite[Theorem~5]{KN}.
	
	\begin{theorem}\label{thm:main}
		Let $p,q \in (1,\infty)$, $\sigma\in (0,\frac1p)$, $\alpha\in(0,\frac1q)$. Let $\kappa_{p,\sigma}$ be a graph over $(\ZZ,1)$. Set 
		\[ w:= \frac{\Delta_p^\sigma \kappa_{q,-\alpha}}{\kappa_{q,-\alpha}^{p-1}}. \]
			\begin{enumerate}[label = ({\alph*})]
			
			\item\label{thm:main_a} If $p(\sigma+1-q\alpha)\geq1$, i.e., $\alpha\leq\frac{p-1+p\sigma}{pq}$, then $w$ is $p$-critical.
			\item\label{thm:main_b} If  $p(\sigma+1-q\alpha)\leq1$, i.e., $\alpha\geq\frac{p-1+p\sigma}{pq}$, then $\kappa_{q,-\alpha}\notin \ell^p(\ZZ,\abs{w})$.
		\end{enumerate}
	In particular, if $p(\sigma+1-q\alpha)=1$, then $w$ is an optimal Hardy-type weight.
	
	Moreover, $w(0)>0$. If $\alpha>\frac{p-2+p\sigma}{(p-1)q}$, then $w>0$ outside a compact set and $w(x)\asymp |x|^{-p\sigma}$ as $|x|\to\infty$. 
	\end{theorem}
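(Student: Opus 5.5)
We take $u:=\kappa_{q,-\alpha}>0$ and use that $\Delta_p^\sigma u = w u^{p-1}$ holds by definition of $w$, so $u$ is a positive $(\Delta_p^\sigma-w)$-harmonic function. First we check that $u\in\FF_p$. By \eqref{eq:kappaAsymp} with $\beta=-\alpha$ we have $u(x)\asymp |x|^{-1+q\alpha}$, and $-1+q\alpha\in(-1,0)$, so $\nabla_{x,y}u$ is bounded. Since $\kappa_{p,\sigma}$ is summable, $u\in\FF_p$.

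The ground state representation recalled in Section~\ref{s:setting} then gives $\E_p-w_p\ge0$ on $C_c(\ZZ)$, with equality controlled by the simplified energy $h_u(\phi)$ of $\phi=\psi u$. So $w$ is a Hardy-type weight. To prove \ref{thm:main_a} we construct a null-sequence, i.e.\ $\phi_n\in C_c(\ZZ)$ with $\phi_n\to u$ pointwise and $\E_p(\phi_n)-w_p(\phi_n)\to0$. Criticality then follows from the standard characterization of criticality via null-sequences.

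\emph{Null-sequence.} We take $\phi_n=\psi_n u$ with logarithmic cutoffs
\[
\psi_n(x)=\min\{1,\max\{0,\,2-\log|x|/\log n\}\}.
\]
By the simplified energy, $h(\psi_n u)\asymp \sum_{x,y}\kappa_{p,\sigma}(x,y)\,(u(x)u(y))^{p/2}\,|\nabla_{x,y}\psi_n|^2\,(\cdots)^{(p-2)/2}$ (two-sided estimate, case distinction $p\ge2$ versus $p<2$). We split this sum into pairs with $|x-y|\le|x|/2$ and pairs with $|x-y|>|x|/2$.
\begin{itemize}
\item On the near-diagonal part, $|\nabla\psi_n|\lesssim |x-y|/(|x|\log n)$ on the annulus $n\le|x|\le n^2$. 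Summing $\kappa_{p,\sigma}(x-y)\,|x-y|^p$ over $y$ gives about $|x|^{p-1-p\sigma}$, since $p\sigma<1<p$.
\item The far part is bounded by tail sums of order $|x|^{-p\sigma}$.
\end{itemize}
Combined with $u^p\asymp|x|^{p(q\alpha-1)}$, the near-diagonal part yields the quantity
\[
(\log n)^{-p}\sum_{n\le|x|\le n^2}|x|^{-p\sigma+p(q\alpha-1)}.
\]
The exponent is $-1$ exactly when $p(\sigma+1-q\alpha)=1$; in that case the sum is about $\log n$, giving $(\log n)^{1-p}\to0$. For a strictly larger exponent it is even smaller.

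The main obstacle is the non-local cross terms: $x$ in the annulus but $y$ far away, including $y$ near $0$ or $|y|\gg n^2$, where $u(y)$ is much larger or smaller than $u(x)$. These must be controlled using that $u$ is not of bounded oscillation. We would bound them by $\sum_x u(x)^p|x|^{-p\sigma}$ over $|x|\sim n$ and $|x|\sim n^2$ plus interactions with the origin, which decay because $p\sigma<1$, and verify that they remain $o(1)$ under $p(\sigma+1-q\alpha)\ge1$. If the logarithmic cutoff does not suffice for strict inequality, plain power cutoffs $\psi_n(x)=(1-|x|/n)_+$ should already work there.

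For \ref{thm:main_b} and the asymptotics of $w$, we write
\[
\Delta_p^\sigma u(x)=\sum_y\kappa_{p,\sigma}(x-y)\,\p{u(x)-u(y)}.
\]
For large $|x|$ we compare with the continuum: $u\approx c|x|^{q\alpha-1}$ and $\kappa_{p,\sigma}\approx c|x-y|^{-1-p\sigma}$. The sum is dominated by the Riesz-type $p$-fractional Laplacian of $|x|^{\gamma}$, $\gamma=q\alpha-1$, which is $C(\gamma)|x|^{\gamma(p-1)-p\sigma}$, plus the contribution near $y=0$, of order $|x|^{-1-p\sigma}u(x)^{p-1}\cdot\sum_{|y|\lesssim|x|}u(y)^{p-1}$-type terms. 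Hence $|w(x)|\lesssim|x|^{-p\sigma}$, and $w(x)\asymp|x|^{-p\sigma}$ with positive sign when $C(\gamma)>0$. The threshold $\alpha>\frac{p-2+p\sigma}{(p-1)q}$, i.e.\ $(p-1)\gamma>-p\sigma-\dots$, is where the far-field contribution dominates with the right sign; this sign computation belongs in Section~\ref{s:sign}.

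Then \ref{thm:main_b} follows: $|w|u^p\gtrsim|x|^{-p\sigma+p(q\alpha-1)}$, which is not summable when $p(\sigma+1-q\alpha)\le1$. Where $w$ may change sign, we use the lower bound on $|w|$ coming from the dominant far-field term. Finally, $w(0)>0$ because $u$ attains its strict maximum at $0$ ($u$ decreasing in $|x|$ since $q\alpha<1$), so every term $\p{u(0)-u(y)}$ is positive.
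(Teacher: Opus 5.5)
Your route is the paper's: a logarithmic-cutoff null-sequence estimated via the simplified energy for \ref{thm:main_a}, the continuum asymptotics of Proposition~\ref{prop:optimal constant} for \ref{thm:main_b}, and the strict maximum of $\kappa_{q,-\alpha}$ at $0$ for $w(0)>0$. However, the steps you defer are where the proof lies, and in \ref{thm:main_a} the bounds you sketch do not close at $p(\sigma+1-q\alpha)=1$, which is the case needed for optimality. At that exponent $u(x)^p|x|^{-p\sigma}\asymp|x|^{-1}$. So bounding the far part by $u(x)^p$ times tail sums of order $|x|^{-p\sigma}$, as written, gives $\asymp\log n$ after summing over $n\le|x|\le n^2$. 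Your bound for the cross terms, $\sum u(x)^p|x|^{-p\sigma}$ over the shells $|x|\sim n$ and $|x|\sim n^2$, is $\asymp 1$. Neither is $o(1)$. The loss comes from discarding $|\nabla_{x,y}\psi_n|$ on non-local pairs. Since $\psi_n$ is $(\log n)^{-1}$-Lipschitz as a function of $\log|x|$, you have $|\nabla_{x,y}\psi_n|\le\log(|x|/|y|)/\log n$ for all $0<|y|<|x|$, near or far. Substituting $t=|x|/|y|$ and using $\abs{x-y}\ge\abs{\abs{x}-\abs{y}}$ yields
\[ \E_{p,u,1}(\psi_n)\lesssim\frac{1}{\log^p n}\sum_{0<|y|<n^2}|y|^{-p(\sigma+1-q\alpha)}\int_1^\infty(t-1)^{-1-p\sigma}\,t^{-\frac p2(1-q\alpha)}\log^p t\,dt\lesssim(\log n)^{1-p}. \]
The integral is finite since $\log^pt\asymp(t-1)^p$ near $1$ and $p\sigma+\frac p2(1-q\alpha)>0$; this is the paper's estimate of $S_1,S_2$. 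Moreover, for $p\ge2$, Corollary~\ref{cor:GSR} also requires the term $\E_{p,u,2}$ containing $|\nabla_{x,y}u|^{p-2}$, which you never estimate. This is where the unbounded oscillation of $u$ enters, as an obstruction rather than a tool: $|\nabla_{x,y}u|/\sqrt{u(x)u(y)}\asymp(|y|/|x|)^{(1-q\alpha)/2}$ for $|x|\ll|y|$. The paper controls it with $|\nabla_{x,y}u|\le u(y)$ for $x>cy$ and the mean value theorem for $y<x<cy$ (the terms $T_{1,1},\dots,T_{2,2}$). It gains only $(\log n)^{-2}$, which still beats the $\log n$ from the $y$-sum. (Minor: $\sum_{|z|\le|x|/2}\kappa_{p,\sigma}(z)|z|^p\asymp|x|^{p-p\sigma}$, not $|x|^{p-1-p\sigma}$, though your resulting near-diagonal expression is correct.)

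In \ref{thm:main_b} you need $|w(x)|\gtrsim|x|^{-p\sigma}$, i.e.\ that the limiting constant is non-zero, and you leave its sign open. The paper identifies it as $C_\infty=c_{p,\sigma}I$ with
\[ I=\int_0^1\left((1-t)^{-1-p\sigma}+(1+t)^{-1-p\sigma}\right)\left(t^{p\sigma+(p-1)(1-q\alpha)-1}-1\right)\left(t^{-1+q\alpha}-1\right)^{p-1}dt, \]
which is positive if and only if $\alpha>\frac{p-2+p\sigma}{(p-1)q}$. You must then check that the hypothesis of \ref{thm:main_b} implies this threshold, which holds since $\frac{p-1+p\sigma}{pq}-\frac{p-2+p\sigma}{(p-1)q}=\frac{1-p\sigma}{p(p-1)q}>0$. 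The same sign computation is what yields the final assertions ($w>0$ off a compact set and $w\asymp|x|^{-p\sigma}$). Also, passing from the sum to the continuum integral needs $\alpha>\frac{p-2}{(p-1)q}$, i.e.\ integrability of $|1-t|^{(p-1)(q\alpha-1)}$ at $t=1$. Your unconditional claim $|w(x)|\lesssim|x|^{-p\sigma}$ is false without it. If $(p-1)(1-q\alpha)>1$ (possible for $p>2$), the pairs with $y$ near $0$ contribute $\asymp-|x|^{-1-p\sigma}$ to $\Delta_p^\sigma u(x)$. Hence $w(x)\asymp-|x|^{(p-1)(1-q\alpha)-1-p\sigma}$, which is negative and much larger in size than $|x|^{-p\sigma}$.
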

	
	A key ingredient is the correct asymptotic behaviour of $w$ and $\kappa_{q,-\alpha}$. This will be used in the appendix to show an alternative optimality result for the restricted fractional $p$-Laplacian.
	
	Another key ingredient is the application of the ground state representation formula, which requires $w$ to be given by a quotient as in the theorem. In the case $p=q=2$, one has the powerful identity
	\[ \Delta_2^\sigma \kappa_{2,-\alpha}=\kappa_{2,\sigma-\alpha}, \]
	which is proven via the spectral theorem and crucial for establishing the positivity of the associated Hardy weight for $\sigma\leq\alpha$. No analogous identity is available in the non-linear case; in Section~\ref{s:sign} we show some partial results on the sign of $\Delta_p^\sigma\kappa_{q,-\alpha}$. We will see in particular that the Hardy-type weight is in general not a Hardy weight. 
	
	Nevertheless, it would be very interesting to determine whether a similar formula could also hold in the non-linear case.
	
	Next, we formulate our second main result addressing the question of a strictly positive Hardy weight.
	
	\begin{theorem}\label{thm:strictly positive weight}
		Let $p,q>1$, $\sigma\in(0,\frac1p)$ and $\alpha=\frac{p-1+p\sigma}{pq}$. There exists $\ep>0$ such that $u=\kappa_{q,-\alpha}\wedge\ep$ is $\Delta_p^\sigma$-superharmonic. In particular, 
			\[ w := \frac{\Delta_p^\sigma(\kappa_{q,-\alpha}\wedge\ep)}{(\kappa_{q,-\alpha}\wedge\ep)^{p-1}} \] 
		is an optimal Hardy weight. Moreover, $w(x)\asymp|x|^{-p\sigma}$ as $|x|\to\infty$. 
	\end{theorem}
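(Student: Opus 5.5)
Set $v:=\kappa_{q,-\alpha}$ with $\alpha=\frac{p-1+p\sigma}{pq}$ and $u:=v\wedge\ep$. By \eqref{eq:kappa} with $\beta=-\alpha$ we have $v(x)=c_{q,-\alpha}|x|^{-1+q\alpha}(1+O(|x|^{-1}))$, and $-1+q\alpha=-(p-1)(1-\sigma)/p\cdot\frac{1}{1}+\dots<0$. More precisely $q\alpha-1=\frac{p\sigma-1}{p}<0$, so $v(x)\to0$ as $|x|\to\infty$. Hence for small $\ep$ the set $K_\ep:=\{v\ge\ep\}$ is a finite set containing $0$, and $u=v$ exactly outside $K_\ep$.

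\textbf{Superharmonicity.} We treat two regions.

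\emph{On $K_\ep$.} Here $u(x)=\ep\ge u(y)$ for every $y$, so each term $\kappa_{p,\sigma}(x-y)\p{u(x)-u(y)}$ is nonnegative. At least one term is positive, since $u<\ep$ far away. Thus $\Delta_p^\sigma u(x)>0$.

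\emph{Off $K_\ep$.} For $x\notin K_\ep$ we have $u(x)=v(x)$ and $u\le v$, so
\[
\Delta_p^\sigma u(x)=\Delta_p^\sigma v(x)+\sum_{y\in K_\ep}\kappa_{p,\sigma}(x-y)\Big(\p{v(x)-\ep}-\p{v(x)-v(y)}\Big).
\]
The correction term is nonnegative because $t\mapsto\p{t}$ is increasing. So it suffices to control the sign of $\Delta_p^\sigma v$.

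For large $|x|$ I use the asymptotics behind Theorem~\ref{thm:main}. At the critical $\alpha$, is the condition $\alpha>\frac{p-2+p\sigma}{(p-1)q}$ satisfied? It is equivalent to $(p-1)(p-1+p\sigma)>p(p-2+p\sigma)$, which reduces to $1>p\sigma$ and therefore holds. Theorem~\ref{thm:main} then gives $w>0$ outside a compact set $R_0$, with $w\asymp|x|^{-p\sigma}$. Crucially, $R_0$ does not depend on $\ep$, so $\Delta_p^\sigma u\ge\Delta_p^\sigma v>0$ for $x\notin R_0\cup K_\ep$.

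\emph{The remaining points.} These are the finitely many $x\in R_0\setminus K_\ep$, where $\Delta_p^\sigma v(x)$ could be negative. As $\ep\to0$ the set $K_\ep$ grows to all of $\ZZ$, so eventually $R_0\subseteq K_\ep$ and no such points remain. Concretely, choose $\ep<\min_{R_0}v$; this is positive since $v>0$ on the finite set $R_0$. Then $R_0\subseteq K_\ep$, and superharmonicity holds everywhere.

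\textbf{Main obstacle.} The real difficulty is that the above requires $K_\ep$ to be finite while $\ep<\min_{R_0}v$. Both hold because $v\to0$ at infinity and is positive. Still, I must verify that Theorem~\ref{thm:main}'s positivity statement applies at the critical $\alpha$ with a compact set independent of $\ep$. It does, since it concerns $v$ alone.

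\textbf{Hardy weight and optimality.} Since $u$ is positive and superharmonic, $w\ge0$, and $w>0$ on $K_\ep$. The ground state representation, or an Agmon–Allegretto–Piepenbrink-type argument, gives that $w$ is a Hardy weight.

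For criticality, I reuse the null-sequence from Theorem~\ref{thm:main}\ref{thm:main_a}. Since $u=v$ outside the finite set $K_\ep$, the same cutoffs of $u$ form a null-sequence: the ground state representation energies differ only by finitely many terms near $K_\ep$, which stay bounded and vanish in the limit after the usual normalization at a fixed point. Hence $u$ is the Agmon ground state and $w$ is $p$-critical.

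For null-criticality, note that $w$ coincides with $\Delta_p^\sigma v/v^{p-1}$ plus the nonnegative correction, away from $K_\ep$. Therefore
\[
\sum_x w\,u^p\ \ge\ \sum_{x\notin K_\ep}\frac{\Delta_p^\sigma v}{v^{p-1}}\,v^p=\infty
\]
by part~\ref{thm:main_b}, whose divergence comes from the tail where the summand is positive.

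\textbf{Asymptotics.} For large $|x|$ the correction is at most $\sum_{y\in K_\ep}\kappa_{p,\sigma}(x-y)\,C\asymp|x|^{-1-p\sigma}$. Dividing by $u^{p-1}\asymp|x|^{-(p-1)(1-q\alpha)}=|x|^{-(1-p\sigma)(p-1)/p}$ gives order $|x|^{-1-p\sigma+(1-p\sigma)(p-1)/p}$. The exponent difference from $-p\sigma$ is $-1+(1-p\sigma)(p-1)/p<0$, so the correction is lower order. Hence $w\asymp|x|^{-p\sigma}$ by Theorem~\ref{thm:main}.
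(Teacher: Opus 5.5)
Your superharmonicity argument, the null-criticality argument and the asymptotics are correct, and they follow the paper's proof closely. The paper also takes the exceptional set from Proposition~\ref{prop:superharmonic}, chooses $\ep$ so that this set lies inside $\{\kappa_{q,-\alpha}>\ep\}$, and compares $\Delta_p^\sigma u$ with $\Delta_p^\sigma\kappa_{q,-\alpha}$ term by term. Your explicit check that the correction term is of lower order usefully expands the paper's one-line remark.

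The step that does not hold as written is criticality. You claim that $\E_{p,u}(e_n)$ and $\E_{p,v}(e_n)$ differ only in finitely many terms near $K_\ep$. But $\kappa_{p,\sigma}$ is a complete graph: each $x\in K_\ep$ is joined to every $y\in\ZZ$, so infinitely many edges meet $K_\ep$. Finiteness is exactly what fails in this non-locally finite setting. Also, ``vanish after the usual normalization at a fixed point'' is not an argument, since normalizing $e_n(o)$ cannot make a merely bounded error disappear.

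The repair is short. For pairs with $x,y\notin K_\ep$, the summands of $\E_{p,u}(e_n)$ and $\E_{p,v}(e_n)$ coincide. For pairs with, say, $x\in K_\ep$, use $0<u\le\ep$, $|\nabla u|\le\ep$ and $0\le e_n\le 1$ to bound the summand by $C\kappa_{p,\sigma}(x-y)$. For $p<2$, drop the second term in the bracket, which leaves $\ep^p\kappa_{p,\sigma}(x-y)|\nabla_{x,y}e_n|^p$. For $p\ge 2$, bound the bracket by $2\ep$. Since $\deg(x)<\infty$ and $e_n\to 1$ pointwise, dominated convergence shows this part tends to $0$ for each of the finitely many $x\in K_\ep$. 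Mere boundedness would also suffice, via Corollary~\ref{cor:null}.

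The paper avoids the splitting altogether. Since $u\le v$ and $t\mapsto t\wedge\ep$ is $1$-Lipschitz, $|\nabla_{x,y}u|\le|\nabla_{x,y}v|$. Hence $\E_{p,u,1}\le\E_{p,v,1}$ and $\E_{p,u,2}\le\E_{p,v,2}$, and the null-sequence estimates from Theorem~\ref{thm:main}~\ref{thm:main_a} transfer directly via Corollary~\ref{cor:GSR}. If you take that route, work with $\E_{p,u,1}$ and $\E_{p,u,2}$ rather than $\E_{p,u}$: for $p<2$, the negative exponent $p-2$ makes $\E_{p,u}$ non-monotone in $|\nabla u|$.
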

		
	\subsection{Toolbox}
	\subsubsection{Ground state representation}\label{s:GSR}
	The main tool for obtaining our results is the so-called ground state representation formula from \cite[Theorem~3.1]{F:GSR}. We restate it here for convenience. We also formulate it in the more general setting of locally summable graphs since we use it in the appendix when showing that null-criticality implies optimality at infinity. 
	
	Let $p>1$ and $0\leq u\in \FF_p$. The \emph{simplified energy (functional)} $\E_{p,u}$ of $\E_p$ with respect to $u$ on $C_c(X)$ is defined as
	\begin{align*}
		\E_{p,u}(\phi)&:=\sum_{x,y\in X}b(x,y) u(x)u(y)(\nabla_{x,y}\phi)^{2} \\
		&\qquad \cdot\left( \bigl(u(x)u(y)\bigr)^{1/2}\abs{\nabla_{x,y}\phi}+ \frac{\abs{\phi(x)}+ \abs{\phi(y)}}{2}\abs{\nabla_{x,y}u} \right)^{p-2},
	\end{align*}
	where we set $0\cdot \infty =0$ if $1<p<2$. By $a\asymp b$ we  mean that there are positive constants $C_1, C_2$ such that $C_1 b\leq a \leq C_2 b$ for non-negative quantities $a,b$.
	
	\begin{theorem}[Ground state representation, {\cite[Theorem~3.1]{F:GSR}}]\label{thm:GSR}
		Let $p> 1$, and $0\leq u\in \FF_p$. Then, we have
		\begin{align}\label{eq:GSRI}
			\E_p(u\phi)- (u\Delta_pu)_p(\phi)\asymp \E_{p,u}(\phi), \qquad \phi\in C_c(X).
		\end{align}
		Furthermore, for $p=2$, the equivalence becomes an equality.
	\end{theorem}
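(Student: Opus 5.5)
The plan is to reduce \eqref{eq:GSRI} to a pointwise two-point inequality with constants depending only on $p$, and then sum it against $b(x,y)$.

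\textbf{Step 1: symmetrization.} Fix $\phi\in C_c(X)$ and $0\leq u\in\FF_p$. Write
\[ (u\Delta_pu)_p(\phi)=\sum_{x}\sum_{y}b(x,y)\p{\nabla_{x,y}u}\,u(x)\abs{\phi(x)}^p. \]
The double sum converges absolutely, because $\phi$ is finitely supported and $u\in\FF_p$. We may therefore symmetrize in $x,y$ and obtain
\[ \E_p(u\phi)-(u\Delta_pu)_p(\phi)=\frac12\sum_{x,y\in X}b(x,y)\,h\bigl(u(x),u(y),\phi(x),\phi(y)\bigr), \]
where
\[ h(a,b,s,t):=\abs{as-bt}^p-\p{a-b}\bigl(a\abs{s}^p-b\abs{t}^p\bigr). \]
On the simplified-energy side, the summand of $\E_{p,u}(\phi)$ is
\[ g(a,b,s,t):=ab(s-t)^2\Bigl((ab)^{1/2}\abs{s-t}+\tfrac{\abs{s}+\abs{t}}{2}\abs{a-b}\Bigr)^{p-2}. \]
In both sums only pairs with $\phi(x)\neq0$ or $\phi(y)\neq 0$ contribute. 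The theorem then follows from the claim
\[ h\asymp g\quad\text{on } [0,\infty)^2\times\RR^2, \]
with constants depending only on $p$, by summing against $b(x,y)$. The factor $\frac12$ is absorbed into the constants. For $p=2$, a direct expansion gives $h=ab(s-t)^2=g/2$, i.e.\ equality up to the normalization of the sum.

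\textbf{Step 2: reductions for the two-point inequality.} Both $h$ and $g$ are homogeneous of degree $p$ in $(a,b)$ and of degree $p$ in $(s,t)$. Both are also invariant under the swap $(a,s)\leftrightarrow(b,t)$ and under $(s,t)\mapsto(-s,-t)$.

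The degenerate cases are checked directly.
\begin{itemize}
\item If $b=0$, then $h=\abs{as}^p-a^{p-1}a\abs{s}^p=0$, and $g=0$ as well (with the convention $0\cdot\infty=0$ when $p<2$).
\item If $s$ and $t$ have opposite signs or one of them vanishes, then $\abs{s-t}\geq\max(\abs s,\abs t)$. In this regime both $h$ and $g$ are comparable to $(ab)^{p/2}\abs{s-t}^p+\abs{a-b}^{p-2}ab(\abs s+\abs t)^p$ up to constants. I would verify this by a short case analysis on whether $a\sim b$.
\end{itemize}
After these cases we may assume $a,b>0$ and $s,t>0$. Normalizing $b=1$ and $t=1$ leaves two variables $a,s>0$. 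One then has
\[ h(a,1,s,1)=\abs{as-1}^p-\p{a-1}(as^p-1). \]
Nonnegativity of $h$ follows from convexity of $r\mapsto\abs r^p$. Setting $r=as$, the expression $h$ is a Bregman-type remainder. I would use the standard fact that the Bregman divergence of $\abs{\cdot}^p$ is comparable to
\[ (r_1-r_2)^2\bigl(\abs{r_1}+\abs{r_2}\bigr)^{p-2}, \]
combined with a one-dimensional Taylor expansion in $s$ around $s=1$.

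\textbf{Step 3: compactness and the main obstacle.} On any compact set of $(a,s)\in(0,\infty)^2$ staying away from the diagonal set $\{s=1\}$, both functions are continuous and positive, so they are comparable. The real work, and the step I expect to be hardest, is obtaining uniform comparability near the degenerate regimes. These are:
\begin{itemize}
\item $s\to1$, where both $h$ and $g$ vanish quadratically in $s-1$;
\item $a\to0$ or $a\to\infty$, where the bracket in $g$ switches between being dominated by $(ab)^{1/2}\abs{s-t}$ and by $\abs{s}\abs{a-b}$;
\item $s\to0$ or $s\to\infty$.
\end{itemize}
In each regime I would compute the leading-order asymptotics of $h$ and check that they match $g$. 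For $s\to 1$, a second-order Taylor expansion of $h$ in $s$ should give $h\approx c\,(s-1)^2\,G(a)$ for an explicit function $G$, which one compares with $g$ as $a$ varies. The cases $p<2$ and $p>2$ have to be treated separately, because the weight $(\cdot)^{p-2}$ changes monotonicity. Particular care is needed for $p<2$ when $s\approx1$ and $a\approx1$ simultaneously, where both terms of the bracket are small. Once these boundary asymptotics are matched, a compactness argument on the normalized region closes the proof.
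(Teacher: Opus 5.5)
Your Step~1 is sound and is the right reduction. The symmetrization is legitimate because the double sum converges absolutely for finitely supported $\phi$ and $u\in\FF_p$. The theorem is then equivalent to the pointwise comparability $h\asymp g$ with constants depending only on $p$. (Small slip: for $p=2$ one has $h=ab(s-t)^2=g$, not $g/2$. With this paper's normalizations the left side equals $\frac12\E_{2,u}(\phi)$.) That two-point inequality is the whole content of the theorem, and your Steps~2--3 do not establish it.

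\textbf{Step~2 contains a false claim.} The opposite-sign bullet fails for $1<p<2$. At $a=b>0$ your comparison quantity $(ab)^{p/2}\abs{s-t}^p+\abs{a-b}^{p-2}ab(\abs s+\abs t)^p$ equals $+\infty$, whereas $h=g=a^p\abs{s-t}^p$. The correct size in that regime is $ab\max(a,b)^{p-2}(\abs s+\abs t)^p$.

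\textbf{Step~3 does not close.} The scheme ``Taylor expansion in $s$ plus compactness in $(a,s)$'' breaks down at the corner $(a,s)=(1,1)$ for \emph{every} $p\neq2$, not only for $p<2$. For fixed $a\neq1$ one has $h(a,1,s,1)=\frac{p(p-1)}2\,a\abs{a-1}^{p-2}(s-1)^2+O(\abs{s-1}^3)$, so $h/g\to\frac{p(p-1)}2$ as $s\to1$. On the line $a=1$, however, $h=g=\abs{s-1}^p$ exactly. So the quadratic expansion is not uniform in $a$, and $h/g$ has direction-dependent limits at $(1,1)$. The ratio cannot be extended continuously to that point, so compactness in the $(a,s)$ variables plus one-parameter asymptotics cannot give uniform constants there. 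A genuinely two-variable estimate is needed, and the proposal does not supply one.

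\textbf{The missing idea} is an exact identity that removes all case analysis. For $a>b\geq0$ set $\lambda=b/a$ and $A=\frac{as-bt}{a-b}$. Then $(1-\lambda)A+\lambda t=s$ and
\[ h(a,b,s,t)=a(a-b)^{p-1}\Bigl[(1-\lambda)\abs{A}^p+\lambda\abs{t}^p-\bigl\lvert(1-\lambda)A+\lambda t\bigr\rvert^p\Bigr]. \]
So $h$ is a multiple of the Jensen gap of $\abs{\cdot}^p$, which makes your ``Bregman'' intuition precise: a Jensen gap is a convex combination of two Bregman divergences based at $s$. For $a=b$ one simply has $h=g=a^p\abs{s-t}^p$.

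To estimate the Jensen gap, use Taylor's formula with the Green kernel $\min(\lambda,\theta)\bigl(1-\max(\lambda,\theta)\bigr)$ on $[0,1]$. This kernel lies between $\lambda(1-\lambda)\min(\theta,1-\theta)$ and $\lambda(1-\lambda)$, and the formula stays valid for $p<2$ since $\abs{\cdot}^{p-2}$ is locally integrable. Combined with the standard bounds on $\int_0^1\abs{A+\theta(B-A)}^{p-2}\,d\theta$, it gives, uniformly in $\lambda\in[0,1]$,
\[ (1-\lambda)\abs{A}^p+\lambda\abs{B}^p-\bigl\lvert(1-\lambda)A+\lambda B\bigr\rvert^p\asymp\lambda(1-\lambda)(A-B)^2\bigl(\abs{A}+\abs{B}\bigr)^{p-2}. \]
Since $A-t=\frac{a(s-t)}{a-b}$, this yields $h\asymp ab(s-t)^2\bigl(\abs{as-bt}+(a-b)\abs{t}\bigr)^{p-2}$. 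Finally, treating $b\geq a/4$ and $b<a/4$ separately shows
\[ \abs{as-bt}+(a-b)\abs t\asymp a\abs{s-t}+(a-b)\abs t\asymp(ab)^{1/2}\abs{s-t}+\tfrac{\abs s+\abs t}2(a-b). \]
The last expression is exactly the bracket in $g$, so $h\asymp g$ with constants depending only on $p$.
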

	
	Of particular interest is the following special case of the ground state representation: Assume that $0< u\in \FF_p$   and set $w:= \Delta_p u/u^{p-1}$, then Theorem~\ref{thm:GSR} yields the following Hardy-type inequality,
	\[ \E_p(\phi)\geq  w_p(\phi), \qquad \phi\in C_c(X).\]
	
	From the inequalities in Theorem~\ref{thm:GSR}, we consequently obtain estimates comparing the energy associated with the $p$-Laplace operator with other functionals, which are usually also referred to as \emph{simplified energies} (see e.g. \cite{DP16, PTT08}). They are all called simplified, because they consist of non-negative terms only, and the difference operator $\nabla$ applies either to $u$ or $\phi$ but not to the product $u\cdot \phi$.
	
	We set on $C_c(X)$
	\begin{align*}
		\E_{p,u,1}(\phi):= \sum_{x,y\in X}b(x,y) (u(x)u(y))^{p/2}\abs{\nabla_{x,y}\phi}^{p},
	\end{align*}
	and for $p\geq 2$, we define on $C_c(X)$
	\[\E_{p,u,2}(\phi):=\sum_{x,y\in X}b(x,y)u(x)u(y)\abs{\nabla_{x,y}u}^{p-2} \left(\frac{\abs{\phi(x)}+ \abs{\phi(y)}}{2}\right)^{p-2}\abs{\nabla_{x,y}\phi}^{2}.\]
	
	\begin{corollary}[{\cite[Corollary~3.2]{F:GSR}}]\label{cor:GSR}
		If $1<p\leq 2$, then there is a positive constant $C_{p}$ such that for all $0\leq u\in \FF_p$
		\begin{align}\label{eq:GSRI_p<2}
			\E_p(u\phi) - (u\Delta_pu)_p(\phi)\leq C_p \E_{p,u,1}(\phi),\quad \phi\in C_c(X).
		\end{align}
		and if $p\geq 2$, the reversed inequality in \eqref{eq:GSRI_p<2} holds true, i.e., 
		\begin{align}\label{eq:GSRI_p>2}
			\E_p(u\phi) - (u\Delta_pu)_p(\phi)\geq C_p \E_{p,u,1}(\phi),\quad \phi\in C_c(X).
		\end{align}
		Furthermore, both inequalities become equalities if $p=2$.
		
		Moreover, if $p\geq 2$, we have for all $0\leq u\in \FF_p$ and $\phi\in C_c(X)$
		\begin{align}\label{eq:GSRI_p>2Triangle}
			\E_p(u\phi)- (u\Delta_pu)_p(\phi) \asymp \E_{p,u,1}(\phi)+\E_{p,u,2}(\phi), \qquad \phi\in C_c(X).
		\end{align}
	\end{corollary}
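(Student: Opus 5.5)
The plan is to deduce the corollary from Theorem~\ref{thm:GSR} by an edgewise comparison of the simplified energy $\E_{p,u}$ with $\E_{p,u,1}$ and $\E_{p,u,2}$. Since all three functionals are sums over $x,y\in X$ of non-negative terms with the same weight $b(x,y)$, it is enough to compare the summands pointwise. For fixed $x,y$ I set
\[
A:=\bigl(u(x)u(y)\bigr)^{1/2}\abs{\nabla_{x,y}\phi},\qquad B:=\frac{\abs{\phi(x)}+\abs{\phi(y)}}{2}\abs{\nabla_{x,y}u}.
\]
Then the summand of $\E_{p,u}$ is $A^2(A+B)^{p-2}$, the summand of $\E_{p,u,1}$ is $A^p$, and, for $p\geq 2$, the summand of $\E_{p,u,2}$ is $A^2B^{p-2}$.

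First let $1<p\leq 2$. Since $p-2\leq 0$ and $A+B\geq A$, we have $(A+B)^{p-2}\leq A^{p-2}$ whenever $A>0$, and hence $A^2(A+B)^{p-2}\leq A^p$. If $A=0$, the convention $0\cdot\infty=0$ makes both sides vanish. Summing over $x,y$ gives $\E_{p,u}\leq \E_{p,u,1}$. Combining this with the upper bound in \eqref{eq:GSRI} yields \eqref{eq:GSRI_p<2}, where $C_p$ is the upper comparison constant from Theorem~\ref{thm:GSR}.

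Now let $p\geq 2$. Here $(A+B)^{p-2}\geq A^{p-2}$, so $\E_{p,u}\geq\E_{p,u,1}$. The lower bound in \eqref{eq:GSRI} then gives \eqref{eq:GSRI_p>2}. For the two-sided statement \eqref{eq:GSRI_p>2Triangle} I use the elementary equivalence
\[
\min(1,2^{p-3})\,(A^{p-2}+B^{p-2})\leq (A+B)^{p-2}\leq \max(1,2^{p-3})\,(A^{p-2}+B^{p-2}),
\]
which follows from convexity or concavity of $t\mapsto t^{p-2}$ according to whether $p\geq 3$ or $2\leq p\leq 3$. Multiplying by $A^2$ and summing gives $\E_{p,u}\asymp \E_{p,u,1}+\E_{p,u,2}$, and combining with \eqref{eq:GSRI} gives the claim.

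For $p=2$ the exponent $p-2$ vanishes, so $\E_{2,u}=\E_{2,u,1}$ exactly. Since Theorem~\ref{thm:GSR} is an equality for $p=2$, both inequalities become equalities with $C_2=1$.

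There is no real obstacle here. The only points requiring care are the $0\cdot\infty$ convention when $p<2$ and $A=0$, and checking that the normalizations of the double sums in $\E_{p,u}$, $\E_{p,u,1}$ and $\E_{p,u,2}$ match, which they do since all are written as full sums over $x,y\in X$.
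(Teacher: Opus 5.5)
Your argument is correct. Writing the three summands edgewise as $A^2(A+B)^{p-2}$, $A^p$ and $A^2B^{p-2}$, and then using $(A+B)^{p-2}\le A^{p-2}$ for $p\le 2$, $(A+B)^{p-2}\ge A^{p-2}$ for $p\ge 2$, and $(A+B)^{p-2}\asymp A^{p-2}+B^{p-2}$, is the natural way to deduce the statement from Theorem~\ref{thm:GSR}. The paper gives no proof of its own and simply cites \cite[Corollary~3.2]{F:GSR}.

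One minor point concerns $p=2$. The paper's $\E_p$ carries a factor $\frac12$, while $\E_{p,u,1}$ is a full double sum without one. A direct computation therefore gives $\E_2(u\phi)-(u\Delta_2u)_2(\phi)=\frac12\E_{2,u,1}(\phi)$, so the equality holds with $C_2=\frac12$ rather than $C_2=1$. This normalization issue comes from how Theorem~\ref{thm:GSR} is restated, not from your reasoning.
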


	Furthermore, we will make use of the following characterization of $p$-criticality. It is the direct non-linear generalization of \cite[Theorem~5.3]{KPP}.
	\begin{lemma}[Null-sequence, {\cite[Theorem~3.3 and Appendix~A]{F}}]\label{lem:null-sequence}
	Let $w$ be a Hardy-type weight. Then the following are equivalent:
	\begin{enumerate}[label = ({\roman*})]	
		\item The Hardy-type weight $w$ is $p$-critical.
		\item There exists a unique (up to scalar multiplication) function $0\lneq v \in \FF_p$ such that $\Delta_p v \geq  wv^{p-1}$. This function is $(\Delta_p-w)$-harmonic and called the \emph{Agmon ground state}.
		\item There exists a sequence $(e_n )$ in $C_c (X)$ such that $(\E_p - w_p)(e_n ) \to 0$ and such that
		$(e_n (o))$ converges to a non-zero constant for some $o \in X$. Such a sequence is called a \emph{null-sequence}.
		\item There exists a null-sequence $(e_n )$ in $C_c (X)$ such that
		$(e_n )$ converges pointwise from below to a function $0\lneq v \in \FF_p$ with $\Delta_p v=wv^{p-1}$.
	\end{enumerate}
	\end{lemma}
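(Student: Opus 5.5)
The proof runs through a single quantity, the \emph{$w$-capacity} of a vertex $o\in X$,
\[ \mathrm{cap}_w(o):=\inf\set{(\E_p-w_p)(\phi): \phi\in C_c(X),\ \phi(o)=1}. \]
Because $\E_p-w_p$ is $p$-homogeneous, $\mathrm{cap}_w(o)>0$ is equivalent to $(\E_p-w_p)(\phi)\geq \mathrm{cap}_w(o)\abs{\phi(o)}^p$ for all $\phi\in C_c(X)$. Condition (iii) says exactly that $\mathrm{cap}_w(o)=0$ for some $o$.

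\textbf{Step 1: (i)$\Leftrightarrow$(iii).} For (i)$\Rightarrow$(iii), suppose $\mathrm{cap}_w(o)>0$. Then $\tilde w:=w+\mathrm{cap}_w(o)1_o\gneq w$ is again a Hardy-type weight, contradicting criticality. So $\mathrm{cap}_w(o)=0$, and a minimizing sequence is a null-sequence.

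For (iii)$\Rightarrow$(i), I first invoke the Allegretto--Piepenbrink-type theorem for Hardy-type weights on locally summable graphs from \cite{F:GSR}: it gives a positive $u\in\FF_p$ with $\Delta_p u\geq wu^{p-1}$. Writing a null-sequence as $e_n=u\psi_n$, Theorem~\ref{thm:GSR} gives
\[ (\E_p-w_p)(e_n)\gtrsim \E_{p,u}(\psi_n). \]
Hence $\E_{p,u}(\psi_n)\to 0$. Since every term of $\E_{p,u}$ is non-negative and controls $\abs{\nabla_{x,y}\psi_n}$ on each edge with $b(x,y)>0$, connectedness forces $\psi_n(x)\to c/u(o)$ for every $x\in X$. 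Now suppose $\tilde w\gneq w$ is a Hardy-type weight, with $\tilde w(x_0)>w(x_0)$. Then
\[ (\E_p-w_p)(e_n)\geq(\tilde w-w)(x_0)\abs{e_n(x_0)}^p\to (\tilde w-w)(x_0)\,\abs{c\, u(x_0)/u(o)}^p>0, \]
which is a contradiction.

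\textbf{Step 2: (iii)$\Rightarrow$(ii).} Let $v$ be any positive supersolution, i.e.\ $\Delta_p v\geq wv^{p-1}$. The argument of Step~1 with $u=v$ shows that every null-sequence converges pointwise to a multiple of $v$. Two supersolutions are therefore proportional, which gives uniqueness.

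For harmonicity, suppose $h:=\Delta_p v-wv^{p-1}\gneq0$. Applying Theorem~\ref{thm:GSR} with $u=v$ shows that $w+h/v^{p-1}$ is a Hardy-type weight strictly above $w$, contradicting (i), which is available by Step~1.

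\textbf{Step 3: (ii)$\Rightarrow$(i).} Assume $w$ is not critical, so $\mathrm{cap}_w(o)>0$ for every $o$; positivity at one vertex propagates to all vertices by the same edge-wise control. I then construct a second positive supersolution that is not proportional to $v$. This is a Green-type function with pole at $o$: take the minimizers $g_K$ of $\E_p-w_p$ over an exhaustion $K\uparrow X$ with $g_K(o)=1$, and pass to a limit $g$ using a Harnack inequality coming from local connectedness. The limit satisfies $\Delta_p g=wg^{p-1}$ away from $o$ and $\Delta_p g> wg^{p-1}$ at $o$. It cannot be a multiple of $v$, since $v$ is harmonic by Step~2 applied to $v$ (using positivity of capacity appropriately), and this contradicts uniqueness.

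\textbf{Step 4: (iii)$\Rightarrow$(iv).} Starting from a null-sequence, I replace $e_n$ by $\abs{e_n}\wedge (c\,v)$. Both operations do not increase $\E_p$; the cut-off error is controlled via Theorem~\ref{thm:GSR}, so the new sequence is still null. By Step~2 it converges to $c\,v$, and by construction it lies below $c\,v$; monotonicity can then be arranged by passing to a subsequence and taking running maxima. The implication (iv)$\Rightarrow$(iii) is trivial.

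\textbf{Main obstacle.} Since $X$ need not be locally finite, all limit passages need care. In Step~3 I must show that the limit function lies in $\FF_p$ and that $\Delta_p g_K\to\Delta_p g$ pointwise. I would obtain this by dominated convergence in $\sum_y b(x,y)\p{\nabla_{x,y}g_K}$, using Harnack bounds $g_K\leq C_x$ and the summability $\sum_y b(x,y)<\infty$. Fatou's lemma is not enough to give equality in $\Delta_p g=wg^{p-1}$ when the graph has infinitely many neighbours, so the domination argument is essential.
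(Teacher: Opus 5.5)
\textbf{Gap in Step~3.} The implication (ii)$\Rightarrow$(i) does not go through as written. You conclude that the Green-type function $g$ is not a multiple of $v$ because $v$ is harmonic ``by Step~2''. But Step~2 obtains harmonicity from (i), which is exactly what Step~3 assumes fails, so the argument is circular. Positivity of the capacity does not rescue it; it points the other way. If $\mathrm{cap}_w(o)>0$, then $w+\mathrm{cap}_w(o)m(o)^{-1}1_o$ is Hardy-type, and the Allegretto--Piepenbrink theorem gives a positive supersolution for it. Uniqueness in (ii) forces this supersolution to be a multiple of $v$, so $v$ is \emph{strictly} superharmonic at $o$, just like $g$, and no contradiction arises.

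Repairing this by comparing two Green functions with different poles would require $\Delta_p g=wg^{p-1}$ away from the pole. That is exactly the limit passage you flag, and your proposed majorant does not deliver it. Bounds $g_K(y)\le C_y$, whether from Harnack chains or from $\mathrm{cap}_w(y)g_K(y)^p\le\mathrm{cap}_{K_0}(o)$, depend on $y$. Dominated convergence in $\sum_y b(x,y)(\nabla_{x,y}g_K)^{\langle p-1\rangle}$ instead needs $\sum_y b(x,y)\sup_K g_K(y)^{p-1}<\infty$, for instance a comparison $g_K\le Cv$ with $v\in\FF_p$, which you do not establish. Fatou alone only gives the supersolution inequality in the limit.

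\textbf{A simpler fix.} The Green function is unnecessary. Suppose $\tilde w\gneq w$ is Hardy-type with $\tilde w(x_0)>w(x_0)$. The Allegretto--Piepenbrink theorem you already use in Step~1 gives $0<\tilde v\in\FF_p$ with $\Delta_p\tilde v\ge\tilde w\tilde v^{p-1}$. This is a $w$-supersolution that is strict at $x_0$. If the harmonicity clause is read as part of (ii), this already contradicts it. Otherwise set $\tilde v_\delta:=\tilde v-\delta 1_{x_0}$:
\begin{itemize}
\item at $y\ne x_0$, lowering the value at $x_0$ only increases $\Delta_p$ and leaves $w(y)\tilde v(y)^{p-1}$ unchanged;
\item at $x_0$, the map $\delta\mapsto\Delta_p\tilde v_\delta(x_0)$ is continuous by dominated convergence, since $\tilde v\in\FF_p$ and $\deg(x_0)<\infty$, so strictness survives for small $\delta>0$.
\end{itemize}
This produces two non-proportional positive supersolutions (as soon as $X$ has two points), contradicting uniqueness.

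With this replacement, your cycle (i)$\Leftrightarrow$(iii)$\Rightarrow$(ii)$\Rightarrow$(i), (iii)$\Leftrightarrow$(iv) becomes a reasonable self-contained derivation from Allegretto--Piepenbrink and Theorem~\ref{thm:GSR}. The paper gives no argument of its own; it imports the lemma from \cite{F} (alternatively \cite{F:GSR} with \cite{DKP}).

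\textbf{Smaller points to tidy.}
\begin{itemize}
\item The added weight in Step~1 should be $\mathrm{cap}_w(o)m(o)^{-1}1_o$.
\item For $1<p<2$, a single term of $\E_{p,u}$ controls $\abs{\nabla_{x,y}\psi_n}$ only if $\psi_n$ stays bounded at one endpoint. You therefore need to propagate from $o$ along paths.
\item Taking the minimum with $cv$ does not decrease $\E_p$ itself. What does hold is that truncating $\psi\ge 0$ at level $c$ raises each term of $\E_{p,v}$ by at most a constant factor, which suffices via Theorem~\ref{thm:GSR}.
\item For the running maxima, the crude bound by $\sum_k\E_{p,v}(\psi_{n_k})$ does not tend to $0$. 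You need a subsequence with summable energies, plus dominated convergence over the edges meeting the early supports.
\end{itemize}
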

	Alternatively, the statement can also be deduced from \cite[Theorem~4.1]{F:GSR} together with \cite[Lemma~2.4]{DKP}. For the case $p=2$ see also \cite{HKPLattice,HKP,  Hake, KLW}.
	
	As a consequence of the ground state representation formula, we get the following result.
	\begin{corollary}\label{cor:null}
		Let $w=\frac{\Delta_p u}{u^{p-1}}$ be a Hardy-type weight for some function $0< u \in \FF_p$. Then, $w$ is $p$-critical if and only if there exists a sequence $(e_n)$ in $C_c(X)$ such that $e_n\to 1$ pointwise and $\sup_n \E_{p,u}(e_n)<\infty$.
	\end{corollary}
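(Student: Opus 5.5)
Both directions reduce to Theorem~\ref{thm:GSR} and Lemma~\ref{lem:null-sequence}, so I would prove the two implications separately.

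\emph{Forward direction.} Assume $w$ is $p$-critical. By Lemma~\ref{lem:null-sequence}(iii)/(iv) there is a null-sequence $(e_n)$ with $(\E_p-w_p)(e_n)\to 0$ that converges pointwise to the Agmon ground state. Since $\Delta_p u = w u^{p-1}$ with $u>0$, the function $u$ is $(\Delta_p-w)$-harmonic. By uniqueness in Lemma~\ref{lem:null-sequence}(ii), the ground state is a positive multiple of $u$; after rescaling we may assume $e_n\to u$ pointwise. Set $\phi_n:=e_n/u\in C_c(X)$, so that $\phi_n\to 1$ pointwise. Because $u\Delta_p u = w u^p$, we have $(u\Delta_pu)_p(\phi_n)=w_p(u\phi_n)=w_p(e_n)$. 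Theorem~\ref{thm:GSR} then gives
\[ \E_{p,u}(\phi_n)\asymp \E_p(e_n)-w_p(e_n)\to 0, \]
so in particular $\sup_n\E_{p,u}(\phi_n)<\infty$.

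\emph{Reverse direction.} Assume $\phi_n\to 1$ pointwise and $\sup_n \E_{p,u}(\phi_n)<\infty$. Put $e_n:=u\phi_n\in C_c(X)$. By Theorem~\ref{thm:GSR}, $(\E_p-w_p)(e_n)\asymp \E_{p,u}(\phi_n)$ is bounded, and $e_n(o)\to u(o)>0$ for any fixed $o\in X$. This is not yet a null-sequence, because the energies need not tend to $0$. I plan to use boundedness to get criticality by contradiction. Suppose $w$ is subcritical. Then there is $0\lneq \tilde w$ with $\E_p-w_p\ge \tilde w_p$ on $C_c(X)$. Subcriticality is equivalent to the negation of (iii), so one can in fact take $\tilde w$ strictly positive at a vertex $o$ and still retain a Hardy inequality. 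This gives a weight that is positive on all of $X$ via the standard local-to-global estimate for subcritical operators: $(\E_p-w_p)(\phi)\ge c_K\,\|\phi\|_{\ell^p(K)}^p$ for each finite $K$. I would derive that estimate from Lemma~\ref{lem:null-sequence}(iii) by a compactness/normalization argument.

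\emph{Main obstacle.} With this estimate, boundedness of $(\E_p-w_p)(e_n)$ alone is not contradictory. The remaining step is to upgrade a bounded sequence tending to $1$ into one with vanishing simplified energy. I would do this with a $p$-capacity argument. Consider the functional $\phi\mapsto \E_{p,u}(\phi)^{1/p}$, which is comparable to a seminorm-like quantity on $C_c(X)$ for $\phi$ near $1$. Bounded energy and pointwise convergence to $1$ give, by the reflexivity/weak-compactness argument (Banach--Saks/Mazur convex combinations), a sequence whose energy relative to the constant function $1$ tends to zero. Equivalently, the $(\E_p-w_p)$-capacity of $\{o\}$ is zero.

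Carrying out convex combinations requires some convexity. For $\E_p-w_p$ in the variable $e=u\phi$ this is delicate, since $\E_p-w_p$ is not convex. I would instead work with the energy $\E_p(u\,\cdot)$ restricted to differences, using the comparability from Theorem~\ref{thm:GSR} to bound the error terms. Alternatively, I would cite \cite[Theorem~4.1]{F:GSR}, which characterizes criticality by exactly such capacity-type conditions. I expect this step to be the main difficulty.
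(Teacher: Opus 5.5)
Your forward implication is correct and is the paper's argument: identify the Agmon ground state with a multiple of $u$, divide the null-sequence by $u$, and apply Theorem~\ref{thm:GSR} with $(u\Delta_p u)_p(\phi)=w_p(u\phi)$.

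The converse, however, is not proved. After setting $e_n=u\phi_n$ you correctly observe that Theorem~\ref{thm:GSR} only gives $\sup_n(\E_p-w_p)(e_n)<\infty$, which is not a null-sequence. What follows is a list of strategies rather than an argument.

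\begin{itemize}
\item The detour through subcriticality is a dead end, as you note yourself: a weight $W>0$ with $\E_p-w_p\geq W_p$ on $C_c(X)$ is perfectly compatible with bounded energies.
\item The Mazur/Banach--Saks idea needs convexity (ideally, the $p$-th power of a seminorm of a linear expression in $\phi$), and $\E_{p,u}$ does not have it in general. For $1<p\leq 2$ one only has $\E_{p,u}\leq\E_{p,u,1}$, so your hypothesis does not even bound the seminorm $\E_{p,u,1}^{1/p}$.
\item For $p\geq 2$ the hypothesis does bound $\E_{p,u,1}$, and Mazur would give convex combinations $\psi_k$ with $\E_{p,u,1}(\psi_k)\to 0$. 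But the only available upper bound for $(\E_p-w_p)(u\psi_k)$, Corollary~\ref{cor:GSR}, involves $\E_{p,u,1}+\E_{p,u,2}$. So you would also need $\E_{p,u,2}(\psi_k)\to 0$, and $\E_{p,u,2}$ is not convex: already for $p=3$ its summands are positive multiples of $(\abs{s}+\abs{t})(s-t)^2$, whose Hessian has determinant $-16(s-t)^2$ on $\{s,t>0\}$.
\item Your fallback, citing \cite[Theorem~4.1]{F:GSR}, is not shown to contain the statement you need.
\end{itemize}

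The missing ingredient is exactly this upgrade. If some sequence in $C_c(X)$ converges pointwise to $1$ with bounded $\E_{p,u}$-energy, then there must be a sequence $(\psi_n)$ in $C_c(X)$ converging pointwise to $1$ with $\E_{p,u}(\psi_n)\to 0$. The paper takes this from \cite[Proposition~3.3]{AFS}.

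With it, your setup finishes immediately. By Theorem~\ref{thm:GSR}, $(\E_p-w_p)(u\psi_n)\asymp\E_{p,u}(\psi_n)\to 0$, and $u\psi_n(o)\to u(o)>0$. Hence $(u\psi_n)$ is a null-sequence, and Lemma~\ref{lem:null-sequence} yields $p$-criticality. Without citing or proving this bounded-to-null step, your reverse implication remains open.
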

	\begin{proof}
		By the null-sequence characterization in Lemma~\ref{lem:null-sequence}, ``$\implies$'' is a trivial consequence of Theorem~\ref{thm:GSR}. The converse follows from \cite[Proposition~3.3]{AFS}, that is, the assumption is equivalent to the existence of a null-sequence for $\E_{p,u}$, and by Theorem~\ref{thm:GSR} the result follows.
	\end{proof}

	\subsubsection{Properties of the $\Gamma$-function}
	The edge weights of the graph corresponding to the fractional $p$-Laplacian are given via ratios of $\Gamma$-functions, see Eq.~\eqref{eq:kappa}. It is well known that the $\Gamma$-function is a meromorphic function satisfying the functional equation
		\[ \Gamma(1+z) = z\Gamma(z) \] 
	for all $z\in\CC$. Furthermore, $\Gamma$ is continuous on $(0,\infty)$ and $\Gamma(1)=1$. 
	
	\begin{lemma}\label{lem:Gamma}
		Let $x\in\NN$, $y\in\NN_0$, $s\in(0,1)$ and $t\in(0,1/2)$. Then, the quotients 
			\[ \frac{\Gamma(x-s)}{\Gamma(x+1+s)} 
				\qquad\text{and}\qquad
				\frac{\Gamma(y+t)}{\Gamma(y+1-t)} \] 
		are strictly decreasing in $x$ and $y$ respectively. 
	\end{lemma}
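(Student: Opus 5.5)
Since both quotients are built from $\Gamma$ at arguments shifted by integers, we compare consecutive terms directly with the functional equation $\Gamma(1+z)=z\Gamma(z)$. No monotonicity property of $\Gamma$ itself (such as log-convexity) is needed.

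For the first quotient, set $f(x):=\Gamma(x-s)/\Gamma(x+1+s)$ for $x\in\NN$. Since $x\geq 1$ and $s\in(0,1)$, every argument $x-s$, $x+1-s$, $x+1+s$, $x+2+s$ is positive. Hence all Gamma values are finite and positive, and $f(x)>0$. The functional equation gives $\Gamma(x+1-s)=(x-s)\Gamma(x-s)$ and $\Gamma(x+2+s)=(x+1+s)\Gamma(x+1+s)$, so
\[ \frac{f(x+1)}{f(x)}=\frac{x-s}{x+1+s}. \]
This ratio is strictly less than $1$, because $x-s<x+1+s$ and $x-s>0$. Therefore $f(x+1)<f(x)$ for every $x\in\NN$.

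For the second quotient, set $g(y):=\Gamma(y+t)/\Gamma(y+1-t)$ for $y\in\NN_0$. Since $t\in(0,1/2)$, the arguments $y+t$ and $y+1-t$ are positive, even for $y=0$. So $g(y)>0$, and the same computation gives
\[ \frac{g(y+1)}{g(y)}=\frac{y+t}{y+1-t}. \]
This ratio is strictly less than $1$ exactly when $t<1-t$, i.e.\ when $t<\tfrac12$, which is the hypothesis. Hence $g(y+1)<g(y)$ for every $y\in\NN_0$.

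The only point needing attention is the positivity of the arguments, so that the Gamma values are finite and positive and dividing by them is legitimate. This is precisely why the lemma requires $x\geq 1$ for the first quotient. It also explains the range $t<1/2$ in the second: this condition is what makes the ratio of consecutive terms smaller than $1$.
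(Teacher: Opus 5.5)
Your proposal is correct and follows essentially the same route as the paper, which also computes the ratio of consecutive terms with the functional equation, obtains $\frac{y+t}{y+1-t}<1$ from $t<\frac12$, and notes that the first quotient is handled similarly. Your explicit check that all Gamma arguments are positive, so that the Gamma values are finite and positive, is a small addition the paper leaves implicit.
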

	\begin{proof}
		Using the functional equation of the Gamma function, we have 
		\[ \frac{\Gamma(y+1+t)}{\Gamma(y+2-t)} 
			= \frac{y+t}{y+1-t} \frac{\Gamma(y+t)}{\Gamma(y+1-t)} 
			< \frac{\Gamma(y+t)}{\Gamma(y+1-t)} \] 
		since $t\in(0,1/2)$. The other computation is similar. 
	\end{proof}

	\section{On the sign of  $\Delta_p^\sigma\kappa_{q,-\alpha}$}\label{s:sign}
	First, we show that $\Delta_p^\sigma\kappa_{q,-\alpha} \geq 0$ outside of a compact set using the asymptotics in Eq.~\eqref{eq:kappaAsymp}.  We then use a more technical approach based on Lemma~\ref{lem:Gamma} to prove that it is superharmonic at $0$ but sometimes subharmonic at $\pm 1$. Hence, in general, we will only have Hardy-\emph{type} weights on $\ZZ$. This is different from the case $p=q=2$, where it is known that $\Delta_2^\sigma\kappa_{2,-\alpha} \geq 0$ on $\ZZ$ as long as $\sigma > \alpha$, see \cite[Proposition~2]{KN} (which uses the spectral theorem) and \cite{CR} (which uses the Fourier transforms).
	
	\begin{proposition}[Superharmonicity outside a compact set]\label{prop:superharmonic}
		Let $p,q>1$, $\sigma\in(0,\frac1p)$ and $\alpha\in(0,\frac1q)$. If $\alpha>\frac{p-2+p\sigma}{(p-1)q}$, then there exists a finite set $K\subset\ZZ$ such that $\Delta_p^\sigma \kappa_{q,-\alpha}(x) > 0$ for all $x\notin K$. In particular, $\frac{\Delta_p^\sigma \kappa_{q,-\alpha}}{\kappa^{p-1}_{q,-\alpha}}$ is a non-trivial Hardy-type weight. 
	\end{proposition}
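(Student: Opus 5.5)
The plan is to compare $\Delta_p^\sigma\kappa_{q,-\alpha}(x)$ for large $|x|$ with the continuum fractional $p$-Laplacian of a power function. By scaling, the latter equals an explicit constant times a power of $|x|$, and the constant can be shown to be positive by an inversion trick. Set $u:=\kappa_{q,-\alpha}$ and $\gamma:=1-q\alpha\in(0,1)$. By \eqref{eq:kappaAsymp} (with $\beta=-\alpha$ and $p$ replaced by $q$), $u(y)=c_{q,-\alpha}|y|^{-\gamma}(1+O(|y|^{-1}))$, and $u$ is positive, even, and strictly decreasing in $|y|$ by Lemma~\ref{lem:Gamma} with $t=q\alpha/2\in(0,\tfrac12)$. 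Similarly $\kappa_{p,\sigma}(k)=c_{p,\sigma}|k|^{-1-p\sigma}(1+O(|k|^{-1}))$. The hypothesis $\alpha>\frac{p-2+p\sigma}{(p-1)q}$ is equivalent to $(1-\gamma)(p-1)>p-2+p\sigma$, i.e.
\[ \gamma(p-1)<1-p\sigma , \]
which says that $\gamma$ lies below the fundamental-solution exponent $(1-p\sigma)/(p-1)$. By symmetry it suffices to treat $x\to+\infty$.

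\textbf{Step 1 (the continuum constant).} I would define
\[ I:=\mathrm{p.v.}\!\int_{\RR}|1-t|^{-1-p\sigma}\,\p{1-|t|^{-\gamma}}\,\dd t \]
and show $I>0$. Split the line into $|t|<1$ and $|t|>1$, and on $|t|>1$ substitute $t=1/s$. Since $|1-1/s|^{-1-p\sigma}s^{-2}=|1-s|^{-1-p\sigma}|s|^{p\sigma-1}$, this gives
\[ I=\int_{-1}^{1}|1-t|^{-1-p\sigma}\,(1-|t|^{\gamma})^{p-1}\Bigl(|t|^{p\sigma-1}-|t|^{-\gamma(p-1)}\Bigr)\,\dd t . \]
For $|t|<1$ the bracket is strictly positive precisely because $p\sigma-1<-\gamma(p-1)$. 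The integrand is integrable at $t=0$, since both exponents exceed $-1$. At $t=1$ it behaves like $(1-t)^{p-p\sigma-1}$, which is integrable; in particular the principal value is an honest integral after the pairing. Hence $I>0$.

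\textbf{Step 2 (discretisation).} I aim to show
\[ x^{\gamma(p-1)+p\sigma}\,\Delta_p^\sigma u(x)\longrightarrow c_{p,\sigma}\,c_{q,-\alpha}^{\,p-1}\,I>0\qquad (x\to\infty), \]
which gives the claim with $K$ a suitable finite interval. Write $\Delta_p^\sigma u(x)=\sum_{y}\kappa_{p,\sigma}(x-y)\p{u(x)-u(y)}$ and put $t=y/x$. I would split into three regions.

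\begin{enumerate}
\item The region $|y|\le \delta x$. Here $u(y)$ is not yet in its asymptotic regime near $y=0$, but it is bounded by $C(1+|y|)^{-\gamma}$. Since $\gamma(p-1)<1$, the sum $\sum_{|y|\le\delta x}u(y)^{p-1}\lesssim(\delta x)^{1-\gamma(p-1)}$. After multiplying by $\kappa_{p,\sigma}\approx x^{-1-p\sigma}$ and the normalisation, this region contributes the Riemann sum of the $t$-integral over $|t|\le\delta$, up to $o(1)$.
\item The region $|y-x|\ge\delta x$, $|y|\ge \delta x$. The asymptotics above hold uniformly with relative error $O(1/(\delta x))$, so the sum is a Riemann sum of a continuous integrand and converges to the corresponding part of $I$. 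The tail $|y|\to\infty$ is controlled by $|t|^{-1-p\sigma}$.
\item The near-diagonal region $|y-x|<\delta x$, with $k=y-x$. This is the main obstacle, taken up in Step 3.
\end{enumerate}

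\textbf{Step 3 (the near-diagonal region).} The terms behave like $|k|^{-1-p\sigma}\,|u'(x)k|^{p-1}\sim|k|^{p-2-p\sigma}x^{-(\gamma+1)(p-1)}$. For $p\ge2$ (indeed whenever $p\sigma<p-1$), summing over $|k|<\delta x$ gives a contribution that is $O(\delta^{p-1-p\sigma})$ after normalisation, hence small uniformly in $x$ as $\delta\to0$. For $1<p<2$ this crude bound can fail, and I would pair $k$ with $-k$. By the Gamma-ratio representation \eqref{eq:kappa} and the functional equation, $u(x+k)$ and $u(x-k)$ admit a second-order expansion $u(x\pm k)=u(x)\pm u'(x)k+O(k^2x^{-\gamma-2})$. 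Using the Hölder continuity of $s\mapsto\p{s}$, the paired term
\[ \kappa_{p,\sigma}(k)\Bigl[\p{u(x)-u(x+k)}+\p{u(x)-u(x-k)}\Bigr] \]
is then $O\bigl(|k|^{-1-p\sigma}\,|u'(x)k|^{p-2}\,k^2x^{-\gamma-2}\bigr)$. Summing over $|k|<\delta x$ again yields, after normalisation, a bound of order $\delta^{\,p-p\sigma}$, which tends to $0$. The continuum integral over $|t-1|<\delta$ is likewise small, by the pairing in Step 1.

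\textbf{Conclusion.} Letting first $x\to\infty$ and then $\delta\to0$ gives the limit in Step 2. Since $u>0$ and $\Delta_p^\sigma u>0$ outside $K$, the weight $\Delta_p^\sigma u/u^{p-1}$ is nontrivial. By Theorem~\ref{thm:GSR} it is a Hardy-type weight.
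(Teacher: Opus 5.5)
Your proposal is correct and follows the paper's strategy. Normalising by $x^{-p\sigma-(p-1)(1-q\alpha)}$ and applying the inversion $t\mapsto 1/t$ gives exactly the paper's integral $I$, which is positive precisely when $p\sigma+(p-1)(1-q\alpha)<1$. The paper passes from sums to integrals through a chain of $\gtrsim$ estimates applied to a difference of two sums. You instead prove the actual limit via a near-origin, bulk and near-diagonal split, using the $k\leftrightarrow -k$ pairing for $1<p<2$; this is more careful and mirrors the paper's proof of Proposition~\ref{prop:optimal constant}. One justification should be corrected: the bound you state in Step~3 comes from the mean value theorem on an interval where both arguments are comparable to $|u'(x)k|$, not from H\"older continuity. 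H\"older continuity alone would only give $|a+b|^{p-1}$, which is too weak when $p\sigma>2(p-1)$.
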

	
	\begin{proof}
		We set $u = \kappa_{q,-\alpha}$. By symmetry of $u$, we may assume $x$ to be positive. We consider  
			\[ \Delta_p^\sigma u(x) 
					+ \kappa_{p,\sigma}(x)(\nabla_{0,x} u)^{p-1}
				= \sum_{y\neq0} \kappa_{p,\sigma}(x-y) 
					(\nabla_{x,y} u)^{\langle p-1\rangle}. \] 
		We show that the latter is a positive function under the assumption on $\alpha$. We will then conclude superharmonicity outside a compact set by showing that $\kappa_{p,\sigma}(x)(\nabla_{0,x} u)^{p-1}$ decays faster than the right-hand side. Noting that, on the right-hand side, the terms where $|y|=x$ vanish, we split the sum into sets over $|y|<x$ and $|y|>x$, then regroup into sums over $0<y<x$ and $y>x$ to obtain 
		\begin{align*}
			&\sum_{y>x} (\kappa_{p,\sigma}(x-y) + \kappa_{p,\sigma}(x+y)) 
						(\nabla_{x,y} u)^{p-1} \\ 
				&\qquad- \sum_{0<y<x} (\kappa_{p,\sigma}(x-y) + \kappa_{p,\sigma}(x+y)) 
						(\nabla_{y,x} u)^{p-1}. 
		\intertext{Using the asymptotics of $\kappa_{p,\sigma}$  and $u$, cf. \eqref{eq:kappaAsymp}, we continue}
			&\gtrsim h(x)x^{-1}\left[
			\sum_{y>x} \left(\left(\tfrac yx-1\right)^{-1-p\sigma} 
						+ \left(\tfrac yx+1\right)^{-1-p\sigma}\right)
					\left(1-\left(\tfrac yx\right)^{-1+q\alpha}\right)^{p-1}\right. \\ 
				&\qquad- \left.\sum_{0<y<x} \left(\left(1-\tfrac yx\right)^{-1-p\sigma} 
						+ \left(1+\tfrac yx\right)^{-1-p\sigma}\right)
					\left(\left(\tfrac yx\right)^{-1+q\alpha}-1\right)^{p-1}\right] 
		\intertext{where we factored out 
			\[ h(x) = c_{p,\sigma} c_{q,-\alpha}^{p-1} x^{-p\sigma+(p-1)(-1+q\alpha)} \] 
		with $c_{p,\sigma}$ and $c_{q,-\alpha}$ from Equation~(\ref{eq:kappa constant}). Estimating the sums by integrals, we have}
			\cdots&\gtrsim h(x)x^{-1}\left[
			\int_x^\infty \left(\left(\tfrac yx-1\right)^{-1-p\sigma} 
						+ \left(\tfrac yx+1\right)^{-1-p\sigma}\right)
					\left(1-\left(\tfrac yx\right)^{-1+q\alpha}\right)^{p-1}dy\right. \\ 
				&\qquad- \left.\int_0^x \left(\left(1-\tfrac yx\right)^{-1-p\sigma} 
						+ \left(1+\tfrac yx\right)^{-1-p\sigma}\right)
					\left(\left(\tfrac yx\right)^{-1+q\alpha}-1\right)^{p-1}dy\right] 
		\intertext{now substituting $t=\frac xy$ and $s=\frac yx$, we obtain} 
			\cdots&\gtrsim h(x)\left[
			\int_0^1 \left(\left(t^{-1}-1\right)^{-1-p\sigma} 
						+ \left(t^{-1}+1\right)^{-1-p\sigma}\right)
					\left(1-t^{1-q\alpha}\right)^{p-1}t^{-2}dt\right. \\ 
				&\qquad- \left.\int_0^1 \left((1-s)^{-1-p\sigma} 
						+ (1+s)^{-1-p\sigma}\right)(s^{-1+q\alpha}-1)^{p-1}ds\right] \\ 
				&= h(x) \int_0^1 \left((1-t)^{-1-p\sigma} + (1+t)^{-1-p\sigma}\right) \\ 
					&\qquad\qquad\qquad\cdot
						\left(t^{1+p\sigma+(p-1)(1-q\alpha)-2}-1\right) 
						\left(t^{-1+q\alpha}-1\right)^{p-1} dt. 
		\end{align*}
		Denoting this integral by $I$, we have 
			\[ I>0 
				\iff 1+p\sigma+(p-1)(1-q\alpha)-2<0 
				\iff \alpha>\frac{p-2+p\sigma}{(p-1)q}. \] 
		Now, using again the asymptotics for large $x$, we have 
		\begin{align*}
			\Delta_p^\sigma u(x) 
				&\gtrsim h(x)I - \kappa_{p,\sigma}(x)(\nabla_{0,x} u)^{p-1} 
				\gtrsim h(x)\left(I-x^{-1}\left(\frac{u(0)}{u(x)}-1\right)^{p-1}\right) \\
				&\gtrsim h(x)(I-x^{-1+(p-1)(1-q\alpha)}) 
				\geq h(x)(I-x^{-p\sigma}) 
		\end{align*}
		where we used once more the assumption on $\alpha$ in the last step. Hence, for sufficiently large $x$, we have that $\Delta_p^\sigma u(x) > 0$. 
		\end{proof}
	
	Now we show that our candidate is actually positive at the origin. 
	\begin{lemma}[Superharmonicity at $0$]\label{lem:superharmonicat0}
		Let $p,q>1$, $\sigma\in(0,\frac1p)$ and $\alpha\in(0,\frac1q)$. Then, $\Delta_p^\sigma\kappa_{q,-\alpha}(0) >0$.
	\end{lemma}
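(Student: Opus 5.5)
We want to show that $\Delta_p^\sigma u(0)>0$ for $u=\kappa_{q,-\alpha}$. By definition,
\[ \Delta_p^\sigma u(0)=\sum_{y\neq 0}\kappa_{p,\sigma}(y)\,\bigl(u(0)-u(y)\bigr)^{\langle p-1\rangle}. \]
Since $\kappa_{p,\sigma}(y)>0$ for every $y\neq0$, the plan is to show that $u$ attains a strict maximum at $0$, i.e. $u(0)>u(y)$ for all $y\neq 0$. Then every summand is strictly positive. The sum converges because $\kappa_{p,\sigma}$ is summable and $u$ is bounded; boundedness follows from \eqref{eq:kappaAsymp}, since $u(y)\sim c\abs{y}^{-1+q\alpha}\to0$. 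Hence $\Delta_p^\sigma u(0)>0$.

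To obtain the strict maximum, use the explicit formula \eqref{eq:kappa} with $\beta=-\alpha$:
\[ u(y)=C\,\frac{\Gamma(\abs{y}+\tfrac{q\alpha}{2})}{\Gamma(\abs{y}+1-\tfrac{q\alpha}{2})},\qquad C=\frac{4^{-q\alpha/2}\,\Gamma(\tfrac{1-q\alpha}{2})}{\sqrt{\pi}\,\abs{\Gamma(\tfrac{q\alpha}{2})}}>0. \]
The constant $C$ is positive because $1-q\alpha>0$. Set $t=q\alpha/2\in(0,1/2)$, which holds since $\alpha<1/q$. Lemma~\ref{lem:Gamma} then says that $y\mapsto \Gamma(y+t)/\Gamma(y+1-t)$ is strictly decreasing on $\NN_0$. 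Therefore $u(0)>u(\abs{y})=u(y)$ for all $y\neq0$.

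The only point that needs checking is whether \eqref{eq:kappa} is valid at $y=0$ for negative $\beta$. For $\beta=-\alpha<0$ the kernel $\kappa_{q,-\alpha}(0)$ is given by the integral definition, and the paper's convention $\kappa(x,x)=0$ applies only to $\beta>0$. So one must confirm that the closed form, which comes from \cite{CRSTV}, also holds at $x=0$, or else verify it directly. For the direct check, write $p_t(0,0)=e^{-2t}I_0(2t)$ and use the Mellin-type integral $\int_0^\infty e^{-2t}I_n(2t)\,t^{s-1}\,\dd t$, which gives exactly the Gamma ratio above with $n=0$. The integral converges at $0$ and at $\infty$ because $p_t(0,0)\sim (4\pi t)^{-1/2}$ and $t^{-1+t}$ is integrable near $\infty$ when $t<1/2$. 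This consistency check is the only potential obstacle; once it is settled, the lemma follows immediately from the monotonicity above.
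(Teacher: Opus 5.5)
Your proposal is correct and follows the paper's proof: both use Lemma~\ref{lem:Gamma} with $t=q\alpha/2\in(0,\frac12)$ to see that $\kappa_{q,-\alpha}$ has a strict maximum at $0$. Hence every summand $\kappa_{p,\sigma}(y)\bigl(\kappa_{q,-\alpha}(0)-\kappa_{q,-\alpha}(y)\bigr)^{\langle p-1\rangle}$, $y\neq0$, is positive, and your extra checks that the paper leaves implicit both hold: that \eqref{eq:kappa} is valid at $x=0$ for $\beta=-\alpha<0$ (via the Mellin transform of $e^{-2\tau}I_0(2\tau)$, convergent because $0<q\alpha<1$), and that the series is finite.
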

	\begin{proof}
		From Lemma~\ref{lem:Gamma}, we infer that $\kappa_{q,-\alpha}(y)$ is decreasing in $y$ for $y\geq0$. Hence, due to symmetry, $\kappa_{q,-\alpha}$ has a maximum at $0$. 
		It follows that $\nabla_{0,y}\kappa_{q,-\alpha}>0$ for all $y\neq0$, and thus $\Delta_p^\sigma\kappa_{q,-\alpha}(0) >0$.
	\end{proof}

In the next lemma, we compute the vertex degree of the graph induced by the weights $\kappa_{p,\sigma}$. 

\begin{lemma}[Vertex degree]\label{lem:vertex degree}
	Let $p>1$ and $\sigma\in(0,\frac1p)$. Then, the vertex degree of the graph $(\ZZ,\kappa_{p,\sigma})$ is 
		\[ \deg(x) = c_{p,\sigma} \frac{\Gamma(1-\frac{p\sigma}2)}{\frac{p\sigma}2\Gamma(1+\frac{p\sigma}2)} = \left(1+\frac2{p\sigma}\right) \kappa_{p,\sigma}(1) \] 
	for any $x\in\ZZ$. 
\end{lemma}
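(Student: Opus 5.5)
The plan is to compute the degree directly from the explicit formula \eqref{eq:kappa}, using a telescoping identity for the Gamma quotients.

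First, since $\kappa_{p,\sigma}(x,y)=\kappa_{p,\sigma}(x-y)$ and $\kappa_{p,\sigma}(0)=0$, we have
\[ \deg(x)=\sum_{y\neq x}\kappa_{p,\sigma}(x-y)=\sum_{n\neq 0}\kappa_{p,\sigma}(n)=2\sum_{n=1}^\infty \kappa_{p,\sigma}(n), \]
using $\kappa_{p,\sigma}(n)=\kappa_{p,\sigma}(-n)$. In particular $\deg(x)$ does not depend on $x$. Write $s:=\frac{p\sigma}{2}\in(0,\frac12)$. By \eqref{eq:kappa} and \eqref{eq:kappa constant}, $\kappa_{p,\sigma}(n)=c_{p,\sigma}\,\Gamma(n-s)/\Gamma(n+1+s)$ for $n\ge1$.

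The key step is the telescoping identity
\[ \frac{\Gamma(n-s)}{\Gamma(n+1+s)}=\frac1{2s}\left(\frac{\Gamma(n-s)}{\Gamma(n+s)}-\frac{\Gamma(n+1-s)}{\Gamma(n+1+s)}\right). \]
It follows from the functional equation $\Gamma(1+z)=z\Gamma(z)$. Indeed, $\frac{\Gamma(n+1-s)}{\Gamma(n+1+s)}=\frac{n-s}{n+s}\cdot\frac{\Gamma(n-s)}{\Gamma(n+s)}$, so the bracket equals $\frac{\Gamma(n-s)}{\Gamma(n+s)}\cdot\frac{2s}{n+s}=2s\,\frac{\Gamma(n-s)}{\Gamma(n+1+s)}$.

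Summing from $n=1$ to $N$, the series telescopes to $\frac1{2s}\bigl(\frac{\Gamma(1-s)}{\Gamma(1+s)}-\frac{\Gamma(N+1-s)}{\Gamma(N+1+s)}\bigr)$. The remainder term behaves like $N^{-2s}$ by the standard Gamma ratio asymptotics, so it tends to $0$ because $s>0$. This limit is the only analytic point in the argument, and it is routine. Hence
\[ \deg(x)=2c_{p,\sigma}\cdot\frac1{2s}\cdot\frac{\Gamma(1-s)}{\Gamma(1+s)}=c_{p,\sigma}\frac{\Gamma(1-\frac{p\sigma}2)}{\frac{p\sigma}2\Gamma(1+\frac{p\sigma}2)}, \]
which is the first expression in the statement.

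For the second expression, note that $\kappa_{p,\sigma}(1)=c_{p,\sigma}\Gamma(1-s)/\Gamma(2+s)=c_{p,\sigma}\Gamma(1-s)/((1+s)\Gamma(1+s))$. Therefore
\[ \deg(x)=\frac{1+s}{s}\,\kappa_{p,\sigma}(1)=\left(1+\frac2{p\sigma}\right)\kappa_{p,\sigma}(1). \]
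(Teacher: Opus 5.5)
Your proof is correct, but it takes a different route from the paper. The paper writes each quotient $\Gamma(|x|-s)/\Gamma(|x|+1+s)$ as $B(|x|-s,1+2s)/\Gamma(1+2s)$ and inserts the Euler integral for the Beta function. It then sums the geometric series $\sum_{x\ge 1}t^{x-1}$ under the integral, which gives $2B(1-s,2s)=2\Gamma(1-s)\Gamma(2s)/\Gamma(1+s)$. Finally, $\Gamma(1+2s)=2s\Gamma(2s)$ yields the closed form.

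You instead use the telescoping identity $h(n)-h(n+1)=2s\,\Gamma(n-s)/\Gamma(n+1+s)$ with $h(n)=\Gamma(n-s)/\Gamma(n+s)$. This needs only the functional equation and the decay $h(N)\sim N^{-2s}\to 0$. Your argument is more elementary: it avoids the integral representation and the exchange of sum and integral, which is harmless by Tonelli. As a by-product it also gives the exact tail sums $\sum_{n\ge N}\kappa_{p,\sigma}(n)=\frac{c_{p,\sigma}}{2s}h(N)$. The paper itself invokes exactly this telescoping identity later, in the proofs of Lemmas~\ref{lem:super_small_sigma} and~\ref{lem:super_optimal_plarge}, so your approach ties the degree computation to those estimates. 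The Beta-function route obtains the full sum in one step with no limiting remainder term, but it does not directly yield the partial or tail sums.
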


\begin{proof}
	Since $\kappa_{p,\sigma}$ is shift invariant, the degree is constant. We set ${s:=\frac{p\sigma}2}$. We then have 
		\[ \deg(0) 
			= c_{p,\sigma} \sum_{x\neq0} \frac{\Gamma(|x|-s)}{\Gamma(|x|+1+s)} 
			= \frac{c_{p,\sigma}}{\Gamma(1+2s)} \sum_{x\neq0} B(|x|-s,1+2s) \] 
	where $B$ denotes the Beta-function. Computing the sum, we obtain 
	\begin{align*}
		\sum_{x\neq0} B(|x|-s,1+2s) 
			&= 2 \int_0^1 \sum_{x\geq1} t^{x-s-1} (1-t)^{1+2s-1} dt \\ 
			&= 2 \int_0^1 t^{1-s-1} (1-t)^{2s-1} dt \\ 
			&= 2 B(1-s,2s) 
			= 2 \frac{\Gamma(1-s)\Gamma(2s)}{\Gamma(1+s)}. 
	\end{align*}
	We therefore have 
		\[ \deg(0) = c_{p,\sigma} \frac{\Gamma(1-s)}{s\Gamma(1+s)}
			= \left(1+\frac1s\right) \kappa_{p,\sigma}(1). \] 
	This finishes the proof. 
\end{proof}

Next we show that if $\alpha$ is too small, $\kappa_{q,-\alpha}$ is subharmonic at $\pm1$. By symmetry it suffices to consider $x=1$ in the following.

\begin{proposition}
	[Subharmonicity at $\pm 1$ for small $\alpha$]\label{prop:subharmonicat1}
	Fix $p,q>1$ and $\sigma\in(0,\frac1p)$. Then, $\Delta_p^\sigma \kappa_{q,-\alpha}(\pm1)\leq0$ if 
		\[ \alpha\leq
			\frac2{q\left(2+\left(1+\frac2{p\sigma}\right)^{\frac1{p-1}}\right)}. \] 
\end{proposition}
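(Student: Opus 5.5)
We evaluate $\Delta_p^\sigma u(1)$ directly for $u:=\kappa_{q,-\alpha}$. We show that the single negative term coming from the edge $\{1,0\}$ dominates all the other terms, which are non-negative. Set $t:=\frac{q\alpha}{2}\in(0,\frac12)$. By \eqref{eq:kappa}, $u(y)=C\,\frac{\Gamma(\abs{y}+t)}{\Gamma(\abs{y}+1-t)}$ for a positive constant $C$. In particular $u>0$ on $\ZZ$, and by Lemma~\ref{lem:Gamma} $u$ is strictly decreasing in $\abs{y}$. By symmetry it suffices to treat $x=1$.

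\textbf{Splitting the sum.} We write
\[ \Delta_p^\sigma u(1)=\kappa_{p,\sigma}(1)\p{u(1)-u(0)}+\sum_{y\neq 0,1}\kappa_{p,\sigma}(1-y)\p{u(1)-u(y)}. \]
The first term equals $-\kappa_{p,\sigma}(1)\,(u(0)-u(1))^{p-1}$, which is negative. In the remaining sum:
\begin{itemize}
\item the term $y=-1$ vanishes, since $u(-1)=u(1)$;
\item for $\abs{y}\geq2$, monotonicity and positivity of $u$ give $0<u(1)-u(y)<u(1)$.
\end{itemize}
Hence
\[ \Delta_p^\sigma u(1)\leq -\kappa_{p,\sigma}(1)(u(0)-u(1))^{p-1}+u(1)^{p-1}\sum_{\abs{y}\geq 2}\kappa_{p,\sigma}(1-y). \]

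\textbf{Bounding the tail by the degree.} The last sum is at most $\deg(1)$. Lemma~\ref{lem:vertex degree} gives $\deg(1)=\bigl(1+\frac{2}{p\sigma}\bigr)\kappa_{p,\sigma}(1)$. (One could subtract $\kappa_{p,\sigma}(1)+\kappa_{p,\sigma}(2)$ from the degree, but the crude bound already yields exactly the stated threshold.) Dividing by $\kappa_{p,\sigma}(1)u(1)^{p-1}>0$, it therefore suffices to show
\[ \frac{u(0)}{u(1)}-1\geq \Bigl(1+\frac{2}{p\sigma}\Bigr)^{\frac{1}{p-1}}=:A. \]

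\textbf{The Gamma-ratio step.} This is the only computation. The functional equation $\Gamma(1+z)=z\Gamma(z)$ gives
\[ \frac{u(0)}{u(1)}=\frac{\Gamma(t)\,\Gamma(2-t)}{\Gamma(1-t)\,\Gamma(1+t)}=\frac{1-t}{t}. \]
So the required inequality becomes $\frac{1}{t}-2\geq A$, that is, $t\leq\frac{1}{2+A}$. Since $t=\frac{q\alpha}{2}$, this is $\alpha\leq\frac{2}{q(2+A)}$, which is precisely the hypothesis. Hence $\Delta_p^\sigma u(\pm1)\leq 0$.

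I do not expect a real obstacle. The points to check carefully are:
\begin{itemize}
\item the sign conventions of the French power;
\item the fact that $u$ lies in $\FF_p$, so that the series converges. This holds because $u$ is bounded and the graph is locally summable.
\end{itemize}
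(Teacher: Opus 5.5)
Your proposal is correct and follows the paper's proof. You bound all edges other than $\{0,1\}$ by $\deg(1)\,\kappa_{q,-\alpha}(1)^{p-1}$, use $\deg(1)=\bigl(1+\frac{2}{p\sigma}\bigr)\kappa_{p,\sigma}(1)$ from Lemma~\ref{lem:vertex degree}, and reduce via the functional equation to $\bigl(1+\frac{2}{p\sigma}\bigr)^{1/(p-1)}\leq\frac1t-2$; the paper differs only in writing out $\kappa_{q,-\alpha}(0)$ and $\kappa_{q,-\alpha}(1)$ explicitly rather than their ratio.
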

\begin{proof}
	Without loss of generality, it suffices to consider $x=1$. The inequality $\Delta_p^\sigma \kappa_{q,-\alpha}(1)\leq0$ is equivalent to 
	\begin{align*}
		\sum_{y>1}(\kappa_{p,\sigma}(1-y)&+\kappa_{p,\sigma}(1+y)) \left(\kappa_{q,-\alpha}(1)-\kappa_{q,-\alpha}(y)\right)^{p-1} \\ 
			&\quad\leq \kappa_{p,\sigma}(1) \left(\kappa_{q,-\alpha}(0)-\kappa_{q,-\alpha}(1)\right)^{p-1}.
	\end{align*}
	We set $t:=\frac{q\alpha}2\in(0,\frac12)$. The left-hand side is bounded by 
		\[ \deg(1) \kappa_{q,-\alpha}(1)^{p-1} 
			= \deg(1) \left(\frac{4^{-t}\Gamma(\frac12-t)}{\sqrt\pi \Gamma(1-t)} \frac t{1-t}\right)^{p-1}, \] 
	where we used the functional equation of the $\Gamma$-function, whereas the right-hand side is equal to 
		\[ \kappa_{p,\sigma}(1)\left(\frac{4^{-t}\Gamma(\frac12-t)}{\sqrt\pi \Gamma(1-t)} \frac{1-2t}{1-t}\right)^{p-1}. \] 
	Rearranging shows that $\Delta_p^\sigma \kappa_{q,-\alpha}(1)\leq0$ if 
		\[ \left(\frac{\deg(1)}{\kappa_{p,\sigma}(1)}\right)^{\frac1{p-1}} 
			= \left(1+\frac2{p\sigma}\right)^{\frac1{p-1}} \leq \frac1t-2 \] 
	where the first equality is due to Lemma~\ref{lem:vertex degree}. Substituting $t=\frac{q\alpha}2$ and solving for $\alpha$ gives the claim. 
\end{proof}

\begin{remark}
	Note that optimal Hardy-type weights come from the choice $p(\sigma+1-q\alpha)=1$, i.e., $t=(\sigma+1-1/p)/2$. But then $\frac1t-2<\frac2{p-1}$, whereas $1+\frac2{p\sigma}>3$. It is not hard to see that $3^{\frac1{p-1}} > \frac2{p-1}$. 
	
	Moreover, in the case $p=q=2$, it is shown in \cite[Proposition~2]{KN} that $\kappa_{2,-\alpha}$ is $\Delta_2^\sigma$-superharmonic if $0<\sigma \leq \alpha$, which also indicates that $\alpha$ should not become too small relative to $\sigma$. 
	
	In Lemma~\ref{lem:sub_optimal}, we show that even for optimal parameter choices, the Hardy-type weight can become negative at $\pm1$. An alternative proof is given in the appendix.
\end{remark}

\begin{lemma}
	[Superharmonicity at $\pm 1$ for large $\alpha$ or small $p$]\label{lem:superharmonicat1_p}
	Fix $q>1$ and $\alpha\in  (0, \frac1q)$. Set $s=\frac{p\sigma}{2}$.
	If 	\[\alpha>\frac{4}
	{q\left(
		1+
		\left[
		1+\frac{(2-s)(1-s)}{(3+s)(2+s)}
		\right]^{\frac{1}{p-1}}
		\right)},\]
		then $\Delta_p^\sigma\kappa_{q,-\alpha}(\pm1) >0$. In particular, there exist $1<p_0<\infty$ and $0\leq\sigma_0<\frac{1}{p_0}$ such that for all $p \in (1,p_0)$ and $\sigma\in(\sigma_0,\frac1p)$ we have  $\Delta_p^\sigma\kappa_{q,-\alpha}(\pm1) >0$.
\end{lemma}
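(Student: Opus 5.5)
The plan is to bound $\Delta_p^\sigma\kappa_{q,-\alpha}(1)$ from below by keeping only the single negative term and one positive term, and then to evaluate the resulting ratios exactly using the functional equation of $\Gamma$. By symmetry it suffices to treat $x=1$. Write $u=\kappa_{q,-\alpha}$, $t:=\frac{q\alpha}2\in(0,\frac12)$ and $s=\frac{p\sigma}2\in(0,\frac12)$.

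\emph{Step 1: isolate the terms.} By Lemma~\ref{lem:Gamma} (as in Lemma~\ref{lem:superharmonicat0}), $u$ is even and strictly decreasing in $|y|$. Hence in
\[ \Delta_p^\sigma u(1)=\sum_{y\neq 1}\kappa_{p,\sigma}(1-y)\p{u(1)-u(y)} \]
the only negative contribution comes from $y=0$, namely $-\kappa_{p,\sigma}(1)(u(0)-u(1))^{p-1}$. The term $y=-1$ vanishes, and all terms with $|y|\geq 2$ are positive. Grouping $y$ and $-y$ for $y\ge 2$ as in Proposition~\ref{prop:subharmonicat1}, I discard every positive term except $y=\pm2$. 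This gives
\[ \Delta_p^\sigma u(1)\geq (\kappa_{p,\sigma}(1)+\kappa_{p,\sigma}(3))(u(1)-u(2))^{p-1}-\kappa_{p,\sigma}(1)(u(0)-u(1))^{p-1}, \]
with strict inequality because the discarded terms are positive.

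\emph{Step 2: explicit ratios.} From \eqref{eq:kappa} and $\Gamma(1+z)=z\Gamma(z)$,
\[ \frac{\kappa_{p,\sigma}(3)}{\kappa_{p,\sigma}(1)}=\frac{\Gamma(3-s)\Gamma(2+s)}{\Gamma(4+s)\Gamma(1-s)}=\frac{(2-s)(1-s)}{(3+s)(2+s)}=:r. \]
Similarly $u(1)/u(0)=\frac{t}{1-t}$ and $u(2)/u(1)=\frac{1+t}{2-t}$. From these, $u(0)-u(1)=u(0)\frac{1-2t}{1-t}$ and $u(1)-u(2)=u(0)\frac{t(1-2t)}{(1-t)(2-t)}$, so
\[ \frac{u(1)-u(2)}{u(0)-u(1)}=\frac{t}{2-t}. \]
Therefore $\Delta_p^\sigma u(1)>0$ as soon as $(1+r)\left(\frac{t}{2-t}\right)^{p-1}\geq1$. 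Setting $A:=(1+r)^{1/(p-1)}$, this reads $t(1+A)\ge 2$, i.e.\ $\alpha\geq \frac{4}{q(1+A)}$, which is implied by the hypothesis.

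\emph{Step 3: the "in particular" part.} Since $s\in(0,\frac12)$ and each factor of $r$ is monotone in $s$, $r$ is decreasing in $s$. Hence $r\geq r_{\min}:=\frac{(3/2)(1/2)}{(7/2)(5/2)}>0$ uniformly in $\sigma$, and so $A\geq (1+r_{\min})^{1/(p-1)}\to\infty$ as $p\downarrow1$. Given $\alpha\in(0,\frac1q)$, choose $p_0>1$ so that $\frac{4}{q(1+(1+r_{\min})^{1/(p-1)})}<\alpha$ for all $p<p_0$. Then $\sigma_0=0$ works.

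The only delicate point is choosing which positive terms to retain. Keeping just $y=\pm2$ is precisely what produces the constant $\frac{(2-s)(1-s)}{(3+s)(2+s)}$ in the statement, so I expect no real obstacle beyond carefully checking the Gamma-function identities.
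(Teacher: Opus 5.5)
Your proposal is correct and follows the paper's proof: keep only the $y=\pm2$ terms (the paper's $S_2$), compare them with the single negative $y=0$ term $S_0$, and use the functional equation to reduce the claim to $1+\frac{(2-s)(1-s)}{(3+s)(2+s)}>\left(\frac{2-t}{t}\right)^{p-1}$. Your uniform bound $r\geq\frac{3}{35}$ on $s\in(0,\frac12)$ makes the ``in particular'' part slightly more explicit than in the paper, since it shows that $\sigma_0=0$ works.
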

\begin{proof}
		Set 
	\[ s:=\frac{p\sigma}2, \qquad t:=\frac{q\alpha}2.\]
	Without loss of generality, let $x=1$. Moreover, $\Delta_p^\sigma\kappa_{q,-\alpha}(1) >0$ is equivalent to 
	\[  S_{\infty}+S_2> S_0,\]
	where
	\begin{align*}
		S_0&:=\frac{\Gamma(1-s)}{\Gamma(2+s)}\left(\frac{\Gamma(t)}{\Gamma(1-t)}- \frac{\Gamma(1+t)}{\Gamma(2-t)}\right)^{p-1}, \\
		S_2&:= \left(\frac{\Gamma(1-s)}{\Gamma(2+s)}+ \frac{\Gamma(3-s)}{\Gamma(4+s)}\right) 
		\left(\frac{\Gamma(1+t)}{\Gamma(2-t)}- \frac{\Gamma(2+t)}{\Gamma(3-t)}\right)^{p-1}\\
		S_{\infty}&:=\sum_{y>2} \left(\frac{\Gamma(y-1-s)}{\Gamma(y+s)}+ \frac{\Gamma(y+1-s)}{\Gamma(y+2+s)}\right) 
		\left(\frac{\Gamma(1+t)}{\Gamma(2-t)}- \frac{\Gamma(y+t)}{\Gamma(y+1-t)}\right)^{p-1}.
	\end{align*}
	Hence, it is enough to find a condition under which $S_2>S_0$. Using the functional equation of the $\Gamma$-function, we obtain
	\begin{align*}
		S_0&=\frac{\Gamma(1-s)}{\Gamma(2+s)}\left(\frac{\Gamma(1+t)}{\Gamma(2-t)}\cdot \frac{1-2t}{t}\right)^{p-1},\\
		S_2&=\frac{\Gamma(1-s)}{\Gamma(2+s)}\left(1+ \frac{(2-s)(1-s)}{(3+s)(2+s)}\right)\left(\frac{\Gamma(1+t)}{\Gamma(2-t)}\cdot \frac{1-2t}{2-t}\right)^{p-1}.
	\end{align*}
	Thus, we need to show
	\[ 1+ \frac{(2-s)(1-s)}{(3+s)(2+s)} > \left(\frac{2-t}{t}\right)^{p-1}. \]
	As $p\to 1$, the right-hand side becomes $1$ whereas the left-hand side is always larger than $1$ with equality at $s=1$ and a maximum at $s=0$. As long as $p\neq1$, the right-hand side has a singularity at $t=0$. Thus, in general, we can only assert the existence of $p_0$ and $\sigma_0$ as in the statement. Alternatively, one obtains an explicit condition when $\alpha$ is not too small. The latter follows from the equivalent inequality
	\[\alpha>\frac{4}
		{q\left(
			1+
			\left[
			1+\frac{(2-s)(1-s)}{(3+s)(2+s)}
			\right]^{\frac{1}{p-1}}
			\right)}.\]
	This finishes the proof. 
\end{proof}
Using another observation, we also obtain the following.

\begin{lemma}
	[Superharmonicity at $\pm 1$ for all $p$ and small $\sigma$]\label{lem:super_small_sigma}
	Fix $p,q>1$ and $\alpha\in  (0, \frac1q)$, and set $s=\frac{p\sigma}{2}$. If 
		\[\alpha>
	\frac{4}{q\left(1+
		\left(\frac{s^2+2}{s(2+s)}\right)^{\frac{1}{p-1}}
		\right)},\]
		then $\Delta_p^\sigma\kappa_{q,-\alpha}(\pm1) >0$.
	 In particular, there exists $\sigma_0\leq\frac{1}{p}$ such that for all $\sigma\in(0,\sigma_0)$ we have  $\Delta_p^\sigma\kappa_{q,-\alpha}(\pm1) >0$. Furthermore, if $\alpha=\frac{p-1+p\sigma}{pq}$ and $p<3-\frac{\log5}{\log3}\approx1.535$, then $\Delta_p^\sigma\kappa_{q,-\alpha}(\pm1)>0$ for any $\sigma$. 
\end{lemma}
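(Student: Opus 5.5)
The plan is to reuse the decomposition from the proof of Lemma~\ref{lem:superharmonicat1_p}, but to bound \emph{all} the terms with $y\notin\set{0,1}$ from below at once, instead of keeping only $S_2$. By symmetry it suffices to treat $x=1$. Set $s=\frac{p\sigma}2$, $t=\frac{q\alpha}2$ and $u=\kappa_{q,-\alpha}$.

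\textbf{Step 1: lower bound for the sum over $y\notin\set{0,1}$.} By Lemma~\ref{lem:Gamma}, $u$ is even and strictly decreasing in $\abs{y}$. Hence $\nabla_{1,y}u\geq u(1)-u(2)>0$ for every $\abs{y}\geq2$, while the term $y=-1$ vanishes because $u(-1)=u(1)$. Consequently
\[ \sum_{y\notin\set{0,1}}\kappa_{p,\sigma}(1-y)\p{\nabla_{1,y}u}\geq\bigl(\deg(1)-\kappa_{p,\sigma}(1)-\kappa_{p,\sigma}(2)\bigr)\bigl(u(1)-u(2)\bigr)^{p-1}. \]
By Lemma~\ref{lem:vertex degree}, $\deg(1)-\kappa_{p,\sigma}(1)=\kappa_{p,\sigma}(1)/s$. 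The functional equation applied to \eqref{eq:kappa} gives $\kappa_{p,\sigma}(2)=\kappa_{p,\sigma}(1)\frac{1-s}{2+s}$. The total remaining weight is therefore
\[ \kappa_{p,\sigma}(1)\Bigl(\frac1s-\frac{1-s}{2+s}\Bigr)=\kappa_{p,\sigma}(1)\,\frac{s^2+2}{s(2+s)}=:\kappa_{p,\sigma}(1)R(s). \]

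\textbf{Step 2: comparison with the $y=0$ term.} The negative contribution is $\kappa_{p,\sigma}(1)\bigl(u(0)-u(1)\bigr)^{p-1}$. As in the computation of $S_0,S_2$ above, the functional equation gives
\[ u(0)-u(1)=u(1)\,\frac{1-2t}{t},\qquad u(1)-u(2)=u(1)\,\frac{1-2t}{2-t}. \]
So $\Delta_p^\sigma u(1)>0$ holds as soon as $R(s)>\bigl(\frac{2-t}{t}\bigr)^{p-1}$. Solving for $t$ gives $t>2/(1+R(s)^{1/(p-1)})$, and substituting $t=\frac{q\alpha}{2}$ yields exactly the stated bound on $\alpha$. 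For the existence of $\sigma_0$, note that $R(s)\to\infty$ as $s\to0$, so the right-hand side of the bound tends to $0$ and is eventually below the fixed $\alpha$.

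\textbf{Step 3: the optimal parameter.} For $\alpha=\frac{p-1+p\sigma}{pq}$ we have $t=\frac{p-1+2s}{2p}$, and we must verify
\[ R(s)^{\frac1{p-1}}>\frac{2-t}{t}\quad\text{for all } s\in(0,\tfrac12) \text{ and } p<3-\tfrac{\log5}{\log3}. \]
I expect this to be the main obstacle: it is a two-variable inequality in which the two sides move in competing directions as $s$ varies. My plan is as follows.
\begin{enumerate}
	\item Use $R'(s)=-s^{-2}+3(2+s)^{-2}<0$ on $(0,\tfrac12)$ to get $R(s)\geq R(\tfrac12)=\tfrac95$.
	\item Bound $\frac{2-t}{t}$ from above separately in the regimes $s$ small and $s$ near $\tfrac12$. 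For small $s$, $R$ is huge, which compensates for $t$ being close to $\frac{p-1}{2p}$.
	\item In each regime, reduce to a one-variable inequality in $p$. Via monotonicity and logarithms, this should reduce to a condition of the form $(p-1)\log3<\ldots$, whose threshold yields $p<3-\frac{\log5}{\log3}$.
\end{enumerate}
If this splitting does not give the sharp constant, I would instead treat $F(s,p)=\frac{1}{p-1}\log R(s)-\log\frac{2-t}{t}$ directly. I would show that $F$ is monotone in $p$, and then minimize $F$ in $s$ at the threshold value of $p$.
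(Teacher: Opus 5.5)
Your Steps 1 and 2 are correct and follow the paper's route. The paper also bounds every term with $\abs{y}\geq 2$ from below by $(u(1)-u(2))^{p-1}$ times its weight. It then evaluates the total weight through the telescoping identity $h(y)-h(y+1)=2s\,\Gamma(y-s)/\Gamma(y+1+s)$. Your shortcut via Lemma~\ref{lem:vertex degree}, namely $\deg(1)-\kappa_{p,\sigma}(1)-\kappa_{p,\sigma}(2)=\kappa_{p,\sigma}(1)\frac{s^2+2}{s(2+s)}$, is the same computation done more directly. So the explicit condition on $\alpha$ and the existence of $\sigma_0$ are fine.

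The gap is Step 3: the \emph{Furthermore} clause is not proved, and your leading plan cannot prove it. With $t=\frac{p-1+2s}{2p}$ one has $\frac{2-t}{t}=\frac{3p+1-2s}{p-1+2s}$. At $s=\frac12$ the required inequality reads $\frac95>3^{p-1}$, which becomes an equality as $p\uparrow p^*:=3-\frac{\log5}{\log3}$. On any regime $[s_1,\frac12)$ with $s_1<\frac12$, constant bounds give at best $R\geq\frac95$ against $\sup\frac{2-t}{t}=\frac{3p+1-2s_1}{p-1+2s_1}>3$. So every fixed finite splitting produces a condition that already fails at $p^*$, hence a threshold strictly below $p^*$. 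Note also that the two sides do not move in competing directions: both decrease in $s$, and the question is which decreases faster. Your fallback only names the hard part (``minimize $F$ in $s$ at the threshold'') without carrying it out.

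The missing idea, which the paper uses via convexity and a comparison of derivatives at $s=\frac12$, is that $s=\frac12$ is the worst case. One rigorous way to establish this is to show that $H(s)=\log R(s)-(p-1)\log\frac{3p+1-2s}{p-1+2s}$ is decreasing on $(0,\frac12]$, i.e.
\[ \frac{2(s+1)}{s(s+2)}-\frac{2s}{s^2+2}>(p-1)\left(\frac{2}{3p+1-2s}+\frac{2}{p-1+2s}\right). \]
\begin{itemize}
\item The left side is decreasing in $s$. It equals $\frac{88}{45}$ at $s=\frac12$ and exceeds $2.28$ for $s\leq 0.44$.
\item For $p-1<0.536$, the right side is below $2.24$ everywhere and below $1$ for $s\geq0.44$.
\end{itemize}
Hence $H(s)>H(\frac12)=\log\frac95-(p-1)\log3$, which is positive exactly when $p<p^*$.
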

\begin{proof}
	We proceed as in the previous proof. Observe the following. Writing $h(y) = \Gamma(y-s)/\Gamma(y+s)$, we have 
	\[ h(y) - h(y+1) = 2s \frac{\Gamma(y-s)}{\Gamma(y+1+s)}. \] 
	This observation together with Lemma~\ref{lem:Gamma} allows us to estimate $S_\infty$ from below to obtain 
	\begin{align*}
		S_\infty 
			&\geq \left(\frac{\Gamma(1+t)}{\Gamma(2-t)}- \frac{\Gamma(2+t)}{\Gamma(3-t)}\right)^{p-1} \\
				&\quad\qquad\cdot\sum_{y>2} \frac1{2s}\left(h(y-1)-h(y)+h(y+1)-h(y+2)\right) \\ 
			&= \left(\frac{\Gamma(1+t)}{\Gamma(2-t)}\cdot \frac{1-2t}{2-t}\right)^{p-1} \frac1{2s} (h(2)+h(4)) \\
			&= \frac{\Gamma(1-s)}{\Gamma(2+s)}\cdot\frac{1-s}{2s}\left(1+ \frac{(3-s)(2-s)}{(3+s)(2+s)}\right)
			\left(\frac{\Gamma(1+t)}{\Gamma(2-t)}\cdot \frac{1-2t}{2-t}\right)^{p-1}.
	\end{align*}
	Thus, we need to show
	\[ \frac{1-s}{2s}\left(1+ \frac{(3-s)(2-s)}{(3+s)(2+s)}\right) + 1+ \frac{(2-s)(1-s)}{(3+s)(2+s)} > \left(\frac{2-t}{t}\right)^{p-1} \]
	which is equivalent to 
	\[ \frac{1+s}{2s}\left(1+ \frac{(2-s)(1-s)}{(2+s)(1+s)}\right)=\frac{s^2+2}{s(2+s)}  > \left(\frac{2-t}{t}\right)^{p-1}. \]
	Equivalently, in terms of $\alpha$, 
	\[\alpha>
		\frac{4}{q\left(1+
			\left(\frac{s^2+2}{s(2+s)}\right)^{\frac{1}{p-1}}
			\right)}.\]
	Since $p,q$ and $\alpha$ are fixed, the right-hand side is bounded, but the left-hand side is unbounded as $s\to0$. Thus, the claim follows. 
	
	For the ``furthermore'' statement, notice that the left- and right-hand side are convex and decreasing in $s$. Since the derivative of the right-hand side at $s=\frac12$ is larger than that of the left-hand side, we only need to verify the inequality at $s=t=\frac12$: 
	\[ \frac95 > 3^{p-1} \iff p < 3 - \frac{\log5}{\log3}. \] 
	This finishes the proof. 
\end{proof}

\begin{lemma}
	[Superharmonicity at $\pm 1$ for large $p$ and optimal $\alpha$]\label{lem:super_optimal_plarge}
	Let $p,q>1$, $\sigma\in(0,\frac1p)$ and $\alpha=\frac{p-1+p\sigma}{pq}$. Then, there exists $p_0\geq2$ such that for all $p>p_0$ we have  $\Delta_p^\sigma\kappa_{q,-\alpha}(\pm1) >0$. 
\end{lemma}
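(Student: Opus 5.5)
The plan is to follow the proofs of Lemma~\ref{lem:superharmonicat1_p} and Lemma~\ref{lem:super_small_sigma}. By symmetry we take $x=1$ and use the same splitting: with $s=\frac{p\sigma}2$ and $t=\frac{q\alpha}2$, the claim $\Delta_p^\sigma\kappa_{q,-\alpha}(1)>0$ is equivalent to $S_\infty+S_2>S_0$. Here the comparison $S_2>S_0$ used there is useless, because $S_0/S_2\approx\left(\frac{2-t}{t}\right)^{p-1}$ grows like $3^{p-1}$. Instead we exploit that $S_0$ is tiny for the optimal $\alpha$. Put
\[ \delta:=1-2t=\frac{1-p\sigma}{p}\in\left(0,\tfrac1p\right). \]
By the functional equation,
\[ S_0=\frac{\Gamma(1-s)}{\Gamma(2+s)}\,\kappa_1^{p-1}\left(\frac{\delta}{t}\right)^{p-1},\qquad \kappa_1:=\frac{\Gamma(1+t)}{\Gamma(2-t)}. \]
Since $t\geq\frac14$ for $p\ge2$, this gives $S_0\le C\,\kappa_1^{p-1}(4\delta)^{p-1}$ with $C$ independent of $p$ and $\sigma$, because $s\in(0,\frac12)$.

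For the lower bound on $S_\infty$, Lemma~\ref{lem:Gamma} shows that every summand is non-negative. So it suffices to keep one block of terms, $y\in[Y,2Y]$ with $Y:=\lceil e^{p}\rceil$. For such $y$ we telescope with the functional equation:
\[ \frac{\Gamma(y+t)}{\Gamma(y+1-t)}\Big/\kappa_1=\prod_{k=1}^{y-1}\frac{k+t}{k+1-t}=\prod_{k=1}^{y-1}\left(1-\frac{\delta}{k+1-t}\right)\le \exp\Bigl(-\delta\sum_{k=2}^{y}\tfrac1k\Bigr)\le e^{-\delta(\log y-1)}. \]
Because $\delta\log y\le \frac{1}{p}(p+\log 2+1)\lesssim 1$, the elementary bound $1-e^{-a}\ge c\,a$ for $a\in[0,2]$ yields
\[ \kappa_1-\frac{\Gamma(y+t)}{\Gamma(y+1-t)}\ge c\,\kappa_1\,\delta\log y\ge c\,\kappa_1\,\delta\, p. \]
Together with \eqref{eq:kappaAsymp}, or directly with Lemma~\ref{lem:Gamma}, we have $\frac{\Gamma(y-1-s)}{\Gamma(y+s)}\gtrsim y^{-1-2s}\ge y^{-2}$ with constants uniform in $s\in(0,\frac12)$. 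Summing over the block then gives
\[ S_\infty\gtrsim Y^{-1}\,\kappa_1^{p-1}(c\,\delta p)^{p-1}\gtrsim e^{-p}\,\kappa_1^{p-1}(c\,\delta p)^{p-1}. \]

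Comparing the two bounds, the common factor $(\kappa_1\delta)^{p-1}$ cancels; this cancellation is the key point, since it makes the estimate uniform in $\sigma$. We are left with showing $e^{-p}\left(\frac{cp}{4}\right)^{p-1}>C'$, which holds for all $p$ larger than some $p_0\geq2$ because $(cp/4)^{p-1}$ grows superexponentially. The main obstacle is to keep every constant ($c$, $C$, and the implied constants in the Gamma-ratio asymptotics) independent of both $p$ and $\sigma$, in particular as $p\sigma\to1$ where $\delta\to0$. I would first try to handle the Gamma-ratio bounds via the explicit product representation above, together with the Gautschi/Wendel inequality $\frac{\Gamma(y-1-s)}{\Gamma(y+s)}\ge (y+1)^{-1-2s}$, rather than the $O$-asymptotics, so that uniformity is explicit.
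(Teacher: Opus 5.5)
Your proof is correct, and its core is the same as the paper's. You split $\Delta_p^\sigma\kappa_{q,-\alpha}(1)$ into the $y=0$ term and the far terms. You telescope $\frac{\Gamma(y+t)}{\Gamma(y+1-t)}$ against $\kappa_1=\frac{\Gamma(1+t)}{\Gamma(2-t)}$, so that the factor $(\kappa_1\delta)^{p-1}$ cancels exactly; this is what removes the degeneration $\delta\to0$ as $p\sigma\to1$. Then a $(p-1)$-st power of a ratio larger than one wins.

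The execution differs in how the far terms are bounded and how uniformity in $\sigma$ is obtained.
\begin{itemize}
\item \emph{The paper} keeps the exact sum $A_y(t)$; your lower bound on $\kappa_1-\frac{\Gamma(y+t)}{\Gamma(y+1-t)}$ is a lower bound for $\kappa_1\delta A_y(t)$. It discards all $y<y_0$ for a fixed $y_0$ with $\frac12\sum_{j=1}^{y_0-1}\frac1{j+1/2}>1$. It sums the kernel tail exactly via $h(y)-h(y+1)=2s\frac{\Gamma(y-s)}{\Gamma(y+1+s)}$, and seeks uniformity in $\sigma$ through monotonicity and convexity in $s$, evaluating at $s=t=\frac12$. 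This gives exponential growth against a fixed constant and a nearly explicit threshold.
\item \emph{You} use $1-x\le e^{-x}$ and $1-e^{-a}\ge ca$ to get $tA_y(t)\gtrsim\log y\ge p$ on the $p$-dependent block $[e^p,2e^p]$, uniformly in $t\in[\frac14,\frac12)$. You need only pointwise Gautschi-type kernel bounds, so a superexponential gain $(cp)^{p-1}$ beats an $e^{-p}$ loss.
\end{itemize}

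Your constants are cruder, but your uniformity in $\sigma$ and $q$ is fully explicit, and that is worth having. In the paper's reduction one has to note that $t=\frac{p-1}{2p}+\frac sp$ increases with $s$. Hence $(tA_{y_0}(t))^{p-1}$ is smallest as $s\to0$, not at $s=\frac12$, and that factor should be evaluated at $t=\frac{p-1}{2p}$; this still tends to $\frac12$ as $p\to\infty$, so the conclusion survives.

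Minor points on your write-up:
\begin{itemize}
\item Passing from $\log y-1$ to $\log y$ costs a factor $2$, which is harmless since $\log y\ge p\ge2$.
\item The uniform bound $\frac{\Gamma(y-1-s)}{\Gamma(y+s)}\ge y^{-1-2s}$ comes from Gautschi's inequality (log-convexity of $\Gamma$). It does not follow from Lemma~\ref{lem:Gamma}, nor from the $O$-asymptotics, whose constants may depend on $s$.
\item A fixed block $[y_0,2y_0]$ with $y_0$ large would also have worked in your setup.
\end{itemize}
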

\begin{proof}
	First, notice that superharmonicity of $\kappa_{q,-\alpha}$ at $1$ is equivalent to showing that 
	\begin{align*}
		\sum_{y>1} &
		\left(\kappa_{p,\sigma}(y-1) + \kappa_{p,\sigma}(y+1)\right) 
		\left(\kappa_{q,-\alpha}(1) - \kappa_{q,-\alpha}(y)\right)^{p-1} \\ 
		&\qquad> \kappa_{p,\sigma}(1) 
		\left(\kappa_{q,-\alpha}(0) - \kappa_{q,-\alpha}(1)\right)^{p-1}.
	\end{align*}
	Dividing the left-hand side by the right-hand side, we obtain
		\[ T:= \frac{\displaystyle
			\sum_{y>1} \left(\frac{\Gamma(y-1-s)}{\Gamma(y+s)}+ \frac{\Gamma(y+1-s)}{\Gamma(y+2+s)}\right) 
			\left(\frac{\Gamma(1+t)}{\Gamma(2-t)}- \frac{\Gamma(y+t)}{\Gamma(y+1-t)}\right)^{p-1}}
			{\displaystyle \frac{\Gamma(1-s)}{\Gamma(2+s)} \left(\frac{\Gamma(t)}{\Gamma(1-t)} - \frac{\Gamma(1+t)}{\Gamma(2-t)}\right)^{p-1}}. \] 
	Our goal is to show that $T>1$ for large enough $p$. We will start by simplifying $T$ term by term. 
	
	Using the functional equation of the $\Gamma$-function and a telescoping argument, it follows that 
	\begin{align*}
		\frac{\Gamma(1+t)}{\Gamma(2-t)}&-\frac{\Gamma(y+t)}{\Gamma(y+1-t)} \\ 
			&= \frac{\Gamma(1+t)}{\Gamma(2-t)} \left(1-\prod_{k=1}^{y-1} \frac{k+t}{k+1-t}\right) \\ 
			&= \frac{\Gamma(1+t)}{\Gamma(2-t)} \frac{\displaystyle 
				\sum_{j=1}^{y-1} (1-2t) \left(\prod_{k=1}^{j-1}(k+1-t)\right) \left(\prod_{k=j+1}^{y-1}(k+t)\right)}
				{\displaystyle\prod_{k=1}^{y-1}(k+1-t)} \\ 
			&= \frac{\Gamma(1+t)}{\Gamma(2-t)} (1-2t) \sum_{j=1}^{y-1} \frac1{j+1-t} \prod_{k=j+1}^{y-1}\frac{k+t}{k+1-t}. 
	\end{align*}
	We denote the sum by $A_y(t)$. Observe that $A_y$ is increasing and convex in $t\in(0,\frac12)$ as a sum of products of positive increasing convex functions with the same monotonicity. 
	
	Now writing 
		\[ \frac{\Gamma(t)}{\Gamma(1-t)} - \frac{\Gamma(1+t)}{\Gamma(2-t)} 
			= \frac{\Gamma(1+t)}{\Gamma(2-t)} \frac{1-2t}t, \] 
	we can simplify 
	\begin{align*}\label{eq:T}
		T &= \sum_{y>1} \frac{\Gamma(2+s)}{\Gamma(1-s)} \left(\frac{\Gamma(y-1-s)}{\Gamma(y+s)}+ \frac{\Gamma(y+1-s)}{\Gamma(y+2+s)}\right) (tA_y(t))^{p-1} \\ 
		&\geq (tA_{y_0}(t))^{p-1} \sum_{y\geq y_0} \frac{\Gamma(2+s)}{\Gamma(1-s)} \left(\frac{\Gamma(y-1-s)}{\Gamma(y+s)}+ \frac{\Gamma(y+1-s)}{\Gamma(y+2+s)}\right). \nonumber
	\end{align*}
	With $h(y) = \Gamma(y-s)/\Gamma(y+s)$, the sum evaluates to 
		\[ \frac{\Gamma(2+s)}{\Gamma(1-s)} \frac{h(y_0-1)+h(y_0+1)}{2s} 
			= \frac{1-s}{2s} \left(\prod_{k=2}^{y_0-2} \frac{k-s}{k+s} + \prod_{k=2}^{y_0} \frac{k-s}{k+s}\right). \] 
	With the same arguments as before, the latter is a convex decreasing function in $s\in(0,\frac12)$ with a singularity at $s=0$. 
	
	Observing that the choice of the optimal parameter for $\alpha$ makes $t$ an affine function of $s$, we find that $T$ is bounded from below by a positive decreasing convex function of $s$ which hence attains its minimal value at $s=\frac12$. We therefore obtain 
		\[ T\geq \left(\frac12 \sum_{j=1}^{y_0-1} \frac1{j+\frac12}\right)^{p-1} \frac12 \left(\frac{\frac32}{y_0+\frac32} + \frac{\frac32}{y_0+\frac12}\right). \]
	Choosing $y_0$ sufficiently large, the harmonic sum will exceed $2$. Hence, for $p$ large enough, the right-hand side is greater than $1$. 
\end{proof}
We use the main idea from the previous lemma to obtain a result for subharmonicity.
\begin{lemma}
	[Subharmonicity at $\pm 1$ for $p=3$, large $\sigma$ and optimal~$\alpha$]\label{lem:sub_optimal}
	Let $p=3,q>1,\sigma\in(0,\frac1p)$ and $\alpha=\frac{p-1+p\sigma}{pq}$. Then, there exists $\sigma_0\in(0,1/p)$ such that for all $\sigma\in(\sigma_0,1/p)$ we have  $\Delta_p^\sigma\kappa_{q,-\alpha}(\pm1)<0$. 
\end{lemma}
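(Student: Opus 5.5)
The plan is to reuse the quotient $T$ from the proof of Lemma~\ref{lem:super_optimal_plarge}, now with $p=3$. Set $s=\frac{p\sigma}{2}=\frac{3\sigma}2$ and $t=\frac{q\alpha}2=\frac{2+3\sigma}{6}$. By the same computation as there, $\Delta_3^\sigma\kappa_{q,-\alpha}(1)<0$ is equivalent to $T(\sigma)<1$, where
\[ T(\sigma)=\sum_{y>1} \frac{\Gamma(2+s)}{\Gamma(1-s)}\left(\frac{\Gamma(y-1-s)}{\Gamma(y+s)}+\frac{\Gamma(y+1-s)}{\Gamma(y+2+s)}\right)\bigl(tA_y(t)\bigr)^{2}. \]
Here $A_y(t)=\sum_{j=1}^{y-1}\frac1{j+1-t}\prod_{k=j+1}^{y-1}\frac{k+t}{k+1-t}$ as before. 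All summands are nonnegative. As $\sigma\uparrow\frac13$ we have $s\uparrow\frac12$ and $t\uparrow\frac12$. The strategy is therefore:
\begin{enumerate}
\item show that $T(\sigma)\to T_0:=T|_{s=t=1/2}$;
\item show that $T_0<1$ by an explicit numerical estimate;
\item conclude by continuity that there is $\sigma_0$ with $T(\sigma)<1$ for $\sigma\in(\sigma_0,\frac13)$.
\end{enumerate}
The case $x=-1$ follows by symmetry.

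\textbf{Step 1 (the limit series).} At $s=t=\frac12$ every product in $A_y$ equals $1$, so $tA_y=\frac12 H_y$ with $H_y:=\sum_{j=1}^{y-1}\frac1{j+1/2}$. The kernel prefactor becomes $\frac{\Gamma(5/2)}{\Gamma(1/2)}=\frac34$. The Gamma quotients become $\frac1{(y-3/2)(y-1/2)}$ and $\frac1{(y+1/2)(y+3/2)}$. Hence
\[ T_0=\frac3{16}\sum_{y\ge2}\left(\frac1{(y-\frac32)(y-\frac12)}+\frac1{(y+\frac12)(y+\frac32)}\right)H_y^2, \]
which converges since $H_y\le \log(2y)$.

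\textbf{Step 2 (continuity).} For the convergence $T(\sigma)\to T_0$ I would use dominated convergence on the series, restricting to $\sigma\in(\frac29,\frac13)$, say, so that $s\ge\frac13$.
\begin{itemize}
\item By Lemma~\ref{lem:Gamma} and \eqref{eq:kappaAsymp}, the kernel terms are bounded by $C y^{-1-2s}\le Cy^{-5/3}$ uniformly in $\sigma$.
\item Since $\frac{k+t}{k+1-t}\le1$ for $t<\frac12$, we get $tA_y(t)\le \frac12\sum_{j<y}\frac1{j}\lesssim \log y$ uniformly.
\end{itemize}
Thus $Cy^{-5/3}\log^2 y$ is a summable dominant. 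Each summand is continuous in $\sigma$, being a finite product and sum of Gamma quotients at arguments bounded away from the poles.

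\textbf{Step 3 (the main obstacle: $T_0<1$).} This is a concrete numerical estimate, and I expect it to be the hard part. I would compute the first few terms exactly:
\begin{itemize}
\item $y=2$ gives about $0.121$;
\item $y=3$ gives about $0.070$;
\item $y=4$ gives about $0.053$.
\end{itemize}
So the first three terms sum to about $0.245$. For the tail $y\ge y_1$ (say $y_1=5$ or larger, if needed for slack), I would use the bracket bound $\le \frac{2}{(y-3/2)^2}$ together with $H_y\le \log(2y-1)-\log 3+\frac23$, or simply $H_y\le\log(2y)$. I would then compare the sum with $\int_{y_1-1}^\infty \frac{2\log^2(2x)}{(x-3/2)^2}\,dx$, which can be evaluated in closed form. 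The real risk is that these crude bounds lose too much, since the tail decays only like $\log^2 y/y^2$. If that happens, I would compute more initial terms exactly before switching to the integral bound. Heuristically the total is around $0.6$, so there should be room. Once $T_0<1$ is established, Step 2 yields $\sigma_0$, which finishes the proof.
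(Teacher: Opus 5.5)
Your architecture is the paper's: rewrite $\Delta_3^\sigma\kappa_{q,-\alpha}(1)<0$ as $T<1$, let $s,t\to\frac12$ (with $t=\frac{1+s}{3}$) by dominated convergence, and show that the limiting series
\[ T_0=\sum_{y\ge2}\Bigl(\frac{3}{(2y-3)(2y-1)}+\frac{3}{(2y+1)(2y+3)}\Bigr)\Bigl(\sum_{j=1}^{y-1}\frac1{2j+1}\Bigr)^2 \]
is below $1$. Steps 1 and 2 are fine. For the uniform kernel bound, use that $\frac{\Gamma(2+s)}{\Gamma(1-s)}\frac{\Gamma(y-1-s)}{\Gamma(y+s)}=\prod_{k=1}^{y-2}\frac{k-s}{k+1+s}$ is decreasing in $s$, rather than \eqref{eq:kappaAsymp}, which is not uniform in $\sigma$. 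Your first three terms ($\approx 0.245$) are correct.

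The gap is Step 3, which carries the whole content of the lemma and, as proposed, does not close. Your heuristic is off: $T_0\approx 0.85$, not $0.6$. So the slack is only about $0.15$, while the tail decays like $\log^2 y/y^2$. With the bounds you list, bracket $\le\frac{2}{(y-3/2)^2}$ and $H_y\le\log(2y)$, the tail from $y=5$ is $\frac38\sum_{y\ge5}\frac{\log^2(2y)}{(y-\frac32)^2}\approx 1.2$. Your integral comparison over $[4,\infty)$ is larger still. So the resulting bound exceeds $1$ by a wide margin. The fallback of computing more terms exactly would need roughly two dozen exact terms before these crude tail bounds become affordable.

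The missing ingredient is two sharper elementary estimates, which the paper uses.
\begin{enumerate}
\item By convexity of $1/x$, $\frac{1}{j+\frac12}\le\int_j^{j+1}\frac{dx}{x}$. Hence $H_y\le\log y$, i.e.\ $\sum_{j=1}^{y-1}\frac1{2j+1}\le\frac12\log y$, which gains a full $\log 2$ per term over $\log(2y)$.
\item For $y\ge5$ one has $\frac{3}{(2y-3)(2y-1)}+\frac{3}{(2y+1)(2y+3)}\le\frac{7}{4y^2}$, which avoids the loss coming from $(y-\frac32)^{-2}$.
\end{enumerate}
Then each tail term is at most $\frac{7}{16}\frac{\log^2 y}{y^2}$. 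Since $\log^2 y/y^2$ is decreasing on $(e,\infty)$, you get $\sum_{y\ge5}\frac{\log^2 y}{y^2}\le\int_4^\infty\frac{\log^2 y}{y^2}\,dy=\log^2 2+\log 2+\frac12$. Together with your first three terms this gives $T_0\le 0.245+0.733<1$. The paper makes this fully rational by bounding the first three terms by $\frac{21}{80}$ and using $\log 2\le\frac{25}{36}$, arriving at $\frac{103271}{103680}<1$. With that bound in hand, your Step 2 yields $\sigma_0$ as planned.
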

\begin{proof}
	We start with a simple observation leading to a rational bound for $\log2$. Using the series representation 
		\[ \log z = 2\sum_{k\geq0} \frac1{2k+1} 
								\left(\frac{z-1}{z+1}\right)^{2k+1} \] 
	for the logarithm valid on the positive real axis, we have 
		\[ \log2 
			\leq 2\left(\frac13 + \frac1{3^4} \sum_{k\geq0} \frac1{9^k}\right) 
			= \frac{25}{36}. \] 
	We now proceed as in the previous proof, noting that subharmonicity of $\kappa_{q,-\alpha}$ at $1$ is equivalent to showing that 
		\[ T:= \frac{\displaystyle
			\sum_{y>1} \left(\frac{\Gamma(y-1-s)}{\Gamma(y+s)}+ \frac{\Gamma(y+1-s)}{\Gamma(y+2+s)}\right) 
			\left(\frac{\Gamma(1+t)}{\Gamma(2-t)}- \frac{\Gamma(y+t)}{\Gamma(y+1-t)}\right)^2}
			{\displaystyle \frac{\Gamma(1-s)}{\Gamma(2+s)} \left(\frac{\Gamma(t)}{\Gamma(1-t)} - \frac{\Gamma(1+t)}{\Gamma(2-t)}\right)^2} \] 
	is less than 1 for large enough $\sigma$. To that end, we show that for the optimal choice of $\alpha$, the limit of $T$ as $\sigma\to\frac1p$ is less than 1. Observe that since $p=3$, the optimal choice of $\alpha$ makes $t=\frac{1+s}3$. Then, the limit $\sigma\to\frac13$ is equivalent to $s\to\frac12$ and $t\to\frac12$. With the notation from the previous proof, we have 
		\[ \lim_{t\to\frac12} tA_y(t) = \sum_{j=1}^{y-1} \frac1{2j+1} \leq \frac12 \log y \] 
	and  
	\begin{align*}
		\lim_{s\to\frac12} \frac{\Gamma(2+s)}{\Gamma(1-s)} 
				&\left(\frac{\Gamma(y-1-s)}{\Gamma(y+s)}+ \frac{\Gamma(y+1-s)}{\Gamma(y+2+s)}\right) \\
			&= \frac3{(2y-3)(2y-1)} + \frac3{(2y+1)(2y+3)} 
			\leq \frac7{4y^2}
	\end{align*}
	with the bound holding for $y\geq5$. Then, by dominated convergence 
	\begin{align*}
		\lim_{s\to\frac12} T 
			&= \sum_{y>1} 
				\left(\frac3{(2y-3)(2y-1)} + \frac3{(2y+1)(2y+3)}\right) 
				\left(\sum_{j=1}^{y-1} \frac1{2j+1}\right)^2. 
	\intertext{The first three terms can be bounded by $\frac{21}{80}$. We can thus estimate the limit further as}
			\cdots&\leq \frac{21}{80} 
				+ \frac7{16} \sum_{y\geq5} \left(\frac{\log y}y\right)^2 
			\leq \frac{21}{80} 
				+ \frac7{16} \int_4^\infty \left(\frac{\log y}y\right)^2 dy 
	\intertext{where we used the fact that the integrand is decreasing in $y$. Evaluating the elementary integral, we obtain}
			\cdots&= \frac{21}{80} 
				+ \frac7{16} \left(\log^2 2 + \log2 + \frac12\right) 
			\leq \frac{21}{80} + \frac7{16} \left(\left(\frac{25}{36}\right)^2 
						+ \frac{25}{36} + \frac12\right) \\ 
			&= \frac{103271}{103680} < 1
	\end{align*}
	where we used the observation from the beginning. 
\end{proof}
It is interesting to note that for optimal parameters, we obtain subharmonicity at $\pm 1$ for $p\approx 3$ and superharmonicity for almost all other values of $p$.

The preceding results show that it is difficult to determine the sign of our candidate for an optimal Hardy-type weight in a compact punctured set around the origin in general. Nevertheless, it would be very interesting to find explicit conditions under which $\kappa_{q,-\alpha}$ is $\Delta_p^\sigma$-superharmonic on the whole line $\ZZ$ (apart from $p=q=2$). 
	
\section{The Hardy-type Weight is Optimal}\label{s:optimal}
	
\subsection{Proof of Criticality}
	
	Recall that $p,q \in (1,\infty)$, $\sigma\in (0,\frac1p)$, as well as $\alpha\in(0,\frac1q)$. In this subsection, we also assume
	\[ p(\sigma+1-q\alpha)\geq1\quad \text{, i.e.,}\quad  \alpha\leq\frac{p-1+p\sigma}{pq}. \]
	
	In contrast to the proof of the linear case in \cite{KN}, one has to be more careful with the estimates.
	
	\begin{proof}[Proof of Theorem~\ref{thm:main}~\ref{thm:main_a}]
		In order to show criticality of the Hardy-type weight $w$, we present a sequence $(e_n)$ satisfying the properties in Lemma~\ref{lem:null-sequence}. Let $e_n\in C_c(\ZZ)$ be defined by $e_n(0)=1$ and 
			\[ e_n(x) = \left(1-\frac{\log|x|}{\log n}\right)_+ \] 
		otherwise. Clearly, $\supp(e_n) = \{x\in\ZZ\mid|x|<n\}$ and $e_n\to1$. Thus, criticality follows via Lemma~\ref{lem:null-sequence} once we have shown that $\E_{p,u}^\sigma(e_n)\to0$ where $u=\kappa_{q,-\alpha}$. Thanks to Corollary~\ref{cor:GSR}, we may pass to $\E_{p,u,1}^\sigma$ and $\E_{p,u,2}^\sigma$. We first show that $(e_n)$ is a null-sequence for $\E_{p,u,1}^\sigma$. This suffices to prove the claim for $p<2$. If $p\geq2$, we will additionally show that $(e_n)$ is a null-sequence for $\E_{p,u,2}^\sigma$ thereby proving the claim for all $p>1$. 

		We now turn our attention to $\E_{p,u,1}^\sigma(e_n)$. Using the asymptotics of $\kappa$ from Equation~(\ref{eq:kappaAsymp}) and the reverse triangle inequality, we have 
			\[ \E_{p,u,1}^\sigma(e_n) 
				\lesssim \sum_{1<y<x} (x-y)^{-1-p\sigma} (xy)^{\frac p2(-1+q\alpha)} 
					|\nabla_{x,y} e_n|^p. \] 
		Regarding the support of $e_n$, we split the sum into two parts that we denote by $S_1$ and $S_2$ corresponding to the summation over the sets $\{1<y<x<n\}$ and $\{1<y<n\leq x\}$. For $S_1$, we first estimate the summation over $x$ by an integral and substitute $t=\frac xy$. We obtain 
			\[ S_1 
				\lesssim \frac1{\log^p n} \sum_{1<y<n} y^{-p(\sigma+1-q\alpha)} 
					\int_1^\infty (t-1)^{-1-p\sigma} t^{-\frac p2 (1-q\alpha)} 
						\log^p t \,dt. \] 
		We show that the integral is finite. Since $p\sigma+\frac p2(1-q\alpha)>0$, we find $0<\ep<p\sigma+\frac p2(1-q\alpha)$. Now, let $t_\ep$ be such that $\log^p t < t^\ep$ for all $t>t_\ep$. Then, the integral can be bounded via 
			\[ \int_1^{t_\ep} (t-1)^{p-1-p\sigma} dt 
				+ \int_{t_\ep}^\infty t^{-1-p\sigma-\frac p2(1-q\alpha) + \ep} dt. \] 
		The first integral converges since $p(1-\sigma)>0$, and the second one by our choice of $\ep$. For $S_1$ we conclude that the sum converges for $p(\sigma+1-q\alpha)>1$ and has $\log$-asymptotics if $p(\sigma+1-q\alpha)=1$. In either case, $S_1$ tends to zero. 
		
		Next, we bound $S_2$. We again bound the sum over $x$ by an integral and make the change of variables $t=\frac xy$ yielding 
		\begin{align*}
			S_2 &\lesssim \frac1{\log^p n} \sum_{1<y<n} y^{-p(\sigma+1-q\alpha)} 
					\log^p \frac ny \int_{\frac ny}^\infty (t-1)^{-1-p\sigma} 
						t^{-\frac p2(1-q\alpha)} dt. 
		\intertext{If $y<\frac n2$, we bound the integral by $(\frac ny -1)^{-p\sigma-\frac p2(1-q\alpha)}$. Otherwise, it is bounded by $(\frac ny -1)^{-p\sigma}$. Furthermore, replacing the sum by an integral, we obtain} 
			\dots&\lesssim \frac{n^{-p(\sigma+1-q\alpha)}}{\log^p n} 
					\left[
						\int_1^{\frac n2} \left(\frac yn\right)^{-p(\sigma+1-q\alpha)} 
						\left(\frac ny -1\right)^{-p\sigma-\frac p2(1-q\alpha)} 
						\log^p\frac ny \,dy 
					\right. \\ 
					&\qquad\qquad\qquad\qquad+ 
					\left.
						\int_{\frac n2}^n \left(\frac yn\right)^{-p(\sigma+1-q\alpha)} 
						\left(\frac ny -1\right)^{-p\sigma} \log^p\frac ny \,dy 
					\right]. 
		\intertext{We now substitute $e^{-u} = \frac yn$ and we get} 
			\dots&= \frac{n^{1-p(\sigma+1-q\alpha)}}{\log^p n} 
					\left[
						\int_{\log 2}^{\log n} e^{(\frac p2(1-q\alpha)-1)u} 
						(1-e^{-u})^{-p\sigma-\frac p2(1-q\alpha)} u^p du 
					\right. \\ 
					&\qquad\qquad\qquad\qquad+ 
					\left.
						\int_0^{\log 2} e^{(p(1-q\alpha)-1)u} 
						\left(1-e^{-u}\right)^{-p\sigma} u^p du 
					\right] \\ 
				&\lesssim \frac{n^{1-p(\sigma+1-q\alpha)}}{\log^p n} 
					\left[
						\int_{\log 2}^{\log n} e^{(\frac p2(1-q\alpha)-1)u} u^p du 
					+
						\int_0^{\log 2} u^{p(1-\sigma)} du 
					\right]. 
		\intertext{The second integral is a constant. The first integral, depending on the sign of $\frac p2(1-q\alpha)-1$, is a constant or, via a supremum estimate, bounded by $n^{\frac p2(1-q\alpha)-1} \log^{p+1} n$. Thus,} 
			\dots&\lesssim \frac{n^{1-p(\sigma+1-q\alpha)}}{\log^p n} 
					\left[1 + n^{\frac p2(1-q\alpha)-1} \log^{p+1} n\right] \\ 
				&=\frac{n^{1-p(\sigma+1-q\alpha)}}{\log^p n} 
					+ n^{-\frac p2(2\sigma+1-q\alpha)} \log n 
		\end{align*}
		which tends to zero. 

		We now turn to $\E_{p,u,2}^\sigma$. In the following, we assume that $p\geq2$. We proceed in a similar manner as before. Using the asymptotics of $\kappa$ and the reverse triangle inequality, $\E_{p,u,2}^\sigma(e_n)$ is bounded by 
			\[ \sum_{1<y<x} (x-y)^{-1-p\sigma} (xy)^{-1+q\alpha} 
				\left(y^{-1+q\alpha} - x^{-1+q\alpha}\right)^{p-2} 
				(e_n(x)+e_n(y))^{p-2} |\nabla_{x,y} e_n|^2. \] 
		As before, we split this sum into two terms, $T_1$ and $T_2$, corresponding to the summation over the sets $\{1<y<x<n\}$ and $\{1<y<n\leq x\}$ respectively. We will first handle $T_1$. From $e_n\leq1$, we get 
			\[ T_1 \lesssim \sum_{1<y<x<n} (x-y)^{-1-p\sigma} (xy)^{-1+q\alpha} 
				\left(y^{-1+q\alpha} - x^{-1+q\alpha}\right)^{p-2} 
				|\nabla_{x,y} e_n|^2 \] 
		which we will split further into two parts, $T_{1,1}$ and $T_{1,2}$, according to whether $x>cy$ or $x<cy$ for some constant $c\gg1$ determined as follows. Since $p\sigma+1-q\alpha>0$, we can find $\ep>0$ such that $p\sigma+1-q\alpha-\ep>0$. Choose $c$ such that $\log^2 t < t^\ep$ for $t>c$. Let us now turn to $T_{1,1}$. Since $x>cy$, we can estimate 
		\begin{align*}
			T_{1,1} &\lesssim \sum_{1<y<cy<x<n} (x-y)^{-1-p\sigma} (xy)^{-1+q\alpha} 
						y^{-(p-2)(1-q\alpha)} |\nabla_{x,y} e_n|^2 \\ 
					&\lesssim \frac1{\log^2 n} \sum_{y<\frac nc} y^{-p(\sigma+1-q\alpha)} 
						\int_c^\infty t^{-(1-q\alpha)} (t-1)^{-1-p\sigma} \log^2 t \,dt 
		\end{align*}
		where we substituted $t=\frac xy$. By the choice of $\ep$ and $c$, the integrand is bounded by $t^{-(p\sigma+1-q\alpha-\ep)-1}$. Thus, the integral is finite. We hence obtain that the sum is bounded for $p(\sigma+1-q\alpha)>1$ and has $\log$-asymptotics if $p(\sigma+1-q\alpha)=1$. In either case, the product with $\frac1{\log^2 n}$ forms a null-sequence and $T_{1,1}$ tends to zero. 
		
		We now turn to $T_{1,2}$. We use the mean value theorem to estimate 
			\[ \left(\frac1y\right)^{1-q\alpha} - \left(\frac1x\right)^{1-q\alpha} 
				\lesssim x^{q\alpha}\left(\frac1y - \frac1x\right) 
				= x^{-1+q\alpha} \left(\frac xy -1\right). \] 
		Together with the fact that $e_n\leq1$, we have 
		\begin{align*}
			T_{1,2} &\lesssim \sum_{\substack{1<y<x<n\\x<cy}} y^{1-p(\sigma+1-q\alpha)} 
						\left(\frac xy\right)^{-(p-1)(1-q\alpha)} 
						\left(\frac xy -1\right)^{p(1-\sigma)-3} \log^2 \frac xy \\ 
					&\lesssim \frac1{\log^2 n} \sum_{1<y<n} y^{-p(\sigma+1-q\alpha)} 
						\int_1^c t^{-(p-1)(1-q\alpha)} (t-1)^{p(1-\sigma)-3} 
						\log^2 t \,dt \\ 
					&\lesssim \frac1{\log^2 n} \sum_{1<y<n} y^{-p(\sigma+1-q\alpha)} 
						\int_1^c (t-1)^{p(1-\sigma)-1} dt. 
		\end{align*}
		The integral is again finite since $p(1-\sigma)>0$ and we see that $T_{1,2}$ tends to zero. 
		
		Next, we consider $T_2$. Since $x>n$, we have 
			\[ T_2 \lesssim \sum_{1<y<n<x} (x-y)^{-1-p\sigma} (xy)^{-1+q\alpha} 
				\left(y^{-1+q\alpha} - x^{-1+q\alpha}\right)^{p-2} e_n(y)^p. \] 
		As we have done before, we will split the sum into two terms, $T_{2,1}$ and $T_{2,2}$, this time depending on whether $x>2y$ or $x<2y$. For $T_{2,1}$, we have 
		\begin{align*}
			T_{2,1} &\lesssim \sum_{\substack{1<y<n<x\\x>2y}} (x-y)^{-1-p\sigma} 
						(xy)^{-1+q\alpha} y^{-(p-2)(1-q\alpha)} e_n(y)^p \\ 
					&\lesssim \frac1{\log^p n} \sum_{1<y<n} y^{-p(\sigma+1-q\alpha)} 
						\log^p \frac ny \int_{\frac ny\vee2}^\infty t^{-(1-q\alpha)} 
						(t-1)^{-1-p\sigma} dt 
		\intertext{where we substituted $t=\frac xy$. For $y<\frac n2$, we bound the integral by $(\frac ny -1)^{-p(\sigma+1-q\alpha)}$ and otherwise by a constant. We get} 
				\dots&\lesssim \frac1{\log^p n} 
					\left[
						\sum_{1<y<\frac n2} (n-y)^{-p(\sigma+1-q\alpha)} \log^p \frac ny 
					\right. \\ 
				&\qquad\qquad\qquad\qquad+ 
					\left.
						\sum_{\frac n2<y<n} y^{-p(\sigma+1-q\alpha)} \log^p \frac ny
					\right] \\ 
				&\lesssim\frac{n^{-p(\sigma+1-q\alpha)}}{\log^p n} 
					\left[
						\int_1^{\frac n2} \left(1-\frac yn\right)^{-p(\sigma+1-q\alpha)} 
						\log^p \frac ny \,dy 
					\right. \\ 
				&\qquad\qquad\qquad\qquad+ 
					\left.
						\int_{\frac n2}^n \left(\frac yn\right)^{-p(\sigma+1-q\alpha)} 
						\log^p \frac ny \,dy 
					\right]. 
		\intertext{We make the substitution $e^{-u} = \frac yn$ and obtain} 
				\dots&= \frac{n^{1-p(\sigma+1-q\alpha)}}{\log^p n} 
					\left[
						\int_{\log 2}^{\log n} e^{-u} (1-e^{-u})^{-p(\sigma+1-q\alpha)} 
						u^p du 
					\right. \\ 
				&\qquad\qquad\qquad\qquad+ 
					\left.
						\int_0^{\log 2} e^{(p(\sigma+1-q\alpha)-1)u} u^p du 
					\right] \\ 
				&\lesssim \frac{n^{1-p(\sigma+1-q\alpha)}}{\log^p n} 
					\left[
						\int_{\log 2}^{\log n} e^{-u} u^p du 
					+
						\int_0^{\log 2} u^p du 
					\right]. 
		\end{align*}
		Since both integrals are finite, we obtain that $T_{2,1}$ tends to zero. 
		
		We are now left with $T_{2,2}$. Since $x<2y$, we again use the mean value theorem yielding 
		\begin{align*}
			T_{2,2} &\lesssim \sum_{1<y<n<x<2y} (x-y)^{-1-p\sigma} x^{-(p-1)(1-q\alpha)} 
						y^{-(1-q\alpha)} \left(\frac xy -1\right)^{p-2} e_n(y)^p \\ 
					&\lesssim\frac1{\log^p n} \sum_{\frac n2<y<n} 
						y^{-p(\sigma+1-q\alpha)} 
						\log^p \frac ny \int_{\frac ny}^2 t^{-(p-1)(1-q\alpha)} 
						(t-1)^{p(1-\sigma)-3} dt \\ 
					&\lesssim \frac{n^{-p(\sigma+1-q\alpha)}}{\log^p n} 
						\int_{\frac n2}^n \left(\frac yn\right)^{-p(\sigma+1-q\alpha)} 
						\left(\frac ny -1\right)^{p(1-\sigma)-2} 
						\log^p \frac ny \,dy. 
		\intertext{We make the substitution $e^{-u} = \frac yn$ and obtain} 
				\dots&= \frac{n^{1-p(\sigma+1-q\alpha)}}{\log^p n} 
						\int_0^{\log 2} e^{(p(1-q\alpha)+p-3)u} 
						(1-e^{-u})^{p(1-\sigma)-2} u^p du \\ 
					&\lesssim \frac{n^{1-p(\sigma+1-q\alpha)}}{\log^p n} 
						\int_0^{\log 2} u^{p(1-\sigma)+p-2} du. 
		\end{align*}
		The integral is finite. Hence $T_{2,2}$ also tends to zero, completing the proof. 
	\end{proof}
	
\subsection{Proof of Null-criticality}
Recall that $p,q \in (1,\infty)$, $\sigma\in (0,\frac1p)$, as well as $\alpha\in(0,\frac1q)$. In this subsection, we assume also
\[ p(\sigma+1-q\alpha)\leq1\quad \text{, i.e.,}\quad  \alpha\geq\frac{p-1+p\sigma}{pq}. \]

In contrast to the proof of the linear case in \cite{KN} (and the absence of the spectral theorem), the asymptotic behaviour of $w$ is not straightforward. 

\begin{proof}[Proof of Theorem~\ref{thm:main}~\ref{thm:main_b}]
	We have to show that $\kappa_{q,-\alpha}\notin \ell^p(\ZZ,\abs{w})$, where $w=\Delta_p^\sigma \kappa_{q,-\alpha}/ \kappa^{p-1}_{q,-\alpha}$. 	
	
	We have the following asymptotic behaviour:
	\begin{align*}
		w(x)\asymp \abs{x}^{-p\sigma},\quad  \text{ and } \quad  \kappa_{q,-\alpha}(x)\asymp \abs{x}^{-1+q\alpha}, \quad x\to \infty.
	\end{align*}
	Note that the large-scale behaviour of $\kappa_{q,-\alpha}$ follows from Equation~\eqref{eq:kappaAsymp}, whereas that of $w$ is shown in Proposition~\ref{prop:optimal constant}. 
	
	Hence, outside of a compact set (where $w$ is positive in any case by Proposition~\ref{prop:superharmonic}), the relevant series is, up to multiplicative constants, an infinite sum over $\abs{x}^{-p\sigma+ p(-1+q\alpha)}$, $x\in \ZZ$, which is not finite if and only if $p(\sigma+1-q\alpha)\leq1$.
\end{proof}

\section{On the Optimality of the Constant} \label{s:constant}

In this section, we compute the constant at infinity. More precisely, we have the following proposition. 

\begin{proposition}[Hardy constant at infinity] \label{prop:optimal constant}
	Let $p,q>1$, $\sigma\in(0,\frac1p)$ and $\alpha\in(0\vee\frac{p-2}{(p-1)q},\frac1q)$. The constant of the Hardy-type weight 
		\[ w = \frac{\Delta_p^\sigma \kappa_{q,-\alpha}}{\kappa_{q,-\alpha}^{p-1}} \] 
	at infinity is given by 
		\[ C_{\infty}:=c_{p,\sigma} \int_0^\infty t^{-1-p\sigma} \left[
				\left(1-(1+t)^{-1+q\alpha}\right)^{p-1} 
				+ \left(1-|1-t|^{-1+q\alpha}\right)^{\langle p-1\rangle} \right]dt, \]
				i.e.,
				 \[ w(x)= {C_{\infty}}{x^{-p\sigma}}+ O(x^{-1-p\sigma}), \qquad \abs{x}\to \infty.\]
\end{proposition}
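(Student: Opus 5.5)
Throughout, write $u=\kappa_{q,-\alpha}$ and take $x>0$ large; the case $x<0$ follows by symmetry. The plan is to express $\Delta_p^\sigma u(x)$ as a Riemann sum at scale $x$, isolate the leading order $x^{-p\sigma}u(x)^{p-1}$, and control the remainder. By \eqref{eq:kappaAsymp} we have
\[ \kappa_{p,\sigma}(z)=c_{p,\sigma}\abs{z}^{-1-p\sigma}\bigl(1+O(\abs{z}^{-1})\bigr), \qquad u(y)=c_{q,-\alpha}\abs{y}^{-1+q\alpha}\bigl(1+O(\abs{y}^{-1})\bigr). \]
Substituting $y=x\pm k$ we write
\[ \Delta_p^\sigma u(x)=\sum_{k\ge1}\kappa_{p,\sigma}(k)\Bigl[\bigl(u(x)-u(x+k)\bigr)^{p-1}+\bigl(u(x)-u(x-k)\bigr)^{\langle p-1\rangle}\Bigr]. \]
Dividing by $u(x)^{p-1}$ and setting $t=k/x$ gives $w(x)=\sum_{k\ge1}\kappa_{p,\sigma}(k)\,F_x(k/x)$. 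Here $F_x(t)=\bigl(1-\tfrac{u(x+tx)}{u(x)}\bigr)^{p-1}+\bigl(1-\tfrac{u(x-tx)}{u(x)}\bigr)^{\langle p-1\rangle}$, and the asymptotics of $u$ show that it approximates
\[ F(t)=\bigl(1-(1+t)^{-1+q\alpha}\bigr)^{p-1}+\bigl(1-\abs{1-t}^{-1+q\alpha}\bigr)^{\langle p-1\rangle}. \]
Since $\kappa_{p,\sigma}(k)\approx c_{p,\sigma}x^{-1-p\sigma}t^{-1-p\sigma}$, the sum becomes $x^{-p\sigma}$ times a Riemann sum with mesh $1/x$ for $c_{p,\sigma}\int_0^\infty t^{-1-p\sigma}F(t)\,dt=C_\infty$.

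First I check that this integral converges. Near $t=0$, $F(t)\sim ((1-q\alpha)t)^{p-1}-((1-q\alpha)t)^{p-1}$ cancels at leading order. More importantly, each term is $O(t^{p-1})$ and $t^{-1-p\sigma+p-1}$ is integrable because $p(1-\sigma)>1$. The cancellation further improves this to $O(t^{p})$ whenever $p\ge 2$, and it matters for the error estimate. At $t\to\infty$, $F$ is bounded and $p\sigma>0$. At $t=1$, the singularity is $\abs{1-t}^{(-1+q\alpha)(p-1)}$, which is integrable iff $(p-1)(1-q\alpha)<1$, i.e.\ $\alpha>\frac{p-2}{(p-1)q}$. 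This is exactly the hypothesis, and it also corresponds to $u(0)$ entering only via the term $y=0$ ($k=x$), which contributes $\kappa_{p,\sigma}(x)u(0)^{p-1}u(x)^{1-p}\asymp x^{-1-p\sigma}x^{(p-1)(1-q\alpha)}=o(x^{-p\sigma})$ by the same condition.

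Next I split the $k$-sum into four regions: $k\le \delta x$ (small increments), $\delta x<k<x-\delta x$ together with $x+\delta x<k$ (the bulk), $\abs{k-x}\le\delta x$ (near the origin $y\approx 0$), and the tail. In the bulk, the relative errors $O(1/k)$ and $O(1/y)$ and the Riemann-sum error for a function that is smooth on compact subsets away from $t\in\{0,1\}$ contribute $O(x^{-1-p\sigma})$. In the small-$k$ region, I use the mean value theorem on the exact Gamma-ratio form of $u$ (Lemma~\ref{lem:Gamma}) to get $\abs{u(x)-u(x\pm k)}\lesssim k\,x^{-2+q\alpha}$. The difference between the two signed terms is of order $k^{p-1}x^{-(p-1)}\cdot k/x$, which can then be summed against $k^{-1-p\sigma}$. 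Near $y=0$, the contributions from the sum and from the integral are each bounded separately, using integrability of $\abs{1-t}^{(q\alpha-1)(p-1)}$.

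The main obstacle is obtaining the sharp error $O(x^{-1-p\sigma})$ rather than merely $o(x^{-p\sigma})$, especially near $t=0$ and $t=1$ where $F$ is not Lipschitz. For $t\to0$, I would expand both increments to second order so that the odd parts cancel and the remainder is summable. For $t\to1$, the error terms are relative $O(1/\abs{y})$ corrections and therefore sum to $x^{-p\sigma}\cdot O(x^{-1}\log x)$ or better. If that logarithm cannot be removed, I would settle for the statement with that slightly weaker remainder, which still suffices for Theorem~\ref{thm:main}\ref{thm:main_b}.
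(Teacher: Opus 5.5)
Your qualitative argument is the paper's argument. The paper writes $x^{p\sigma}w(x)=\int_0^\infty g_x(t)\,dt$ with a step function $g_x$ that is constant on the cells $[y/x,(y+1)/x)$. It shows $g_x\to g$ locally uniformly on $(0,\infty)\setminus\{1\}$, and that the mass of $g_x$ near $0$, near $1$ and near $\infty$ is uniformly small. Your four-region split is the same argument and yields $x^{p\sigma}w(x)\to C_\infty$.

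There is one slip in that part. The inequality $p(1-\sigma)>1$ fails when $1<p<2$ and $1-\frac1p<\sigma<\frac1p$. In that range the crude bound $O(t^{p-1})$ does not make $t^{-1-p\sigma}F(t)$ integrable at $0$. What you need is $F(t)=O(t^p)$, and this holds for \emph{every} $p>1$, not only for $p\ge2$. The two bases have absolute values $\asymp t$ that differ by $O(t^2)$, so the mean value theorem gives $O(t^{p-2}\cdot t^2)$ for any $p>1$. The paper phrases this via the oddness of $(\cdot)^{\langle p-1\rangle}$, and your discrete small-$k$ estimate already uses it.

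The genuine gap is the quantitative remainder. Near $t=1$ your bookkeeping ignores the size of the summands themselves. For $1\le\abs{y}\ll x$ the summand $(u(x)-u(y))^{\langle p-1\rangle}/u(x)^{p-1}$ is approximately $-(u(y)/u(x))^{p-1}$, of size $x^{\gamma}\abs{y}^{-\gamma}$, where $\gamma:=(p-1)(1-q\alpha)\in(0,1)$ by the hypothesis on $\alpha$. A relative correction $O(1/\abs{y})$ in $u(y)$ therefore changes the summand by a term of order $\kappa_{p,\sigma}(x)\,x^{\gamma}\abs{y}^{-\gamma-1}$. Summing over $y$ gives $x^{-1-p\sigma+\gamma}$, i.e.\ a relative error $x^{-(1-\gamma)}$, not $x^{-1}\log x$. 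Your own computation of the $y=0$ summand already exhibits a single term of this size. Comparing $\sum_{1\le\abs{y}\le\delta x}\abs{y}^{-\gamma}$ with the corresponding integral also leaves a $\zeta(\gamma)$-type constant at the same order.

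Reaching $O(x^{-1-p\sigma})$, or even $O(x^{-1-p\sigma}\log x)$, would require these constants to cancel exactly; they are built from $u(0)$, the exact values of $u$ near $0$, and $\zeta(\gamma)$. For $p=2$ the cancellation is forced by $\Delta_2^\sigma\kappa_{2,-\alpha}=\kappa_{2,\sigma-\alpha}$. Nothing in your argument, or in the nonlinear setting, provides it. For instance, for $p=3$ the coefficient of $x^{-p\sigma-(1-\gamma)}$ in $w(x)$ is a nonzero multiple of the constant term $R$ in $\sum_{\abs{y}\le N}\kappa_{q,-\alpha}(y)^2=CN^{2q\alpha-1}+R+o(1)$. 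For $q\alpha=0.9$ one finds numerically $R\approx-0.007\,c_{q,-\alpha}^2\neq0$.

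Likewise, near $t=0$ the constant term of $\sum_{k\le N}\kappa_{p,\sigma}(k)k^p$ produces a discrepancy of order $x^{-p}$. This exceeds $x^{-1-p\sigma}$ when $p(1-\sigma)<1$. So a careful version of your splitting gives a remainder $O(x^{-p\sigma-\theta})$ with $\theta=\min\{1-\gamma,\,p(1-\sigma)\}<1$, and in general no better.

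The paper's proof likewise establishes only $x^{p\sigma}w(x)\to C_\infty$, since an $L^1$-convergence argument carries no rate. This limit is all that Theorem~\ref{thm:main}\ref{thm:main_b} and Theorem~\ref{thm:strictly positive weight} use. That is the statement you should prove, without the claimed rate.
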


\begin{proof}
	Let $u=\kappa_{q,-\alpha}$. We have 
	\begin{align*}
		w(x) 
			&= \frac{\Delta_p^\sigma u(x)}{u(x)^{p-1}} \\ 
			&= \frac1{x^{p\sigma}} 
				\sum_{y\geq1} x^{p\sigma} \kappa_{p,\sigma}(y) \left[
				\left(1-\frac{u(x+y)}{u(x)}\right)^{p-1} 
				+ \left(1-\frac{u(x-y)}{u(x)}\right)^{\langle p-1\rangle}\right]. 
	\end{align*}
	We define the step function $g_x$ on $(0,\infty)$ as follows. On each interval 
		\[ I_y := \left[\frac yx,\frac{y+1}x\right), \]
	set 
		\[ g_x(t) 
			:= x^{1+p\sigma} \kappa_{p,\sigma}(y) \left[
				\left(1-\frac{u(x+y)}{u(x)}\right)^{p-1} 
				+ \left(1-\frac{u(x-y)}{u(x)}\right)^{\langle p-1\rangle}
				\right]. \]	
	Then, by construction the sum above is equal to $\int_0^\infty g_x(t)dt$. It thus suffices to identify the $L^1$-limit of $g_x$. Fix a compact set contained in $(0,\infty)\setminus\{1\}$. If $t\in I_y$, then $y/x\to t$ as $x\to \infty$ uniformly on such a compact set. Hence, by Equation~(\ref{eq:kappaAsymp}), 
		\[ x^{1+p\sigma} \kappa_{p,\sigma}(y) \to c_{p,\sigma} t^{-1-p\sigma} \]
	uniformly there. Similarly, 
		\[ \frac{u(x+y)}{u(x)} \to (1+t)^{-1+q\alpha}, \] 
	and, using the evenness of $u$, 
		\[ \frac{u(x-y)}{u(x)} \to |1-t|^{-1+q\alpha}. \] 
	Therefore, $g_x$ converges locally uniformly on $(0,\infty)\setminus\{1\}$ to 
		\[ g(t) := c_{p,\sigma} t^{-1-p\sigma} \left[
				\left(1-(1+t)^{-1+q\alpha}\right)^{p-1} 
				+ \left(1-|1-t|^{-1+q\alpha}\right)^{\langle p-1\rangle} \right]. \] 
	It remains to check that no mass is lost near $0$, near $1$, or at infinity. 
	
	Near $0$, observe that by Equation~(\ref{eq:kappaAsymp}), 
		\[ 1-\frac{u(x+y)}{u(x)} = (1-q\alpha)\frac yx + O\left(\frac{y^2}{x^2}\right) \] 
	and similarly 
		\[ 1-\frac{u(x-y)}{u(x)} = -(1-q\alpha)\frac yx + O\left(\frac{y^2}{x^2}\right). \]
	Now, by oddness of the French power $(\cdot)^{\langle p-1\rangle}$, the leading-order terms cancel and we have 
		\[ \left(1-\frac{u(x+y)}{u(x)}\right)^{p-1} 
			+ \left(1-\frac{u(x-y)}{u(x)}\right)^{\langle p-1\rangle} 
			= 	O\left(\left(\frac yx\right)^p\right). \] 
	It follows that, uniformly in $x$, 
		\[ \int_0^\delta |g_x(t)|dt \lesssim \delta^{p-p\sigma}, \] 
	which tends to zero as $\delta\to0$. 
	
	Near $t=1$, the only potentially singular term is the one involving $u(x-y)$. Due to Equation~(\ref{eq:kappaAsymp}), 
		\[ \frac{u(x-y)}{u(x)} 
			\lesssim \left(\frac{1+|x-y|}x\right)^{-1+q\alpha}. \] 
	Then, uniformly in $x$, we have that 
		\[ \int_{1-\delta}^{1+\delta} |g_x(t)|dt 
			\lesssim \int_{1-\delta}^{1+\delta} \left(1+|1-t|^{(-1+q\alpha)(p-1)}\right)dt 
			\to 0 \] 
	as $\delta\to0$ if $(-1+q\alpha)(p-1)>-1$, i.e., $\alpha>\frac{p-2}{(p-1)q}$. 
	
	Finally, for large $t$, the terms involving $u$ remain bounded, while the kernel $\kappa_{p,\sigma}$ provides the decay $t^{-1-p\sigma}$. Hence, 
		\[ \sup_x \int_M^\infty |g_x(t)| dt \lesssim M^{-p\sigma}, \] 
	which tends to zero as $M\to\infty$. 
	
	We have therefore shown that $g_x$ converges to $g$ uniformly on compact sets away from $0$ and $1$, and that the $L^1$-mass of $g_x$ near $0$, near $1$ and at infinity can be made uniformly arbitrarily small. Hence, $g_x$ converges to $g$ in $L^1$ and the claim follows. 
\end{proof}

Introducing a cut-off $\ep>0$ at the origin, splitting the defining integral for $C_\infty$ at 1, and applying the changes of variables $y=1\pm t$ and $y=t-1$, followed by the inversion $y\mapsto y^{-1}$ on the resulting integrals over $(1,\infty)$, we obtain, after recombining the terms and letting $\ep\to0$, 
	\[ C_\infty = c_{p,\sigma} I, \] 
where $I$ is the integral appearing in the proof of Proposition~\ref{prop:superharmonic}. 

\section{A Strictly Positive Hardy Weight}\label{s:positive weight}

In this section, we prove Theorem~\ref{thm:strictly positive weight}. We will show that $u=\kappa_{q,-\alpha}\wedge\ep$ is superharmonic for an appropriate choice of $\ep$ and deduce that $\frac{\Delta_p^\sigma u}{u^{p-1}}$ is an optimal Hardy weight for~$\Delta_p^\sigma$. 

\begin{proof}[Proof of Theorem~\ref{thm:strictly positive weight}]
	By Proposition~\ref{prop:superharmonic}, there exists a finite subset $K$ of $\ZZ$ such that $\kappa_{q,-\alpha}$ is superharmonic outside of $K$. We may assume $K$ to be symmetric. Now, by the monotonicity shown in Lemma~\ref{lem:Gamma}, there exists $\ep>0$ such that $\kappa_{q,-\alpha}<\ep$ outside of $K$ and $\kappa_{q,-\alpha}>\ep$ inside of $K$. We then define $u:=\kappa_{q,-\alpha}\wedge\ep$ and split its Laplacian as follows 
		\[ \Delta_p^\sigma u(x) 
			= \left(\sum_{y\in K} + \sum_{y\notin K}\right) \kappa_{p,\sigma}(x-y) (u(x)-u(y))^{\langle p-1\rangle}. \]
	Observe that $u$ is constant on $K$ and strictly smaller outside where it coincides with $\kappa_{q,-\alpha}$. Thus, for $x\in K$, the first sum vanishes while the second sum consists of strictly positive terms. For $x\notin K$, the second sum coincides with the respective sum for $\kappa_{q,-\alpha}$ while the first sum is strictly larger than its respective sum for $\kappa_{q,-\alpha}$. Since $\kappa_{q,-\alpha}$ is superharmonic on $\ZZ\setminus K$ by Proposition~\ref{prop:superharmonic}, we find that $\Delta_p^\sigma u(x) > 0$. Hence $w:=\Delta_p^\sigma u/u^{p-1}$ is strictly positive everywhere. By Theorem~\ref{thm:GSR}, $w$ is a Hardy weight. Furthermore, observe that since $u\leq\kappa_{q,-\alpha}$, we have $\E_{p,u,1}^\sigma\leq\E_{p,\kappa_{q,-\alpha},1}^\sigma$ and $\E_{p,u,2}^\sigma\leq\E_{p,\kappa_{q,-\alpha},2}^\sigma$. Hence, combining Theorem~\ref{thm:main}, Corollary~\ref{cor:GSR} and Corollary~\ref{cor:null}, $w$ is critical. Finiteness of $K$ implies that $w$ has the same asymptotics as the weight in Theorem~\ref{thm:main}. Then in particular $u\notin\ell^p(\ZZ,w)$. This shows that $w$ is an optimal Hardy weight. 
\end{proof}
	
\appendix
\section{$p$-Null-Criticality implies $p$-Optimality at Infinity}
	For general locally summable graphs, it seems to be folklore in the case $p=2$ that $p$-null-criticality implies $p$-optimality at infinity, and recently it has been proved explicitly in \cite[Lemma~2.12]{Hake} and \cite[Proposition~15]{HKPLattice}. Both proofs follow the argument from \cite[Lemma~6]{KN} on $\ZZ$. 
	
	Here we want to show the quasi-linear analogue of the results in \cite{Hake,HKPLattice, KN} on locally summable graphs. Note that it also generalizes \cite[Theorem~2.6]{F} since we do not assume that the Agmon ground state is of bounded oscillation. This distinction is crucial since the Agmon ground state which we constructed for the fractional $p$-Laplacian is not of bounded oscillation. The main idea is to make use of the non-linear ground state representation formula once more. 
	
	We say that the Hardy weight $w$ is \emph{$p$-optimal at infinity} if 
	\[ \E_p(\phi)\geq (1+\lambda)w_p(\phi), \qquad \phi \in C_c(X\setminus K), \]
	for some finite $K\sse X$ implies $\lambda \leq 0$. This implies that the constant appearing in the Hardy-type weight is the best possible.
	
	\begin{theorem}
		Let $b$ be a graph over $(X,m)$ and $p\in (1,\infty)$. Assume that $w=\Delta_pu/u^{p-1}$ is a $p$-critical Hardy weight with Agmon ground state $u$. If for some finite $K\sse X$ and $\tilde{w} \geq 0$, we have 
		\[ \E_p(\phi)\geq (w+\tilde{w})_p(\phi), \qquad \phi \in C_c(X\setminus K), \]
		then $u\in \ell^p(X,\tilde{w}m)$.
		
		In particular, if $w$ is $p$-null-critical then it is $p$-optimal at infinity.
	\end{theorem}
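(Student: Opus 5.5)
We will test the assumed inequality against a null-sequence of the form $u e_n$ and use the ground state representation to control the energy. By Lemma~\ref{lem:null-sequence} (criticality of $w$ with Agmon ground state $u$), together with Corollary~\ref{cor:null} and Theorem~\ref{thm:GSR}, we pick $(e_n)$ in $C_c(X)$ with $0\le e_n\le 1$ (after truncation), $e_n\to 1$ pointwise, and $\E_{p,u}(e_n)\to 0$. The last property follows from \eqref{eq:GSRI}, since $(\E_p-w_p)(ue_n)\to 0$ for a null-sequence normalised suitably; if needed we replace $e_n$ by $(e_n\wedge 1)\vee 0$, which is allowed because the simplified energy is monotone under normal contractions up to constants. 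The test functions in the hypothesis must vanish on $K$, so we fix a finitely supported $\chi$ with $\chi=1$ on $K$, $0\le\chi\le1$, and put $\phi_n:=u e_n(1-\chi)\in C_c(X\setminus K)$. Setting $\psi_n:=e_n(1-\chi)$, we have $\phi_n=u\psi_n$.

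Combining the hypothesis with Theorem~\ref{thm:GSR} applied to $\psi_n$ gives
\[ \tilde w_p(u\psi_n)\le \E_p(u\psi_n)-w_p(u\psi_n)\asymp \E_{p,u}(\psi_n). \]
It remains to show that $\sup_n\E_{p,u}(\psi_n)<\infty$. Once this holds, Fatou's lemma gives $\tilde w_p(u(1-\chi))<\infty$, and since $K$ is finite and $u$ is finite on $K$ (which has finite measure) we get $u\in\ell^p(X,\tilde w m)$.

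For the uniform bound, we split $\nabla_{x,y}\psi_n=(1-\chi(x))\nabla_{x,y}e_n-e_n(y)\nabla_{x,y}\chi$ and use the quasi-triangle inequality for $\E_{p,u}$. This gives $\E_{p,u}(\psi_n)\lesssim \E_{p,u}(e_n)+R_n$, where $R_n$ collects the terms involving $\nabla\chi$. The expression for $\E_{p,u}$ has the structure $(\nabla\phi)^2(\ldots)^{p-2}$ with $\abs{\phi}$ inside, so it is not a seminorm. We therefore work with the comparable quantities of Corollary~\ref{cor:GSR}: for $p\ge2$ we use $\E_{p,u,1}+\E_{p,u,2}$, each of which is controlled under the product splitting with $\abs{\psi_n}\le1$. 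For $p<2$ we use the explicit form directly, with $(a+b)^{p-2}\le a^{p-2}$.

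The term $R_n$ only involves edges with at least one endpoint in $\supp\chi$, which is finite, and it is bounded by
\[ C\sum_{x\in\supp\chi}\sum_{y} b(x,y)\Bigl(u(x)u(y)+u(x)\abs{\nabla_{x,y}u}^{p-1}/u(x)^{p-2}\cdots\Bigr) \]
and similar expressions. To see these are finite independently of $n$, we use $u\in\FF_p$, local summability $\deg(x)<\infty$, and the bound $\sum_y b(x,y)u(y)^{p-1}\lesssim \deg(x)u(x)^{p-1}+\sum_y b(x,y)\abs{\nabla_{x,y}u}^{p-1}<\infty$. This is the \textbf{main obstacle}. In the non-locally-finite setting, without bounded oscillation, the cross terms such as $\sum_y b(x,y)u(y)^{p/2}$ or $u(y)\abs{\nabla u}^{p-2}$ must be reduced to quantities finite by $u\in\FF_p$. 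We plan to do this via Hölder/Young in the form $u(x)u(y)\abs{\nabla u}^{p-2}\le u(x)(u(x)+\abs{\nabla u})\abs{\nabla u}^{p-2}$, together with $\abs{\nabla u}^{p-2}\le 1+\abs{\nabla u}^{p-1}$ when $p\ge2$. For $p<2$ we bound $(\ldots)^{p-2}$ by the $(u(x)u(y))^{1/2}\abs{\nabla\psi}$ part.

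Finally, suppose $w$ is $p$-null-critical and $\E_p\ge(1+\lambda)w_p$ on $C_c(X\setminus K)$ with $\lambda>0$. Applying the statement with $\tilde w=\lambda w$ gives $u\in\ell^p(X,wm)$, which contradicts null-criticality. Hence $\lambda\le0$, which is $p$-optimality at infinity.
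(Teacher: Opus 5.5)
Your outline matches the paper's: test the hypothesis with $u\psi_n$, use Theorem~\ref{thm:GSR} to get $\tilde w_p(u\psi_n)\lesssim\E_{p,u}(\psi_n)$, bound this uniformly in $n$, apply Fatou, and take $\tilde w=\lambda w$ for the last claim. For $p\ge2$ your route through $\E_{p,u,1}+\E_{p,u,2}$ closes. The factor $\bigl(\tfrac{\abs{\psi(x)}+\abs{\psi(y)}}{2}\bigr)^{p-2}$ only decreases when passing from $e_n$ to $\psi_n$. The remainders are $\sum_y b(x,y)(u(x)u(y))^{p/2}$ and $\sum_y b(x,y)u(x)u(y)\abs{\nabla_{x,y}u}^{p-2}$ over finitely many $x$, and these are finite because $p/2\le p-1$ and $0\le p-2\le p-1$.

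\textbf{The gap is the case $1<p<2$.} There you bound $(\cdots)^{p-2}$ by $\bigl((u(x)u(y))^{1/2}\abs{\nabla_{x,y}\psi_n}\bigr)^{p-2}$ on the edges meeting $\supp\chi$. This leaves $\sum_y b(x,y)(u(x)u(y))^{p/2}$, which does not follow from $u\in\FF_p$ and $\deg(x)<\infty$. Those only give $\sum_y b(x,y)u(y)^{p-1}<\infty$, and for $p<2$ one has $p/2>p-1$. On a non-locally finite graph nothing in the hypotheses bounds $u$ on the neighbours of $x$. Dropping the $\abs{\nabla u}$-term on the other edges would be no better: for $p<2$, Corollary~\ref{cor:GSR} only gives $\E_{p,u}\lesssim\E_{p,u,1}$, so $\E_{p,u,1}(e_n)$ is not known to be small.

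The missing step is to keep the $\abs{\nabla u}$-term and use homogeneity, which is what the paper does. Take $\chi=1_K$ and split the sum by edges instead of by a product rule.
\begin{itemize}
\item On edges with $x,y\notin K$, the summands of $\E_{p,u}(\psi_n)$ are exactly those of $\E_{p,u}(e_n)$.
\item On edges with $x\notin K$, $y\in K$, you have $\psi_n(y)=0$. So the summand equals $e_n(x)^p\,b(x,y)u(x)u(y)\bigl((u(x)u(y))^{1/2}+\tfrac12\abs{\nabla_{x,y}u}\bigr)^{p-2}$, i.e.\ $e_n(x)^p\le 1$ times the corresponding summand of $\E_{p,u}(1_K)$.
\end{itemize}
Hence $\E_{p,u}(\psi_n)\le\E_{p,u}(e_n)+\E_{p,u}(1_K)$ for every $p>1$, with no case distinction. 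The remainder is finite because $\E_{p,u}(1_K)\asymp\E_p(u1_K)-w_p(u1_K)$ by Theorem~\ref{thm:GSR}, and $u1_K\in C_c(X)$ has finite energy by local summability. The same homogeneity bound, via $\abs{\nabla_{x,y}\psi}\le\abs{\psi(x)}+\abs{\psi(y)}$ and $\abs{\nabla_{x,y}\psi}\le1$, would also repair your version with a general cutoff $\chi$.
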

	\begin{proof}
		Let $\eta\in C_c(X)$ be such that $0\leq \eta \leq 1$, and $K\sse X$ finite. Let $\phi=\eta1_{X\setminus K}$. Then by assumption and the general ground state representation formula from Section~\ref{s:GSR},
		\begin{align*}
		 \tilde{w}_p(u\phi) 
		 	\leq \E_p(u\phi)- w_p(u\phi) 
		 	\asymp \E_{p,u}(\phi).
		\end{align*}
		We now show that the latter is bounded by $C_p(\E_{p,u}(\eta) + \E_{p,u}(1_K))$. To that end, we write $\E_{p,u}(\phi)$ as 
		\begin{align*}
			\Bigg(\sum_{x,y\notin K}+&2\sum_{\substack{x\notin K\\y\in K}}\Bigg) 
			b(x,y) u(x)u(y)(\nabla_{x,y}\phi)^{2} \\
			&\cdot\left( \bigl(u(x)u(y)\bigr)^{1/2}\abs{\nabla_{x,y}\phi}+ \frac{\abs{\phi(x)}+ \abs{\phi(y)}}{2}\abs{\nabla_{x,y}u} \right)^{p-2},
		\end{align*}
		since $\phi=\eta1_{X\setminus K}$ vanishes on $K$. For the first sum, we have
		\begin{align*}
			&\sum_{x,y\notin K}b(x,y) u(x)u(y)(\nabla_{x,y}\eta)^{2} \\
			&\qquad \cdot\left( \bigl(u(x)u(y)\bigr)^{1/2}\abs{\nabla_{x,y}\eta}+ \frac{\abs{\eta(x)}+ \abs{\eta(y)}}{2}\abs{\nabla_{x,y}u} \right)^{p-2},
		\end{align*}
		which is bounded by $\E_{p,u}(\eta)$. In the second sum, the summands reduce to 
		\begin{align*}
			b(x,y)u(x)u(y)(\eta (x))^{2}\left( \bigl(u(x)u(y)\bigr)^{1/2}\abs{\eta (x)}+ \frac{\abs{\eta (x)}}{2}\abs{\nabla_{x,y}u} \right)^{p-2},
		\end{align*}
		which vanishes whenever $\eta (x)=0$. Thus, we can equivalently sum over $x\in\supp(\eta)\setminus K$ and $y\in K$. Here, since $\eta\in C_c(X)$, there exists a constant $c>0$ such that $1 \geq \eta(x) \geq c$ for all $x\in\supp(\eta)\setminus K$, i.e., $\eta \asymp 1_{X\setminus K}$ on $\supp(\eta)\setminus K$. Hence, using $\abs{\nabla_{x,y}1_{X\setminus K}}= \abs{\nabla_{x,y}1_{K}}$ for all $x,y\in X$, we get for all $x\in\supp(\eta)\setminus K$ and $y\in K$,
		\begin{multline*}
			(\eta (x))^{2}\left( \bigl(u(x)u(y)\bigr)^{1/2}\abs{\eta (x)}+ \frac{\abs{\eta (x)}}{2}\abs{\nabla_{x,y}u} \right)^{p-2} \\
			\asymp (\nabla_{x,y}1_{K})^{2}\left( \bigl(u(x)u(y)\bigr)^{1/2}\abs{\nabla_{x,y} 1_{ K}}+ \frac{\abs{1_{X\setminus K} (x)}}{2}\abs{\nabla_{x,y}u} \right)^{p-2}.
		\end{multline*}
		Replacing $|1_{X\setminus K}(x)|$ by $|1_K(x)|+|1_K(y)|$ and summing over $x,y\in X$, we obtain $\E_{p,u}(1_K)$ as an upper bound. 
		
		Now, we use the $p$-criticality of $\E_p$ to conclude the proof. By Lemma~\ref{lem:null-sequence} there is an increasing null-sequence $(e_n)$ in $C_c(X)$ such that $e_n \to u$ pointwise and $\E_p(e_n)-w_p(e_n)\to 0$. Set $\eta_n= e_n/u \in C_c(X)$. Then, $0\leq \eta_n \nearrow 1$. Thus, we can replace $\eta$ by $\eta_n$ in the preceding calculations and obtain
		\[\E_{p,u}(\eta_n)\asymp \E_p(e_n)-w_p(e_n)\to 0.\]
		Combining these estimates with Fatou's lemma and $\phi_n:= \eta_n 1_{X\setminus K}$,
		\begin{align*}
			\tilde{w}_p(u 1_{X\setminus K}) \leq \liminf_n\tilde{w}_p(u\phi_n)\leq C_p \liminf_n (\E_{p,u}(\eta_n) + \E_{p,u}(1_K)) < \infty.
		\end{align*} 
		Since $K$ is finite, we have $u\in \ell^p(X, \tilde{w}m)$.
	\end{proof}
	
	We remark that the proof remains valid if, instead of the $p$-Laplacian, we consider a $p$-Schrödinger operator provided the corresponding $p$-energy functional is non-negative.
	
	\section{The situation for the restricted fractional $p$-Laplacian}
	
	As explained in Remark~\ref{r:fracpLapl}, there are other fractional $p$-Laplacians, and motivated by the recent work \cite{Dyda} on Hardy weights with respect to the restricted fractional $p$-Laplacian, we show briefly how to obtain optimal weights in this setting. This $p$-Laplacian is associated with the graph $b_{p,\sigma}$ over $(\ZZ,1)$ with weights $b_{p,\sigma}(x,y):=\abs{x-y}^{-1-p\sigma}$, $x,y\in \ZZ$. To distinguish the spectral fractional $p$-Laplacian from the $p$-energy functional of the main part of this article, let us denote the corresponding $p$-Laplacian and $p$-energy functional by $\LL_p^\sigma$ and $\EE_p^\sigma$, respectively. In the linear case $p=2$, this operator can be associated with $\alpha$-stable Lévy processes, see e.g. \cite{BD, BDL}.
	
	We note that this operator, defined in terms of powers of the distance function, is much simpler to handle than the one from the main part which comes from certain ratios of $\Gamma$-functions.
	
	\begin{theorem}\label{thm:rfl}
	Let $p,q>1$, $\sigma\in (0,\frac1p)$ and $\alpha\in (0,\frac1q)$. Let $b_{p,\sigma}$ be a graph over $(\ZZ, 1)$ defined above. Define $u_0\in C(\ZZ)$ via $u_0(0)=0$ and $u_0(x)=\abs{x}^{-1+q\alpha}$ for $x\neq 0$, as well as 
	\[ w_0=\frac{\LL_p^\sigma u_0}{u_0^{p-1}}, \quad \text{on } \ZZ\setminus \set{0}. \]
	Define $u_\gamma\in C(\ZZ)$ via $u_\gamma(0)=\gamma$ with $\gamma \in [1,\infty)$ and $u_\gamma(x)=\abs{x}^{-1+q\alpha}$ for $x\neq 0$, as well as 
	\[ w_\gamma=\frac{\LL_p^\sigma u_\gamma}{u_\gamma^{p-1}} \quad \text{on } \ZZ. \]
	
	\begin{enumerate}[label = ({\alph*})]	
		\item\label{thm:rfl_a} If $p(\sigma+1-q\alpha)\geq1$, then $w_0$ and $w_\gamma$ are $p$-critical with respect to $\EE_p^\sigma$ on $C_c(\ZZ\setminus \set{0})$ and $C_c(\ZZ)$, resp.
		\item\label{thm:rft_b} If  $p(\sigma+1-q\alpha)\leq1$, then $u_\beta\notin \ell^p(\ZZ\setminus \set{0},\abs{w_\beta})$ for $\beta\in\set{0,\gamma}$.
	\end{enumerate}
	In particular, if $p(\sigma+1-q\alpha)=1$, then $w_0$ and $w_\gamma$ are optimal Hardy-type weights on $\ZZ\setminus \set{0}$ and $\ZZ$, resp.
	
	In the following assume that $\alpha>\frac{p-2+p\sigma}{(p-1)q}$.
	
	Then, $w_\beta>0$ outside of a compact set, and an asymptotic analysis yields $w_\beta(x)\gtrsim \abs{x}^{-p\sigma}$ as $\abs{x}\to \infty$ for $\beta\in\set{0,\gamma}$. 
	
	Furthermore, $\LL_p^\sigma u_0(0)<0$ and $\LL_p^\sigma u_0>0$ on $\ZZ\setminus \set{0}$, i.e., $w_0$ is a Hardy weight on $\ZZ\setminus \set{0}$.
	
	 We also have $w_{\gamma}(0)> 0$ and $w_\gamma(\pm 1)> 0$ if $\gamma\in \set{0}\cup [1,1+C^{\frac{1}{p-1}})$, where $C$ is a positive constant depending on all parameters explicitly given in the proof.
	\end{theorem}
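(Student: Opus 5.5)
The proof follows the scheme used for Theorem~\ref{thm:main}, with $\kappa_{p,\sigma}$ replaced by $b_{p,\sigma}(x,y)=\abs{x-y}^{-1-p\sigma}$ and $\kappa_{q,-\alpha}$ replaced by $u_\beta$. The key point is that $u_\beta(x)=\abs{x}^{-1+q\alpha}$ away from the origin. All asymptotic manipulations from Sections~\ref{s:optimal} and~\ref{s:constant} were carried out only up to multiplicative constants via \eqref{eq:kappaAsymp}, so they apply verbatim; in fact they become exact, since the kernels are pure powers.

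\textbf{Criticality (part \ref{thm:rfl_a}).} I apply Corollary~\ref{cor:null} on $X=\ZZ$ for $u_\gamma$, and on $X=\ZZ\setminus\set{0}$ for $u_0$, where the graph is restricted and $\LL_p^\sigma u_0$ includes the edge to the vertex $0$ as a potential term. The null-sequence is again $e_n(x)=(1-\log\abs{x}/\log n)_+$, with $e_n(0)=1$ in the $\gamma$-case. The estimates for $\E_{p,u,1}$ via $S_1,S_2$ and, for $p\ge2$, for $\E_{p,u,2}$ via $T_{1,1},T_{1,2},T_{2,1},T_{2,2}$ repeat the proof of Theorem~\ref{thm:main}~\ref{thm:main_a}. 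The only new terms are edges touching $0$ in the $\gamma$-case. These contribute
\[
\sum_x \abs{x}^{-1-p\sigma}\,\abs{e_n(x)-1}^p\cdot(\text{bounded}),
\]
which tends to $0$ by dominated convergence, since $\abs{x}^{-1-p\sigma}$ is summable. The Hardy-type inequality itself follows from the ground state representation, Theorem~\ref{thm:GSR}.

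\textbf{Sign and asymptotics, and null-criticality (part \ref{thm:rft_b}).} The proof of Proposition~\ref{prop:superharmonic} carries over word for word. It gives $w_\beta>0$ outside a compact set and $w_\beta(x)\gtrsim h(x)I\,u_\beta(x)^{1-p}\asymp\abs{x}^{-p\sigma}$ when $\alpha>\frac{p-2+p\sigma}{(p-1)q}$. The argument of Proposition~\ref{prop:optimal constant} then gives the matching upper bound, $w_\beta\asymp\abs{x}^{-p\sigma}$. Null-criticality follows as in the proof of Theorem~\ref{thm:main}~\ref{thm:main_b}: the sum $\sum\abs{x}^{-p\sigma-p(1-q\alpha)}$ diverges exactly when $p(\sigma+1-q\alpha)\le1$. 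Optimality then follows from the definitions together with the appendix theorem.

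\textbf{Local signs.} For $u_0$: $\LL_p^\sigma u_0(0)<0$ because every difference $u_0(0)-u_0(y)=-\abs{y}^{-1+q\alpha}$ is negative. For $x\ge1$ the full sum has to be shown positive, and this is the main obstacle. My plan is to compare pairs $y=x+k$ and $y=x-k$ for $0<k<x$ and to use convexity of $t\mapsto t^{-1+q\alpha}$. The difficulty is that $0<y<x$ only receives weight $k^{-1-p\sigma}$, whereas the tail $y>2x$ together with $y\le0$, where $u_0(y)\le u_0(x)$ or $u_0(0)=0$, supplies extra positive mass. I would try an integral comparison as in Proposition~\ref{prop:superharmonic}, reducing positivity to that of $I$, plus exact control of the discrete error at small $x$. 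This is uncertain and may require the hypothesis on $\alpha$.

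For $u_\gamma$ with $\gamma\ge1$: $\gamma$ is the maximum, so $w_\gamma(0)>0$. At $x=1$ I bound the only negative term, $(\gamma-1)^{p-1}$, against the positive contributions from $y\ge2$ and $y\le-1$. Concretely, I set
\[
C:=\sum_{y\ne0,1}\abs{1-y}^{-1-p\sigma}\bigl(1-\abs{y}^{-1+q\alpha}\bigr)^{p-1}>0,
\]
the case $y=-1$ giving a zero term. Then $w_\gamma(1)>0$ precisely when $(\gamma-1)^{p-1}<C$, that is, $\gamma<1+C^{1/(p-1)}$.
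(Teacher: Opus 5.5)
\textbf{Verdict: the proposal has a genuine gap.} The claim $\LL_p^\sigma u_0>0$ on $\ZZ\setminus\set{0}$ is not proved.

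The rest of your argument matches the paper: parts~(a) and~(b), the behaviour at infinity, $w_\gamma(0)>0$, and $w_\gamma(\pm1)$ for $\gamma\ge1$. At $x=1$ your $C$ is exactly the paper's series $T_\infty$, so your condition $(\gamma-1)^{p-1}<C$ is sharp. The paper instead replaces $T_\infty$ by an elementary closed-form lower bound. When you invoke Proposition~\ref{prop:superharmonic} for part~(b), note that $p(\sigma+1-q\alpha)\le1$ already implies $\alpha>\frac{p-2+p\sigma}{(p-1)q}$, because $\frac{p-1+p\sigma}{p}-\frac{p-2+p\sigma}{p-1}=\frac{1-p\sigma}{p(p-1)}>0$.

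The missing claim is part of the statement (it is what makes $w_0$ a Hardy weight), and you leave it explicitly open. Two observations would supply it.

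First, at $x=\pm1$, which the paper settles by direct computation, nothing needs to be estimated. Since $u_0(0)=0$ and $\abs{y}^{-1+q\alpha}\le1$, the value $u_0(\pm1)=1$ is the global maximum of $u_0$. Hence every summand of $\LL_p^\sigma u_0(1)$ is non-negative, and the one with $y=0$ equals $1$. In your notation, $\LL_p^\sigma u_0(1)=C+1$, because $(1-\gamma)^{\langle p-1\rangle}$ changes sign at $\gamma=0$. This is also the case $\gamma=0$ of the claim $w_\gamma(\pm1)>0$, which you do not address.

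Second, take $\abs{x}\ge2$. In Proposition~\ref{prop:superharmonic} the edge to the origin had to be subtracted and beaten by decay for large $\abs{x}$. For $u_0$ that edge instead enters with a plus sign, as $\abs{x}^{-1-p\sigma}u_0(x)^{p-1}$. The paper therefore reads off $\LL_p^\sigma u_0(x)\ge\abs{x}^{-p\sigma-(p-1)(1-q\alpha)}I+\abs{x}^{-1-p\sigma}u_0(x)^{p-1}>0$ from the proof of Proposition~\ref{prop:superharmonic}. Here $I>0$ holds exactly under the hypothesis on $\alpha$.

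Your own plan of pairing $y=x+k$ with $y=x-k$, for $0<k<x$, cannot produce positivity. By convexity of $t\mapsto t^{-1+q\alpha}$, each pair contributes $k^{-1-p\sigma}\bigl[(u_0(x)-u_0(x+k))^{p-1}-(u_0(x-k)-u_0(x))^{p-1}\bigr]\le0$. Everything must be recovered from the unpaired terms $y\ge2x$ and $y\le0$, and whether they win is precisely what the sign of $I$ encodes.

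Your worry about the discrete error for small $\abs{x}$ is reasonable. That is exactly where the lattice sum is replaced by the integral $I$, and turning this into an inequality valid for every $\abs{x}\ge2$ needs a genuine sum-versus-integral comparison. As written, however, the proposal gives no argument for it, so this part of the theorem remains unproved.
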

	\begin{proof}
		The proofs of statements \ref{thm:rfl_a} and \ref{thm:rft_b} follow by the same arguments as in Section~\ref{s:optimal}. The positivity of $w_\beta$, $\beta\in \set{0,\gamma}$, outside of a compact set can also be proved as in Proposition~\ref{prop:superharmonic}. In fact, this proof shows (adapted to the present setting) that we even have $w_0>0$ outside of $\set{0, \pm 1}$ (note that $\LL_p^\sigma u_0(x)\geq x^{-p\sigma +(p-1)(-1+q\alpha)}I + \kappa_{p,\sigma}(x)u_0^{p-1}(x) >0 $, where $I$ is defined as in the proof of Proposition~\ref{prop:superharmonic}). 
		
		A direct computation shows that $\LL_p^\sigma u_0(0)<0$, $\LL_p^\sigma u_0(\pm 1)>0$ and $w_\gamma(0) > 0$. 
		
		Consider $\LL_p^\sigma u_\gamma(1)=T_{\infty}-T_0$, where $T_0=(\gamma-1)^{p-1}$, and 
		\[ T_{\infty}=\sum_{y>1}\left(\frac{1}{(y-1)^{1+p\sigma}}+ \frac{1}{(y+1)^{1+p\sigma}}\right)\left(1- \frac{1}{y^{1-q\alpha}}\right)^{p-1}. \]
		Then,
		\begin{align*}
			T_\infty 
				&\geq \sum_{y>1}\left(1+ \frac{1}{(1+\frac1y)^{1+p\sigma}}\right)\left(y^{1-q\alpha}-1\right)^{p-1} y^{1-p\sigma-(p-1)(1-q\alpha)} \\ 
				&\geq \left(1+\left(\tfrac23\right)^{1+p\sigma}\right)\left(2^{1-q\alpha}-1\right)^{p-1} \sum_{y>1} y^{1-p\sigma-(p-1)(1-q\alpha)} \\ 
				&\geq \left(1+\left(\tfrac23\right)^{1+p\sigma}\right)\left(2^{1-q\alpha}-1\right)^{p-1} \frac{2^{-p\sigma-(p-1)(1-q\alpha)}}{p\sigma+(p-1)(1-q\alpha)}
				=:C.
		\end{align*}	
	Hence, $T_\infty\geq T_0$ if $C\geq (\gamma -1 )^{p-1}$, i.e., $\gamma \in [1,1+ C^{\frac{1}{p-1}}]$.
	\end{proof}
	
	Moreover, the corresponding constant at infinity can also be obtained as in Proposition~\ref{prop:optimal constant}, that is, for $\beta\in \set{0,\gamma}$
	\[ w_\beta(x)= \frac{C_{\infty}}{c_{p,\sigma}}x^{-p\sigma}+ O(x^{-1-p\sigma}), \qquad \abs{x}\to \infty.\]

	\section{Alternative proof of Proposition~\ref{prop:subharmonicat1}}
	
	We close this paper with the following observation. It gives alternative bounds on $p,q, \alpha$ and $\sigma$.

	\begin{proof}[Alternative proof of Proposition~\ref{prop:subharmonicat1}]
		Recall the following basic consequence of Gautschi's inequality: For all $y> 0$ and $s,t\in (0,1/2)$ with $y-t>0$ and $y-s > 0$, we have
		\begin{align}
			\frac{1}{({y}+t)^{1-2t}} &\leq\frac{\Gamma({y}+t)}{\Gamma({y}+1-t)}\leq \frac{1}{({y}-t)^{1-2t}}, \label{eq:Gautschi_t}\\
			\frac{1}{({y}+s)({y}-s)^{2s}} &\leq\frac{\Gamma({y}-s)}{\Gamma({y}+1+s)}\leq \frac{1}{({y}-s)({y}+s)^{2s}}.\label{eq:Gautschi_s}
		\end{align}
		
		It suffices to consider $x=1$. We use the same notation as in the proof of Lemma~\ref{lem:superharmonicat1_p}.
			
		We first estimate $S_{\infty}$ from above. We focus on the $t$-terms first. Using \eqref{eq:Gautschi_t},
		\begin{align*}
			\frac{\Gamma(1+t)}{\Gamma(2-t)}- \frac{\Gamma(y+t)}{\Gamma(y+1-t)} \leq  \frac{\Gamma(1+t)}{\Gamma(2-t)}\left( 1- \left(\frac{1-t}{y+t}\right)^{1-2t}\right).
		\end{align*}
		Consider the function $f\colon (0,\infty)\to (0,\infty), f(x)=x^{1-2t}$. Note that $r:=\frac{1-t}{y+t}<1$. By the mean value theorem,
		\begin{multline*}
			f(1)-f(r)\leq (1-2t)r^{-2t}(1-r)=(1-2t)\left(\frac{y+t}{1-t}\right)^{2t}\frac{y-(1-2t)}{y+t}\\
			\leq \frac{(1-2t)y^{2t}}{(1-t)^{2t}}\cdot \frac{1-\frac{1-2t}{y}}{(1+\frac{t}{y})^{1-2t}}\leq \frac{(1-2t)y^{2t}}{(1-t)^{2t}}.
		\end{multline*}
		
		In particular, we can factor out $\left((1-2t)\frac{\Gamma(1+t)}{\Gamma(2-t)}\right)^{p-1}$ and find conditions under which the remaining inequality is valid. To do so, let us consider the $s$-terms. By the monotonicity of the $\Gamma$-fractions and \eqref{eq:Gautschi_s}, we have 
		\begin{multline*}
			\frac{\Gamma(y-1-s)}{\Gamma(y+s)}+ \frac{\Gamma(y+1-s)}{\Gamma(y+2+s)} \leq 2\cdot \frac{\Gamma(y-1-s)}{\Gamma(y+s)} \\
			\leq 2\cdot \frac{1}{y-1-s}\cdot \frac{1}{(y-1+s)^{2s}}\leq \frac{2}{y^{1+2s}}\cdot \frac{1}{1-\frac{1+s}{2}}\cdot \frac{1}{(1-\frac{1-s}{2})^{2s}}\\
			= \frac{2^{2+2s}}{y^{1+2s}}\cdot \frac{1}{1-s}\cdot \frac{1}{(1+s)^{2s}}.
		\end{multline*}
		Moreover, since $t(p-1)<s$, we have
		\[\sum_{y=3}^\infty y^{-1-2s+2t(p-1)}<\int_2^\infty y^{-1-2s+2t(p-1)}dy = \frac{2^{-2s+2t(p-1)}}{2s-2t(p-1)}.\]
		We conclude
		\begin{multline*}
			\left((1-2t)\frac{\Gamma(1+t)}{\Gamma(2-t)}\right)^{1-p}\cdot S_{\infty}\\
			\leq \frac{1}{(1-s)(1+s)^{2s}} \frac{2^{2+2t(p-1)}}{(1-t)^{p-1}}\cdot \frac{1}{2s-2t(p-1)},
		\end{multline*}
		and by \eqref{eq:Gautschi_t},
		\begin{align*}
			\left((1-2t)\frac{\Gamma(1+t)}{\Gamma(2-t)}\right)^{1-p}\cdot S_0\geq \frac{1}{(1+s)(1-s)^{2s}}\cdot \frac{1}{t^{p-1}}.
		\end{align*}
		Thus, $S_0\geq S_2+S_{\infty}$ is satisfied if
		
		\begin{align*}
			\left(\frac{1+s}{1-s}\right)^{1-2s}\leq \frac{1}{t^{p-1}\left(4\cdot \left(\frac{4^{t}}{1-t}\right)^{p-1}\cdot \frac{1}{2s-2t(p-1)}+ \frac{2}{\left(2-t\right)^{p-1}}\right)}
		\end{align*}
		Since the right-hand side has a singularity at $t=0$, we can continue as in Lemma~\ref{lem:superharmonicat1_p}. 
	\end{proof}
	
	\section*{Acknowledgments} We thank Matthias Keller for helpful comments.
	\printbibliography
\end{document}